\newtheorem{theorem}{Theorem}[section]
\newtheorem{lemma}[theorem]{Lemma}
\newtheorem{corollary}[theorem]{Corollary}
\theoremstyle{definition}
\newtheorem{definition}[theorem]{Definition}
\newtheorem{example}[theorem]{Example}
\theoremstyle{remark}
\newtheorem{remark}[theorem]{Remark}
\numberwithin{equation}{section}
\numberwithin{theorem}{section}
\newcommand{\E}{\mathbb{E}}
\newcommand{\Xcal}{{\mathcal X}}
\newcommand{\Acal}{{\mathcal A}}
\newcommand{\Bcal}{{\mathcal B}}
\newcommand{\Mcal}{{\mathcal M}}
\newcommand{\s}{{\mathcal S}}
\newcommand{\Dcal}{{\mathcal D}}
\newcommand{\Scal}{{\mathcal S}}
\newcommand{\e}{{\varepsilon}}
\DeclareMathOperator{\supp}{supp}
\newcommand{\Fcal}{{\mathcal F}}
\renewcommand{\P}{{\mathbb P}}
\newcommand{\Pol}{\textup{Pol}}
\newcommand{\fdot}{\,\cdot\,}
\newcommand{\R}{\mathbb{R}}
\newcommand{\N}{\mathbb{N}}
\newcommand{\diag}{\operatorname{diag}}
\newcommand\mydots{\hbox to 1em{.\hss.\hss.}}
\title{Measure-valued affine and polynomial diffusions}
\author{Christa Cuchiero\thanks{Vienna University, Department of Statistics and Operations Research, Data Science @ Uni Vienna, Kolingasse 14-16, 1090 Wien, Austria, christa.cuchiero@univie.ac.at
} \quad Francesco Guida \thanks{University of Trento and University of Verona, Department of Mathematics, Via Sommarive 14, 38123 Povo, Italy, francesco.guida@unitn.it
} \quad  Luca di Persio \thanks{University of Verona, Department of Computer Science, Strada le Grazie 15, 37134 Verona, Italy, luca.dipersio@univr.it} \\ Sara Svaluto-Ferro \thanks{University of Verona, Department of Economics, Via Cantarane 24, 37129 Verona, Italy, sara.svalutoferro@univr.it \newline
The first and the fourth author gratefully acknowledge financial support 
through grant Y 1235 of the START-program.
}}
\date{}
\begin{document}
\maketitle

\begin{abstract}
We introduce a class of measure-valued processes, which -- in analogy to their finite dimensional counterparts -- will be called measure-valued polynomial diffusions.  We show the so-called moment formula, i.e.~a representation of the conditional marginal moments via a system of finite dimensional linear PDEs. Furthermore, we characterize the corresponding infinitesimal generators and obtain a representation analogous to polynomial diffusions on $\mathbb{R}^m_+$,  in cases where their domain is large enough. In general the infinite dimensional setting allows for richer specifications strictly beyond this representation. As a special case we recover  measure-valued affine diffusions, sometimes also called Dawson-Watanabe superprocesses.\\
From a mathematical finance point of view the polynomial framework is especially attractive as it allows to transfer the most famous finite dimensional models, such as the Black-Scholes model, to an infinite dimensional measure-valued setting. 
We outline in particular the applicability of our approach for term structure modeling in energy markets.
\end{abstract}

\textbf{Keywords:} measure-valued processes;  polynomial and affine diffusions; Dawson-Watanabe type superprocesses;  martingale problem; maximum principle; HJM term structure modeling; energy markets

\textbf{AMS MSC 2020:} 60J68, 60G57
\tableofcontents

\section{Introduction}
We introduce a class of (non-negative) measure-valued processes, which we call in analogy to their finite dimensional counterparts \emph{measure-valued polynomial diffusions}. In spirit of \cite{CLS:19}, where the focus was on probability measures, we now transfer the defining property of finite dimensional polynomial processes considered e.g. in \cite{CKT:12, FL:16, LP:17, CLS:18} to the state space of (non-negative) measures.

This state space has high relevance for applications since stochastic modeling of infinite dimensional non-negative quantities occurs in many areas. This applies especially  to biology and population genetics where measure-valued processes have played a crucial role since the 1970's, when branching Brownian motion was introduced by Henry McKean, see \cite{MCKean:75}. Also the polynomial property occurs in this field often naturally, see,  e.g., \cite{BCGK:16}  for an example of a recently investigated two-dimensional polynomial process.

Similarly to finite dimensional polynomial processes on non-negative state spaces which have been applied in many areas of finance, like credit risk, stochastic volatility, or life insurance liability modeling (see, e.g., \cite{AF:20, AFP:18, AMLB:20, BZ:16}), measure-valued polynomial processes are also very well suited for modeling in mathematical finance. Indeed, infinite dimensional non-negative processes are of particular interest in view of term structures as we shall illustrate below by analyzing forward price models in energy markets. Another instance of measure-valued processes concerns Markovian lifts of stochastic Volterra processes (see, e.g., \cite{A:19, CT:19,CT:20}), which became recently very popular in view of rough volatility modeling initiated by \cite{GJR:18}. Stochastic portfolio theory, introduced in \cite{F:00} (see also \cite{FK:09}), is another area of applications where one deals with potentially high or infinite dimensional markets. Hence a measure-valued analog of the finite dimensional setup considered e.g.~in \cite{FK:05, C:19} is of interest to study asymptotic relative arbitrages. Measure-valued processes can also be used as state processes for stochastic optimal control problems with applications including model-independent derivatives pricing and two player games with asymmetric information (see, e.g., \cite{CKLS:21}). 

The advantage of using measure-valued processes instead of function-valued ones  is that many spatial stochastic processes do not fall
into the framework of stochastic partial differential equations (SPDEs).
Indeed, it is often easier to establish existence of a measure-valued process than of an analogous stochastic partial differential equation, say in some Hilbert space, which would for instance
correspond to its Lebesgue density, but which does not necessarily exist.
For an introduction, important concepts and results related to measure-valued processes we refer to \cite{E:00, D:93} and \cite{L:10}. For recent results in the probability measure valued case on generalized Wasserstein spaces see~\cite{LS:19}.

Beyond the general suitability of measure-valued processes for infinite dimensional dynamic modeling, the current polynomial framework allows for high tractability and the possibility to transfer many famous finite dimensional models, such as the Black-Scholes model or the Feller diffusion, to an infinite dimensional measure-valued setting. It offers
a unified treatment of a large class of processes that includes these well-known examples but also goes far beyond them as we shall illustrate in Theorem \ref{main1}. 

Let us now describe the precise setup in more detail. We denote by $X$  measure-valued polynomial diffusions taking values in the space of (non-negative) measures on a locally compact Polish space $E$. We define them as solutions of martingale problems with continuous trajectories for certain operators $L$ acting on classes of cylindrical functions, i.e.~functions $f$ of the form
\[
\textstyle f(\nu)=\phi\left(\int_E g_1(x)\nu(dx),\ldots,\int_E g_m(x)\nu(dx)\right),
\]
where $\phi \in C^{\infty}(\mathbb{R}^m)$, $g_1,\ldots,g_m$ are continuous and bounded, and the argument $\nu$ is a (non-negative) measure.  The defining property of a measure-valued polynomial diffusion is that when $L$ is applied to a cylindrical polynomial, i.e. $\phi$ is a polynomial on $\mathbb{R}^m$, then $Lf$ is again a polynomial with the same degree as $f$. Precise definitions are given in Section~\ref{sec_pol_op}. For  related concepts in general Banach spaces we refer to the inspiring paper 
\cite{BDK:20}.

As a consequence of the defining property of polynomial diffusion we can then prove the following results:

\begin{itemize}
\item We show that the conditional marginal moments of measure-valued polynomial diffusions can be represented via a system of \emph{finite dimensional linear PDEs}, whose (maximal) spatial dimension corresponds to the degree of the  moment, see Theorem \ref{thm1}. This means that the
so-called \emph{moment formula} holds and that the tractability of finite dimensional polynomial processes can be preserved in this setting. Indeed, for polynomial terminal conditions the infinite dimensional Kolmogorov backward equation can be reduced to finite dimensional system of PDEs.

\item We give necessary and sufficient conditions for the existence of measure-valued polynomial diffusions by analyzing the corresponding martingale problem. When the domain of the respective operators is large enough, we obtain a representation of the drift and diffusion part analogous to polynomial diffusions on $\mathbb{R}^m_+$, analyzed in \cite{FL:16}, see Theorem \ref{mainthm}. Otherwise the infinite dimensional setting allows for much more generality by going strictly beyond this representation, see Theorem \ref{main1}.

\item One key ingredient in these proofs is
a characterization of the positive maximum principle of the respective operators. To this end we also obtain new optimality conditions which are applicable for all martingale problems on the space of (non-negative) measures, see Section \ref{IIstool}.

\item By specifying affine type operators in Section \ref{sec:affine} we recover Dawson-Watanabe-type superprocesses as the affine subclass of polynomial diffusions. 
In this case, additionally to the moment formula, the Laplace transform of the process' marginals is exponentially affine in the initial state. As well-known the corresponding characteristic exponent can then be computed by solving the associated Riccati partial differential equations (see, e.g., \cite{L:10}).
Let us here also mention that infinite dimensional affine
processes, however with values in Hilbert spaces, have recently been studied in \cite{STW:20, CKK:21}.

\item  We also show under which conditions exponential moment exist, so that uniqueness in law holds and the corresponding martingale problems are well-posed, see Theorem \ref{thm5}. This condition is satisfied by the subclass of measure-valued affine diffusions, whose marginal laws are then characterized by the solution of the associated Riccati partial differential equations.

\item We provide several concrete specifications in Section \ref{sec:applications}. Indeed, we consider as underlying space $E$ a finite set of points and recover thus the results of \cite{FL:16}. Moreover, we analyze in detail the case when $E \subseteq \mathbb{R}$, exploiting the characterization of strongly continuous positive groups provided in \cite{arendt:86}, which allows us to specify a concrete and easily applicable form of the infinitesimal generator. We also connect our findings with measure-valued analogs of Pearson diffusions (see, e.g., \cite{FS:08}) and the Black-Scholes model.
\end{itemize}

This shows in particular how to construct measure-valued versions of the  most famous finite dimensional models in finance and hence why 
measure-valued affine and polynomial diffusions qualify as tractable models for all high dimensional financial markets potentially  involving an infinite number of assets. As already mentioned above, this concerns especially term structures arising from fixed income markets with possibly multiple yield curves (see, e.g.,  \cite{CFG:16}), equity derivatives (see \cite{KK:15} and the references therein) or forward contracts in energy markets (see \cite{BBK:08, BK:14}).
Time to maturity (or the strike dimension in case of equity derivatives) then takes the role of the spatial structure. 

Our results can most notably be used for energy market modeling, in particular for electricity and gas markets, whose essential products are futures and forward contracts as well as options written on these.
We  concentrate on futures (also called swaps) with delivery over a
 time interval $[\tau_1, \tau_2]$. Their price at time $t \geq 0$ is denoted by $F (t, \tau_1 , \tau_2 )$ at $t \leq  \tau_1$. Following \cite{BBK:08}, $F (t, \tau_1 , \tau_2 )$ can be written as a weighted  integral of instantaneous forward prices $f(t,u)$ with delivery at time  $\tau_1 \leq u \leq \tau_2$, i.e.
\[
F (t, \tau_1 , \tau_2 )=\int_{\tau_1}^{\tau_2} w(u,\tau_1, \tau_2) f(t,u) du,
\]
where $w(u,\tau_1, \tau_2) $ denotes some weight function.
The crucial reason for using measure-valued processes in  electricity and gas modeling now comes from the fact that
there is no trading with the instantaneous forwards $f(t,u)$ for obvious reasons.
Thus, rather than  using $f(t,u) du$ in the expression of the future prices, we can also use a measure, which cannot necessarily be evaluated pointwise.
In the companion paper \cite{CGP:21} we establish a 
\emph{Heath-Jarrow-Morton} (HJM) approach (see \cite{HJM:92}) with measure-valued processes instead of processes taking values in some Hilbert space of functions.
We then obtain a HJM-drift condition that restricts the choice of the process since the drift part is completely determined, but we are free to specify its martingale part as long as we do not leave the state space of (non-negative) measures. Here, the full specification 
of affine and polynomial diffusions provided in Section \ref{sec_ex} comes into play, since it allows us to obtain a rich and flexible class of models while preserving tractability. Indeed, the martingale part given by $Q_1$ and $Q_2$ in Theorem \ref{main1} depends on  continuous functions satisfying certain admissibility conditions. For calibration and pricing purposes these functions can then be parametrized by neural networks. This in turn leads to fast calibration procedures where on the one hand the analytic tractability coming from the affine and polynomial nature and on the other hand (stochastic) gradient descent methods for neural networks can be exploited.

The remainder of the paper is organized as follows. In Section \ref{IIs1}
we introduce frequently used notation and  basic definitions. In Section \ref{IIsec:2} we define polynomials and cylindrical functions of measure arguments, while in Section \ref{IIstool} we prove optimality conditions for these function, which we need to apply the  positive maximum principle later on. In Section \ref{sec_pol_op} we  introduce the notion of polynomial operators in the current setting and characterize them as second order differential operator with affine drift and quadratic diffusion part. Section \ref{sec_ex} contains the main results, i.e.~the moment formula, sufficient conditions for existence and uniqueness and a full characterization when the domain of the infinitesimal generator is rich enough. Section \ref{sec:affine} is dedicated to the analysis of the affine subclass and Section \ref{sec:applications} concludes with exemplary specifications. In Appendix \ref{app:A} we prove necessary conditions for the positive maximum principle, that  in turn implies  existence of solutions to the martingale problem, see Appendix \ref{app_existence}. In Appendix \ref{app:C} we  recall a characterization of generators of strongly continuous positive groups proved in \cite{arendt:86}.

\subsection{Notation and basic definitions}\label{IIs1}

Throughout this article, $E$ is a locally compact Polish space endowed with its Borel $\sigma$-algebra.  

\begin{itemize}
\item $M_+(E)$ denotes the set of finite non-negative measures on $E$, 
and $M(E)=M_+(E)-M_+(E)$ the space of signed measures of the form $\nu_+-\nu_-$ with $\nu_+,\nu_-\in M_+(E)$. We usually leave ``non-negative'' away and just say finite measures for  $M_+(E)$.
Both $M(E)$ and $M_+(E)$ are equipped with the topology of weak convergence, which turns $M_+(E)$ into a Polish space. For $\mu,\nu\in M(E)$ we write $\mu\leq\nu$ if $\nu-\mu\in M_+(E)$ and $|\nu|$ for $\nu_++\nu_-$.

\item $C(E)$, $C_b(E)$, $C_0(E)$, $C_c(E)$ denote, respectively,  the spaces of  continuous, bounded, vanishing at infinity, and compactly supported real functions on $E$. We equip the  latter three with the topology of uniform convergence, and denote by $\|\fdot\|$ the supremum norm.

\item If $E$ is non-compact, then $E^\Delta=E\cup\{\Delta\}$ denotes the one-point compactification, i.e.~a compact Polish space. If $E$ is compact we write $E^\Delta=E$. We also define
\begin{align} \label{eq:C_Delta}
C_\Delta(E^k):=\big\{f|_{E^k}\ :\ f\in C((E^\Delta)^k)\big\},
\end{align}
which is a closed subspace of $C_b(E^k)$. The spaces $C_\Delta(E)$ and $C(E^\Delta)$ can be identified, 
and we sometimes regard elements of the former as elements of the latter, and vice versa. When $E$ is compact,  $C(E)=C_b(E)=C_0(E)=C_c(E)=C_\Delta(E)$ holds true and we thus simply write $C(E)$. Note that the constant function $1$ lies in $C_\Delta(E)$, but of course not in $C_0(E)$. This is one reason why we introduce the function space $C_\Delta(E^k)$.

\item One important underlying space, taking in the above definition the role of $E$, will be $M_+(E)$ if $E$ is compact, and $M_+(E^{\Delta})$ otherwise.
Recall that $M_+(E)$ is locally compact when $E$ is compact (see, e.g., \cite[Remark 1.2.3]{L:70}), which however does not hold true when $E$ is non-compact. Therefore we shall consider $M_+(E^{\Delta})$  in the non-compact case. Identifying $E^{\Delta}$ with $E$, when $E$ is compact, we just write $M_+(E^{\Delta})$ for both cases.
In order to distinguish between the different one-point compactifications,  we denote the one-point compactification of  $M_+(E^{\Delta})$ by $M^{\mathfrak{\Delta}}_+(E^{\Delta})$ and identify  it with measures of infinite mass. 
The function space $C_{\mathfrak{\Delta}}(M_+(E^{\Delta}))$ is defined analogously to \eqref{eq:C_Delta} and can be identified with $C(M^{\mathfrak{\Delta}}_+(E^{\Delta}))$.

\item $\widehat C_\Delta (E^k)$ is the closed subspace of $C_\Delta (E^k)$ consisting of symmetric functions $f$, i.e., 
$f(x_1,\ldots,x_k)=f(x_{\sigma(1)},\ldots,x_{\sigma(k)})$ for all $\sigma\in \Sigma_k$, the permutation group of $k$ elements.
$\widehat C_0(E^k)$ and $\widehat C(E^k)$ are defined similarly.
 For any $g\in\widehat C_\Delta (E^k), h\in \widehat C_\Delta (E^\ell)$ we denote by
$g\otimes h\in\widehat C_\Delta(E^{k+\ell})$ the symmetric tensor product, given by

\begin{equation} \label{IIeqn17}
\begin{aligned}
(g\otimes h) & (x_1,\ldots,x_{k+\ell}) \\
&= \frac{1}{(k+\ell)!} \sum_{\sigma\in\Sigma_{k+\ell}} g\big(x_{\sigma(1)},\ldots,x_{\sigma(k)}\big)h\big(x_{\sigma(k+1)},\ldots,x_{\sigma(k+\ell)}\big).
\end{aligned}
\end{equation}
 We emphasize that only symmetric tensor products are used in this paper.
\item Throughout the paper we let
$$
D \subseteq C_\Delta (E)
$$
be a dense linear subspace containing the constant function 1. We set $D^{\otimes 2}=D\otimes D:=\text{span}\{g\otimes g\ :\ g\in D\}$. This generalizes of course to $D^{\otimes k}$ for $k > 2$.
\end{itemize}

Two key notions that we shall often use are the \emph{positive maximum principle,} and the \emph{positive minimum principle}  for certain linear operators.
\begin{definition} \label{def1}
Fix a Polish space $\Xcal$ and a subset $\s\subseteq \Xcal$. Moreover, let $D(\Acal)$ be a linear subspace of $C(\Xcal)$.
\begin{itemize}
\item 
An operator $\Acal$ 
with domain $D(\Acal)$  is said to satisfy the positive maximum principle on $\s$ if 
$$\text{$f\in D(\Acal)$, $x\in \s$, $\sup_\s f=f(x)\geq0\quad$ implies $\quad\Acal f(x)\leq0$.}$$

\item An operator $\Acal$ 
with domain $D(\Acal)$ is said to satisfy the positive minimum principle on $\s$ if 
\[
0 \leq g  \in D(\Acal),\, x \in \s,\, \inf_{\s}g=g(x)=0\quad \text{implies} \quad \mathcal{A}g(x)\geq 0.
\]
\end{itemize}
\end{definition}

\begin{remark}\label{rem1}
Note that the positive maximum principle implies the positive minimum principle. Indeed, let $ g\geq 0$ with $\inf_\s g=g(x)=0$ and set $f=-g$ (which is possible since $D$ is a linear space). Then the positive maximum principle yields $\mathcal{A}f(x)=-\mathcal{A}g(x)\leq 0$.

Moreover, let $\Acal$ be an operator with domain $D(\Acal)$ satisfying the positive minimum principle on $\Scal$ and suppose that $1\in D(\Acal)$. 
Then $\Acal g =\Bcal g+mg$ for some map $m$ and some operator $\Bcal$ satisfying $\Bcal1=0$ and the positive maximum principle on $\Scal$. $\Bcal$ and $m$ can be explicitly constructed by setting $\Bcal g(x)=\Acal(g-g(x))(x)$ and $m(x):=\Acal 1(x)$.
\end{remark}

These notions shall play an important role throughout the paper. Indeed, the positive maximum principle (combined with conservativity) is essentially equivalent to the existence of an $\Scal$-valued solution to the martingale problem for $\Acal$, see \cite[Theorem 4.5.4]{EK:09}. Here it is crucial that $\Scal$ is locally compact.  In our setting the most important case is $\Scal= M_+(E^\Delta)$ which is locally compact.

The positive minimum principle will be used to characterize certain operators appearing in the generator of measure-valued polynomial diffusions. Note here that for generators of strongly continuous semigroups on $C(\mathcal{X})$ the  positive minimum principle is equivalent to generating a \emph{positive} semigroup, if $\mathcal{X}$ is compact, see \cite[Theorem B.II.1.6]{arendt:86}.

Finally, we indicate bounded pointwise limits by $\text{bp-}\lim$ in the sense of \cite[Appendix 3]{EK:09}.

\section{Polynomials and cylindrical functions of measure arguments} \label{IIsec:2}

In this section we recall the notion of polynomials of measure arguments similarly as in \cite{CLS:19}. 

\subsection{Monomials and polynomials}\label{IIs21}

A \emph{monomial} on $M(E)$ is defined via
\[
\langle g, \nu^k \rangle = \int_{E^k} g(x_1,\ldots,x_k) \nu(dx_1) \cdots \nu(dx_k)
\]
for some $k\in\N_0$, where $g\in \widehat C_\Delta (E^k)$ is referred to as the \emph{coefficient} of the monomial; see, e.g., \cite[Chapter 2]{D:93}. We identify $\widehat C_\Delta (E^0)$ with $\R$, so that for $k=0$ we have $\langle g,\nu^0\rangle=g\in\R$. The map $\nu\mapsto\langle g,\nu^k\rangle$ is clearly homogeneous of degree~$k$, and $g\mapsto\langle g,\nu^k\rangle$ is linear. Furthermore, the identity $\langle g,\nu^k\rangle \langle h,\nu^\ell\rangle = \langle g\otimes h,\nu^{k+\ell}\rangle$, holds true, where the symmetric tensor product $g\otimes h$ is defined in \eqref{IIeqn17}.

A polynomial on $M(E)$ is now defined as a (finite) linear combination of monomials,
\begin{equation} \label{IIeq:p(nu)}
p(\nu) = \sum_{k=0}^m \langle g_k,\nu^k\rangle,
\end{equation}
with coefficients $g_k\in\widehat C_\Delta (E^k)$, and 
we shall denote the coefficients vector $(g_0, \ldots, g_m)$ of a polynomial by $\vec{g}=(g_0, \ldots, g_m) \in \bigoplus_{k=0}^m \widehat{C}_\Delta (E^k)$.

 The degree of a polynomial $p$, denoted by $\deg(p)$, is the largest $k$ such that $g_k$ is not the zero function, and $-\infty$ if $p$ is the zero polynomial. The representation~\eqref{IIeq:p(nu)} is unique; see Corollary 2.4 in \cite{CLS:19}.\\

The following example shows that the notion of polynomials coincides with the usual one on $\mathbb{R}^m$ when $E$ is a finite set with $m$ elements.

\begin{example}\label{IIex4}
Let $E=\{1,\ldots,m\}$ be a finite set. Then every element $\nu\in M(E)$ is of the form
\[
\nu = c_1\delta_1 + \cdots + c_m \delta_m, \qquad (c_1,\ldots,c_m)\in\R^m,
\]
where $\delta_i$ is the Dirac mass concentrated at $\{i\}$. Monomials take the form
\[
\langle g,\nu^k\rangle = \sum_{i_1,\ldots,i_k} g(i_1,\ldots,i_k)\, c_{i_1}\cdots c_{i_k},
\]
where the summation ranges over $E^k=\{1,\ldots,m\}^k$. Therefore, as $g$ can be any symmetric function on $E^k$, we recover all homogeneous polynomials of total degree $k$ in the $m$ variables $c_1,\ldots,c_m$. 
\end{example}

In the following we define the function space of polynomials on $M(E)$.

\begin{definition} \label{IID:Pol(M(E))}
Let
\[
P := \left\{\nu\mapsto p(\nu) \colon \text{$p$ is a polynomial on $M(E)$} \right\}
\]
denote the algebra of all polynomials on $M(E)$ regarded as real-valued maps, equipped with pointwise addition and multiplication.
\end{definition}

The subsequent lemma asserts that 
polynomials on $M(E)$ can be uniquely extended to polynomials on $M(E^{\Delta})$, which we shall apply regularly. For its proof we refer to Lemma 2.3.1 in \cite{CLS:19}

\begin{lemma} \label{IIL:Psmooth}
Each $p\in P$ is continuous on $M_+(E)$, sequentially continuous on $M(E)$, and can be uniquely extended to a polynomial on $M(E^\Delta)$.\footnote{It can be shown that sequential continuity cannot be strengthened to continuity.}
\end{lemma}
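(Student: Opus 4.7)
The plan is to reduce everything to the monomial case. By linearity, each $p \in P$ is a finite sum of monomials $\nu\mapsto\langle g,\nu^k\rangle$ with $g\in\widehat C_\Delta(E^k)$, so it suffices to treat these. By the very definition of $C_\Delta(E^k)$, each $g$ admits a unique extension $\tilde g\in\widehat C((E^\Delta)^k)$. The natural candidate for the extension of the monomial to $M(E^\Delta)$ is then $\mu\mapsto \int_{(E^\Delta)^k}\tilde g\,d\mu^k$, and I would verify continuity, sequential continuity, and uniqueness in turn.

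For continuity on $M_+(E)$, note that the inclusion $M_+(E)\hookrightarrow M_+(E^\Delta)$ via zero-extension at $\Delta$ is continuous, since test functions $f\in C(E^\Delta)$ identify precisely with $C_\Delta(E)\subseteq C_b(E)$. Since $E^\Delta$ is compact Polish, the tensor-power map $\mu\mapsto\mu^k$ from $M_+(E^\Delta)$ to $M_+((E^\Delta)^k)$ is continuous, and integration against $\tilde g\in C((E^\Delta)^k)$ is a continuous linear functional. Composing these continuous maps yields continuity of the monomial on $M_+(E)$.

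For sequential continuity on $M(E)$, I would again pass to $E^\Delta$ via zero-extension, so that $\nu_n\to\nu$ weakly in $M(E)$ implies $\nu_n\to\nu$ in the weak-$*$ topology of $M(E^\Delta)=C(E^\Delta)^*$. The uniform boundedness principle then yields $\sup_n|\nu_n|(E^\Delta)<\infty$. On elementary tensors one has $\int(\phi_1\otimes\cdots\otimes\phi_k)\,d\nu_n^k=\prod_{i=1}^k\int\phi_i\,d\nu_n$, which converges to the corresponding product for $\nu$; extending by Stone--Weierstrass density of tensor polynomials in $C((E^\Delta)^k)$ together with the uniform total-variation bound on $\nu_n^k$ gives $\int\tilde g\,d\nu_n^k\to\int\tilde g\,d\nu^k$. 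Since $\nu$ carries no mass at $\Delta$, the right-hand side equals $\langle g,\nu^k\rangle$, establishing sequential continuity.

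Uniqueness of the polynomial extension to $M(E^\Delta)$ follows from the unique representation of polynomials (Corollary 2.4 of \cite{CLS:19}): if a polynomial $r$ on $M(E^\Delta)$ with coefficients $h_k\in\widehat C((E^\Delta)^k)$ vanishes on $M(E)$, then testing $r$ on Dirac combinations $\sum c_i\delta_{x_i}$ with $x_i\in E$ and varying $c_i>0$ (in the spirit of Example \ref{IIex4}) forces $h_k\equiv 0$ on $E^k$, whence continuity of $h_k$ and density of $E^k$ in $(E^\Delta)^k$ yield $h_k\equiv 0$ on $(E^\Delta)^k$. The main obstacle I anticipate is the sequential-continuity step: a purely topological version fails on $M(E)$ (as recorded in the footnote), so one genuinely needs the uniform total-variation bound obtained from the sequential weak-$*$ hypothesis, and the argument cannot be packaged as a composition of continuous maps on $M(E)$ alone.
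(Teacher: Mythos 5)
Your proof is correct. The paper does not prove this lemma itself but defers to Lemma~2.3 of \cite{CLS:19}, and your argument --- extending the coefficients to $\widehat C((E^\Delta)^k)$, obtaining the total-variation bound from Banach--Steinhaus for the sequential-continuity step, using Stone--Weierstrass on elementary tensors, and recovering the coefficients from Dirac combinations for uniqueness --- is essentially the standard argument given there; the only step you leave implicit is that genuine continuity (not just sequential continuity) of $\mu\mapsto\mu^k$ on $M_+(E^\Delta)$ follows because the weak topology on finite non-negative measures over the compact Polish space $E^\Delta$ is metrizable.
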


\subsection{Directional derivatives}

The notion of derivatives of a function on $M(E)$ that we  apply throughout the paper is a 
directional one. More precisely, 
a function $f\colon M(E)\to\R$ is called differentiable at $\nu$ in direction $\delta_x$ for $x\in E$ if
\[
\partial_x f(\nu) := \lim_{\varepsilon\to0} \frac{f(\nu+\varepsilon\delta_x)-f(\nu)}{\varepsilon}
\]
exists. We write $\partial f(\nu)$ for the map $x\mapsto\partial_xf(\nu)$, and we use the notation
\[
\partial^k_{x_1x_2\cdots x_k} f(\nu) := \partial_{x_1}\partial_{x_2}\cdots \partial_{x_k} f(\nu)
\]
for iterated derivatives. We write $\partial^k f(\nu)$ for the corresponding map from $E^k$ to $\R$. Observe that for a linear map $p\in P$ of the form $p(\nu)=\langle g, \nu\rangle$ we get
$$\partial_x p(\nu)=\lim_{\e\to0}\frac{1}{\e}\int g(y) \e \delta_x(dy)=g(x)$$ for each $x\in E$.

\subsection{Cylindrical polynomials with regular coefficients}\label{sec:polycyl}

In order to be able to consider certain differential operators later on,  let us introduce subspaces of polynomials with more regular coefficients. 
Let $\Pol(\R^m)$ denote the set of polynomials on $\R^m$ and recall that
$
D \subseteq C_\Delta (E)
$
is a dense linear subspace containing the constant function 1. We then call \emph{cylindrical polynomial} an element of the set
$$
P^{D}:=\Big\{\nu \mapsto p(\nu)=\phi(\langle g_1, \nu \rangle, \ldots, \langle g_m, \nu \rangle)
\colon   \phi \in \Pol(\R^m),\, g_k \in D,\,  m\in \mathbb{N}_0\Big\}.
$$
Note that $P^D$ is the subalgebra of $P$ consisting of all (finite) linear combinations of the constant polynomial and ``rank-one'' monomials $\langle g\otimes\cdots\otimes g,\nu^k\rangle=\langle g,\nu\rangle^k$ with $g\in D$.

Since for $g_{k_1} \in \widehat{C}_{\Delta}(E^{k_1})$ and $g_{k_2} \in \widehat{C}_{\Delta}(E^{k_2})$,  it holds 
$ \langle g_{k_1}, \nu^{k_1} \rangle  \langle g_{k_2}, \nu^{k_2} \rangle =  \langle g_{k_1} \otimes g_{k_2}, \nu^{k_1+k_2} \rangle$, we can equivalently write 
\[
P^D=\left\{ \nu \mapsto \sum_{k=0}^m \langle g_k, \nu^k \rangle \colon m\in \mathbb{N}_0, \ g_k\in D^{\otimes k}\right\},
\]
where $D^{\otimes k}$ was introduced right after \eqref{IIeqn17}.
For the sake of completeness, let us restate parts of Lemma~2.7 of \cite{CLS:19}, showing that the derivative of $x\mapsto\partial_x p(\nu)$ has the same regularity as the coefficients of~$p$, the latter being our main motivation to introduce $P^D$.

\begin{lemma}\label{IIIlem1}
For any $p\in P^D$ and $\nu\in M(E)$, we have $\partial^k p(\nu)\in D^{\otimes k}$.  
\end{lemma}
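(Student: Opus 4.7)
The plan is to reduce the statement to the case of ``rank--one'' monomials. Recall from the discussion following the definition of $P^D$ that $P^D$ equals the linear span of the constant $1$ together with the monomials $\nu \mapsto \langle g,\nu \rangle^n = \langle g^{\otimes n}, \nu^n \rangle$ for $g \in D$ and $n \in \mathbb{N}$. Since $\partial^k$ is linear and annihilates constants as well as any polynomial of degree smaller than $k$, it suffices to verify the claim for each such generator.

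Fix $g \in D$ and $n \geq k$. Since $\nu \mapsto \langle g,\nu\rangle$ is linear, one has $\langle g,\nu + \varepsilon\delta_x\rangle = \langle g,\nu\rangle + \varepsilon g(x)$, so that for $F(\nu) := \langle g,\nu\rangle^n$,
\[
F(\nu + \varepsilon \delta_x) = \bigl(\langle g,\nu\rangle + \varepsilon g(x)\bigr)^n.
\]
Differentiating at $\varepsilon=0$ yields $\partial_x F(\nu) = n\langle g,\nu\rangle^{n-1} g(x)$, and iterating $k$ times (either by a direct binomial expansion or by repeatedly applying the univariate chain rule in $t \mapsto (\langle g,\nu\rangle+t g(x_j))^{\,\cdot}$) gives
\[
\partial^k_{x_1\cdots x_k}\langle g,\nu\rangle^n=\frac{n!}{(n-k)!}\,\langle g,\nu\rangle^{n-k}\,g(x_1)\cdots g(x_k).
\]
Because all summands in \eqref{IIeqn17} coincide when the same function $g$ is inserted in every slot, $g(x_1)\cdots g(x_k)=g^{\otimes k}(x_1,\ldots,x_k)$. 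Hence, viewed as a function of $(x_1,\ldots,x_k)\in E^k$, the right--hand side is a scalar multiple of $g^{\otimes k}$ and therefore lies in $D^{\otimes k}$.

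Summing over the generators of $P^D$ via linearity finishes the argument. There is no serious obstacle here: the proof is essentially a bookkeeping exercise, the only mild subtlety being the identification of the pointwise product $g(x_1)\cdots g(x_k)$ with the symmetric tensor power $g^{\otimes k}$, which is immediate from the convention in \eqref{IIeqn17}. Alternatively one could start from the cylindrical form $p(\nu)=\phi(\langle g_1,\nu\rangle,\ldots,\langle g_m,\nu\rangle)$, differentiate via the multivariate chain rule to obtain
\[
\partial^k_{x_1\cdots x_k}p(\nu)=\sum_{i_1,\ldots,i_k=1}^{m}(\partial^k_{i_1\cdots i_k}\phi)(\langle g_1,\nu\rangle,\ldots,\langle g_m,\nu\rangle)\,g_{i_1}(x_1)\cdots g_{i_k}(x_k),
\]
and then invoke the standard polarization identity to write each mixed tensor $g_{i_1}\otimes\cdots\otimes g_{i_k}$ as a linear combination of pure powers $\bigl(\sum_j \varepsilon_j g_{i_j}\bigr)^{\otimes k}$ with $\varepsilon_j\in\{-1,+1\}$, which lie in $D^{\otimes k}$ since $D$ is a linear subspace.
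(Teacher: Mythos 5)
Your proof is correct and follows essentially the same route as the paper, which does not reprove this statement but cites Lemma~2.7 of \cite{CLS:19} and, for the companion Lemma~\ref{IIIlem1mod} on $F^D$, gives exactly the argument you sketch: differentiate the rank-one generators $\langle g,\fdot\rangle^n$ (equivalently, apply the chain rule to the cylindrical form) and observe that the resulting coefficient functions are symmetric tensor powers of elements of $D$. Your explicit treatment of the identification $g(x_1)\cdots g(x_k)=g^{\otimes k}(x_1,\ldots,x_k)$ and of the polarization step needed to place mixed tensors $g_{i_1}\otimes\cdots\otimes g_{i_k}$ in $\operatorname{span}\{g^{\otimes k}:g\in D\}$ fills in the details the paper leaves implicit.
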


\subsection{Cylindrical functions}\label{sec:funcyl}

Similarly to cylindrical polynomials we also consider cylindrical functions. Define 
$$
F^{D}=\Big\{ \nu \mapsto f(\nu)=\phi(\langle g_1, \nu \rangle, \ldots, \langle g_m, \nu \rangle)
\colon,   \phi \in C^{\infty}_0(\mathbb{R}^m),  g_k \in D,\,  m\in \mathbb{N}_0\Big\}.
$$
Since concatenations of continuous functions are continuous, cylindrical functions satisfy the following lemma.

\begin{lemma} \label{Fcont}
 Each $f\in F^D$ is continuous on $M_+(E)$, sequentially continuous on $M(E)$, and can be uniquely extended to  $M(E^\Delta)$.
\end{lemma}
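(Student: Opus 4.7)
The strategy is to write $f=\phi\circ\Psi$, where $\Psi\colon\nu\mapsto(\langle g_1,\nu\rangle,\ldots,\langle g_m,\nu\rangle)$ maps into $\R^m$, and then reduce each of the three assertions to corresponding properties of the linear evaluation maps $\Lambda_g\colon\nu\mapsto\langle g,\nu\rangle$, propagating them through $\phi\in C_0^\infty(\R^m)$ by continuity.

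First I would verify that for every $g\in D\subseteq C_\Delta(E)\subseteq C_b(E)$, the map $\Lambda_g$ is continuous on $M_+(E)$ and sequentially continuous on $M(E)$. The former is immediate from the definition of weak convergence of finite measures, since $g$ is bounded continuous; the latter follows either by decomposing $\nu=\nu_+-\nu_-$ and applying linearity, or more directly from the fact that the topology of weak convergence on $M(E)$ is designed precisely to make $\Lambda_g$ sequentially continuous for $g\in C_b(E)$. Since $\Psi$ is the tuple $(\Lambda_{g_1},\ldots,\Lambda_{g_m})$, it is continuous on $M_+(E)$ and sequentially continuous on $M(E)$, and composing with the continuous $\phi$ yields the first two assertions for $f$.

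For the extension to $M(E^\Delta)$, I would use that each $g_k\in D\subseteq C_\Delta(E)$ admits, by definition~\eqref{eq:C_Delta}, a unique continuous extension $\bar g_k\in C(E^\Delta)$. Setting
\[
\bar f(\nu):=\phi\big(\langle\bar g_1,\nu\rangle,\ldots,\langle\bar g_m,\nu\rangle\big),\qquad\nu\in M(E^\Delta),
\]
furnishes a natural extension: identifying $\nu\in M(E)$ with the element of $M(E^\Delta)$ carrying no mass at $\Delta$, one has $\bar f|_{M(E)}=f$. Uniqueness is then obtained by repeating the first step with $E$ replaced by $E^\Delta$, so that $\bar f$ is itself sequentially continuous on $M(E^\Delta)$, and by invoking sequential density of $M(E)$ in $M(E^\Delta)$: any $\nu\in M(E^\Delta)$ with $\nu(\{\Delta\})=\alpha$ is approximated by $\nu|_E+\alpha\delta_{x_n}$ with $x_n\in E$, $x_n\to\Delta$. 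I do not anticipate any genuine obstacle: the argument is a direct analog of Lemma~\ref{IIL:Psmooth} (see Lemma~2.3.1 of \cite{CLS:19}), the only difference being that only continuity—not the polynomial structure—of the outer function is actually used.
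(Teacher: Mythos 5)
Your proof is correct and follows essentially the same route as the paper, which disposes of this lemma with the single observation that concatenations of continuous functions are continuous (the extension being the direct analog of Lemma~2.3.1 in \cite{CLS:19}, exactly as you note): everything reduces to the continuity properties of the linear evaluations $\nu\mapsto\langle g,\nu\rangle$ for $g\in C_\Delta(E)$, propagated through $\phi$. The only blemish is the parenthetical suggestion to get sequential continuity on $M(E)$ by decomposing $\nu=\nu_+-\nu_-$ --- weak convergence $\nu_n\to\nu$ does not give $\nu_{n,\pm}\to\nu_\pm$ --- but your alternative direct argument via the definition of the weak topology is the right one, so this is immaterial.
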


For further use we now define the restriction of $F^D$ to $M_+(E^\Delta)$, i.e.
\[
F^D(M_+(E^\Delta)):=\{f|_{M_+(E^\Delta)}\colon f\in F^D\}.
\]
Observe that functions in $F^D$ are first extended to $M(E^\Delta)$ using Lemma \ref{Fcont} and then restricted  to $M_+(E^\Delta)$. Since elements in $F^D(M_+(E^\Delta))$ do not need to be compactly supported we also define
$$F^D_c:=F^D(M_+(E^\Delta)) \cap C_c(M_+(E^\Delta)).$$

\begin{lemma}\label{IIIlem1mod}
For any $f\in F^D$ and $\nu\in M(E)$, we have $\partial^k f(\nu)\in D^{\otimes k}$.  
Moreover $F^D_c$ is dense in $C_0(M_+(E^\Delta))$. 
\end{lemma}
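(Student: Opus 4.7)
The plan is to treat the two assertions separately; both reduce to unwinding the cylindrical structure of $f\in F^D$.

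For the first, I would proceed exactly as in Lemma~\ref{IIIlem1}. Writing $f(\nu)=\phi(\langle g_1,\nu\rangle,\ldots,\langle g_m,\nu\rangle)$ with $\phi\in C^\infty_0(\R^m)$ and $g_i\in D$, the chain rule together with the identity $\partial_x\langle g,\nu\rangle=g(x)$ (recalled in Section~\ref{sec:polycyl}) gives, by a short induction on $k$,
\begin{equation*}
\partial^k_{x_1\cdots x_k}f(\nu)=\sum_{i_1,\ldots,i_k=1}^m\bigl(\partial_{i_1}\cdots\partial_{i_k}\phi\bigr)(\langle g_1,\nu\rangle,\ldots,\langle g_m,\nu\rangle)\,g_{i_1}(x_1)\cdots g_{i_k}(x_k).
\end{equation*}
This function of $(x_1,\ldots,x_k)$ is symmetric because mixed partials of $\phi$ commute, and by the polarization identity each symmetric tensor $g_{i_1}\otimes\cdots\otimes g_{i_k}$ lies in $D^{\otimes k}=\mathrm{span}\{g^{\otimes k}\,:\,g\in D\}$, so $\partial^k f(\nu)\in D^{\otimes k}$.

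For the density statement, I would run a Stone--Weierstrass argument on the locally compact Polish space $M_+(E^\Delta)$ (local compactness as recalled in Section~\ref{IIs1}, since $E^\Delta$ is compact). First, the linear span $\Acal$ of $F^D_c$ is a subalgebra of $C_c(M_+(E^\Delta))\subseteq C_0(M_+(E^\Delta))$: it is a subspace by construction, and it is closed under products, since for $f_i(\nu)=\phi_i(\langle g^i_1,\nu\rangle,\ldots,\langle g^i_{m_i},\nu\rangle)\in F^D_c$ one has $f_1 f_2(\nu)=\Phi\bigl(\langle g^1_1,\nu\rangle,\ldots,\langle g^2_{m_2},\nu\rangle\bigr)$ with $\Phi(x,y):=\phi_1(x)\phi_2(y)\in C^\infty_0(\R^{m_1+m_2})$, and a product of compactly supported functions on $M_+(E^\Delta)$ is compactly supported.

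Next I would verify the two Stone--Weierstrass hypotheses. For separation of points: given distinct $\nu_1,\nu_2\in M_+(E^\Delta)$, Riesz representation on the compact space $E^\Delta$ together with the density of $D$ in $C_\Delta(E)\cong C(E^\Delta)$ produces $g\in D$ with $\langle g,\nu_1\rangle\ne\langle g,\nu_2\rangle$; picking a smooth compactly supported $\psi$ that separates these two real numbers and a bump $\chi$ with $\chi\equiv 1$ on $[-M,M]$ for $M\ge\max(\nu_1(E^\Delta),\nu_2(E^\Delta))$, the function $\nu\mapsto\psi(\langle g,\nu\rangle)\chi(\langle 1,\nu\rangle)$ lies in $F^D_c$ (using that $1\in D$) and separates $\nu_1$ from $\nu_2$. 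For non-vanishing at each $\nu$: a bump $\chi$ with $\chi(\nu(E^\Delta))\ne 0$ makes $\mu\mapsto\chi(\langle 1,\mu\rangle)\in F^D_c$ non-zero at $\nu$. Stone--Weierstrass for $C_0$ of a locally compact Hausdorff space then yields density of $\Acal$, i.e.\ of the (closed) linear span of $F^D_c$, in $C_0(M_+(E^\Delta))$.

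The only real subtlety is that membership in $F^D$ (a condition on $\phi$) and compact support on $M_+(E^\Delta)$ (a mass-boundedness condition on $\nu$) are a priori independent: a smooth compactly supported $\phi$ on $\R^m$ does not by itself make $\phi(\langle g_1,\nu\rangle,\ldots,\langle g_m,\nu\rangle)$ compactly supported on $M_+(E^\Delta)$ unless some $g_i$ controls the total mass. This is what forces the cut-off factor $\chi(\langle 1,\nu\rangle)$ to appear throughout and why the standing assumption $1\in D$ is essential. The first assertion, by contrast, is a purely formal chain-rule induction.
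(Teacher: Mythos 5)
Your proof is correct and follows essentially the same route as the paper's: the first assertion by the chain rule starting from $\partial^k f(\nu)=g^{\otimes k}\in D^{\otimes k}$ for $f=\langle g,\,\cdot\,\rangle$, and the second by the locally compact Stone--Weierstrass theorem applied to the point-separating, nowhere-vanishing algebra built from $F^D_c$. You merely make explicit the details the paper leaves implicit (the derivative formula, the separation and non-vanishing witnesses, and the mass cut-off $\chi(\langle 1,\nu\rangle)$, which is indeed where the standing assumption $1\in D$ enters).
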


\begin{proof}
For $f(\nu):=\langle g,\nu\rangle$  we have $\partial^k f(\nu)= g^{\otimes k}\in D^{\otimes k}$. The first part follows by the chain rule.
Since $F^D_c$ is
a point separating algebra that vanishes nowhere, the locally compact version of the 
 Stone--Weierstrass theorem yields the second part. 
\end{proof}

Finally, we introduce a set that will be used as domain for linear operators: 
\begin{align}\label{eq:domain}
\mathcal{D}:=\operatorname{span}\{ P^D, F^D  \}.
\end{align}
Observe that we here do not specify the Banach space containing $\mathcal{D}$.

\section{Optimality conditions}\label{IIstool}

We now develop optimality conditions for cylindrical functions and polynomials of measure arguments, which we need in order to apply the  positive maximum principle on $M_+(E^\Delta)$. Our first result, Theorem~\ref{th:firstsecondorder}, extends the classical first and second order Karush--Kuhn--Tucker conditions for functions on $\mathbb{R}_+^m$ (see, e.g., \cite{B:97}).

Throughout the section we let $\Dcal$ be the set defined in \eqref{eq:domain}.
Note that we use Lemma~\ref{IIIlem1} and Lemma~\ref{Fcont} to extend the polynomials and cylindrical functions from $M_+(E)$ to $M_+(E^{\Delta})$.

\begin{theorem} \label{th:firstsecondorder}
Let $f\in \Dcal$ and $\nu^*$ satisfy $f(\nu^*)=\max_{\nu\in M_+(E^\Delta)}f(\nu)$. Then the following first and second order optimality conditions are satisfied.
\begin{enumerate}

\item\label{iti}
For all $\mu,\nu \in M(E^{\Delta})$ with $\supp(\mu) \subseteq \supp (\nu^*)$, it holds that
$$\langle \partial f(\nu^*), \mu  \rangle=0\qquad\text{and}\qquad \langle \partial f(\nu^*), \nu  \rangle\leq 0.$$
In particular,
$\partial_x f(\nu^*)\leq 0 $ for all  $x \in E^{\Delta}$ and
vanishes for all $x \in \operatorname{supp}(\nu^*)$.

\item\label{itii} For all $\mu \in M(E^{\Delta})$ with $\supp (\mu) \subseteq \supp(\nu^*)$, it holds that
\[
\langle \partial^2 f(\nu^*), \mu^2 \rangle \leq 0.
\]
In particular,   $\partial^2_{xx} f(\nu^*)\leq 0 $ for all $x \in \operatorname{supp}(\nu^*)$. 
\end{enumerate}
\end{theorem}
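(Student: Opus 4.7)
My plan is to reduce everything to a finite-dimensional analysis via the cylindrical structure. Any $f\in\Dcal$ admits a representation $f(\nu)=\Phi(L(\nu))$ with $\Phi\in C^\infty(\R^m)$ (a polynomial plus a smooth compactly supported part, combined into a common vector of test functions) and $L(\nu):=(\langle g_1,\nu\rangle,\ldots,\langle g_m,\nu\rangle)$ for some $g_1,\ldots,g_m\in D$. The chain rule gives $\partial_xf(\nu^*)=\sum_i\partial_i\Phi(L(\nu^*))g_i(x)$ and $\partial^2_{xy}f(\nu^*)=\sum_{i,j}\partial_{ij}\Phi(L(\nu^*))g_i(x)g_j(y)$, so by Lemma \ref{IIIlem1mod} the maps $\partial f(\nu^*)\in D\subseteq C(E^\Delta)$ and $\partial^2 f(\nu^*)\in D^{\otimes 2}\subseteq C((E^\Delta)^2)$ are continuous.

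For item \ref{iti}, I exploit the admissible one-sided perturbations in $M_+(E^\Delta)$. For any $\rho\in M_+(E^\Delta)$, the curve $\epsilon\mapsto\nu^*+\epsilon\rho$ stays in $M_+(E^\Delta)$ for $\epsilon\geq 0$, so maximality forces $\langle\partial f(\nu^*),\rho\rangle\leq 0$; the choice $\rho=\delta_x$ yields $\partial_xf(\nu^*)\leq 0$ everywhere. The complementary shrinking curve $(1-\epsilon)\nu^*\in M_+(E^\Delta)$ for $\epsilon\in[0,1]$ gives $\langle\partial f(\nu^*),\nu^*\rangle\geq 0$, and combining the two inequalities yields $\int\partial_xf(\nu^*)\,\nu^*(dx)=0$. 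Since the integrand is continuous and nonpositive, it must vanish on $\supp(\nu^*)$; integrating then proves $\langle\partial f(\nu^*),\mu\rangle=0$ whenever $\supp(\mu)\subseteq\supp(\nu^*)$.

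For item \ref{itii} the obstacle is that a signed $\mu$ with $\supp(\mu)\subseteq S:=\supp(\nu^*)$ generally admits no two-sided admissible perturbation $\nu^*\pm\epsilon\mu\geq 0$. I circumvent this by approximation. Write $\mu=\mu_+-\mu_-$ by the Hahn decomposition and, along a refining sequence of finite Borel partitions $\{U^n_j\}_j$ of $E^\Delta$ with diameters tending to zero, set
\[
h_n:=\sum_{j\,:\,\nu^*(U^n_j)>0}\frac{\mu_-(U^n_j)}{\nu^*(U^n_j)}\,\bone_{U^n_j},\qquad \mu_n:=\mu_+-h_n\nu^*.
\]
Since $\supp(\mu_-)\subseteq\supp(\nu^*)$, each cell with $\nu^*(U^n_j)=0$ also carries no $\mu_-$-mass, and a standard uniform continuity argument on the compact space $E^\Delta$ yields $h_n\nu^*\to\mu_-$ weakly. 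Hence $\mu_n\to\mu$ weakly with $|\mu_n|(E^\Delta)$ uniformly bounded, $\supp(\mu_n)\subseteq S$, and $\nu^*+\epsilon\mu_n\in M_+(E^\Delta)$ for all $\epsilon\in[0,\epsilon_n]$, where $\epsilon_n:=1/\|h_n\|_\infty>0$ (or any positive number if $h_n\equiv 0$).

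Finally, $g_n(\epsilon):=f(\nu^*+\epsilon\mu_n)$ is $C^2$ on $[0,\epsilon_n]$ with $g_n'(0)=\langle\partial f(\nu^*),\mu_n\rangle=0$ by item \ref{iti} (using $\supp(\mu_n)\subseteq S$) and $g_n''(0)=\langle\partial^2 f(\nu^*),\mu_n\otimes\mu_n\rangle$; the one-sided Taylor test at a left-endpoint maximum with vanishing first derivative forces $g_n''(0)\leq 0$. Passing to the limit, uniform boundedness of $|\mu_n|$ together with weak convergence $\mu_n\to\mu$ gives $\mu_n\otimes\mu_n\to\mu\otimes\mu$ weakly against the continuous function $\partial^2 f(\nu^*)$, so $\langle\partial^2 f(\nu^*),\mu^2\rangle\leq 0$; specialising to $\mu=\delta_x$ with $x\in S$ yields the pointwise conclusion. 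The principal technical obstacle is the weak approximation $h_n\nu^*\to\mu_-$, but once the support inclusion is exploited this is a routine refining-partition argument.
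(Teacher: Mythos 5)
Your proof is correct in outline and takes a genuinely different route from the paper's in both parts. For part \ref{iti} the paper localizes: it shows $\partial_x f(\nu^*)\geq 0$ for $x\in\supp(\nu^*)$ by perturbing along $\nu^*-\varepsilon_n\nu_n$ with $\nu_n:=\nu^*(\fdot\cap A_n)/\nu^*(A_n)$ for shrinking balls $A_n$ around $x$, and combines this with the global inequality obtained from $\nu^*+\varepsilon\delta_x$. Your global scaling argument --- $(1-\epsilon)\nu^*$ gives $\langle\partial f(\nu^*),\nu^*\rangle\geq 0$, which together with pointwise nonpositivity and continuity of $\partial f(\nu^*)$ forces vanishing on the support --- is more economical and avoids the ball construction entirely. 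For part \ref{itii} both proofs face the same obstacle (a signed $\mu$ admits no two-sided admissible perturbation) and both resolve it by approximating $\mu$ by signed measures for which a one-sided perturbation is admissible and then passing to the weak limit; the paper uses finite combinations $\sum_i\lambda_i\nu_{n,i}$ of its localized normalized restrictions of $\nu^*$, while you keep $\mu_+$ and replace $\mu_-$ by a density $h_n$ against $\nu^*$. Your limit passage is in fact more explicit than the paper's terse ``passing to the weak closure'': since $\partial^2 f(\nu^*)$ is a finite sum of tensors $g_i\otimes g_j$, the convergence $\langle g_i,\mu_n\rangle\langle g_j,\mu_n\rangle\to\langle g_i,\mu\rangle\langle g_j,\mu\rangle$ is immediate from weak convergence with uniformly bounded total variation.

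One step needs repair. The assertion that $\supp(\mu_-)\subseteq\supp(\nu^*)$ forces $\mu_-(U^n_j)=0$ whenever $\nu^*(U^n_j)=0$ is false for general Borel cells: take $E^\Delta=[0,1]$, $\nu^*$ the Lebesgue measure on $[0,\tfrac12]$, $\mu_-=\delta_{1/2}$ and the dyadic cell $[\tfrac12,\tfrac12+2^{-n})$; it carries all of $\mu_-$ but no $\nu^*$-mass, so your $h_n\nu^*$ would converge to $0$ rather than to $\mu_-$, and the argument would only yield $\langle\partial^2 f(\nu^*),\mu_+^2\rangle\leq 0$. The cure is to adapt the covering to $\nu^*$: for instance build $h_n$ from balls centered at points of $\supp(\mu_-)\subseteq\supp(\nu^*)$ (every such ball has positive $\nu^*$-measure, which is precisely the observation underlying the paper's $\nu_n$), or merge each $\nu^*$-null cell meeting $\supp(\mu_-)$ into a nearby cell of positive $\nu^*$-measure within the same diameter scale. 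With that patch your argument goes through.
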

\begin{proof}\ref{iti}: Let $x\in \operatorname{supp}(\nu^*)$.
We shall first consider the following perturbation $\nu^*- \varepsilon_n \nu_n$ with $(\nu_n)_{n \in \mathbb{N}}$ a sequence of non-negative measures converging to $\delta_x$ and $\varepsilon_n > 0$ converging to $0$. To this end we need to construct $(\nu_n)_{n \in \mathbb{N}}$ and $(\varepsilon_n)_{n \in \mathbb{N}}$ such that $\nu^*- \varepsilon_n \nu_n$ is a non-negative measure for all $n \in \mathbb{N}$.

 For each $n\in \mathbb{N}$, let $A_n$ be the ball of radius $\frac{1}{n}$ centered at $x$, intersected with $\operatorname{supp}(\nu^*)$. Since $A_n \subset \operatorname{supp}(\nu^*)$, we have $\nu^*(A_n)>0$ and the measures $\nu_n := \nu^*(\fdot \cap A_n)/\nu^*(A_n)$ converge weakly to $\delta_x$ as $n\rightarrow \infty$. Let $\varepsilon_n\in(0,\nu^*(A_n))$ and note that  for all  $B\in \mathcal{B}(E^\Delta)$ 
$$\nu^*(B)-\varepsilon_n\frac{\nu^*(B\cap A_n)}{\nu^*(A_n)}\geq \frac{\nu^*(B\cap A_n)}{\nu^*(A_n)}(\nu^*(A_n)-\varepsilon_n)\geq 0, $$
where the last inequality follows from the fact that $\varepsilon_n\in(0,\nu^*(A_n))$. Hence $\nu^*-\varepsilon_n\nu_n$ is a non-negative measure. Moreover, using that $\nu_n$ is a probability measure, we can compute
\begin{equation}\label{eqn3}
\|(\langle g_1,\varepsilon_n\nu_n\rangle,\ldots,\langle g_k, \varepsilon_n\nu_n\rangle)\|^2\leq \varepsilon_n^2\sum_{i=1}^k\|g_i\|^2.
\end{equation}
By the maximality of $\nu^*$ and the Taylor expansion of $\phi$ we then have
\begin{equation*}
\begin{aligned} 0 &\geq f(\nu^*-\varepsilon_n\nu_n )-f(\nu^*)\\
&= \phi(\langle g_1, \nu^*-\varepsilon_n\nu_n \rangle,\ldots,\langle g_k, \nu^*-\varepsilon_n\nu_n \rangle)- \phi(\langle g_1, \nu^* \rangle,\ldots, \langle g_k,\nu^* \rangle)\\
&=-\varepsilon_n\sum_{i=1}^k \langle g_i, \nu_n\rangle\partial_i \phi(\langle g_1, \nu^*  \rangle,\ldots,\langle g_k, \nu^*  \rangle) + o(\varepsilon_n)\\
&=-\varepsilon_n \langle \partial f(\nu^*),\nu_n\rangle+ o(\varepsilon_n).
\end{aligned}
\end{equation*}
Dividing by $\varepsilon_n$, sending $n$ to infinity, and using the fact that $\nu_n$ converges weakly to $\delta_x$ we obtain 
$
\partial_x f(\nu^*)\geq 0
$
 for $x\in \operatorname{supp}(\nu^*)$. 
For general $x \in E^{\Delta}$, it holds that $\nu^* + \varepsilon \delta_x$ is a non-negative measure. By the maximimality of $\nu^*$ we thus have
\begin{equation*}
\begin{aligned} 0 &\geq f(\nu^*+\varepsilon \delta_x)- f(\nu^*) =\\
&= \phi(\langle g_1, \nu^* + \varepsilon \delta_x \rangle,\ldots, \langle g_k, \nu^* + \varepsilon \delta_x\rangle) - \phi(\langle g_1, \nu^* \rangle,\ldots,\langle g_k, \nu^* \rangle)\\
&=\varepsilon\sum_{i=1}^k g_i(x) \partial_i \phi(\langle g_1, \nu^*  \rangle,\cdots,\langle g_k, \nu^*  \rangle) + o(\varepsilon)\\
&=\varepsilon \partial_x f(\nu^*)+ o(\varepsilon).
\end{aligned}
\end{equation*}
Dividing again by $\varepsilon $ and sending it to $0$, we deduce that $\partial_x f \leq 0$ for all $x\in E^{\Delta}$.

Hence $\partial_x f = 0$ for all $x\in\operatorname{supp}(\nu^*)$  and $\partial_x f \leq 0$ for all $x\in E^{\Delta} \setminus \operatorname{supp}(\nu^*)$. This proves assertion~\ref{iti} as for all signed measures $\mu \in M(E^{\Delta})$ with $\supp(\mu) \subseteq \supp(\nu^*)$
\[
\langle \partial f(\nu^*), \mu \rangle= \int_{\supp(\mu)} \partial_x f(\nu^*)\mu(dx)=0.
\]

\ref{itii}: Let $\mu \in M_+(E^{\Delta})$ such that $\supp(\mu) \subseteq \supp (\nu^*)$.
Since $\nu^*$ is a maximum for $f$, we obtain from the second order Taylor expansion of $\phi$, the estimate~\eqref{eqn3}, and by using~\ref{iti}
\begin{equation}\label{eq:secondorder}
\begin{split}
 0 &\geq f(\nu^*+\varepsilon \mu)- f(\nu^*)= \\
&=\varepsilon \sum_{i=1}^k \partial_i \phi(\langle g_1, \nu^*  \rangle,\ldots,\langle g_k, \nu^*  \rangle) \langle g, \mu \rangle  \\
& \quad + \frac{1}{2}\varepsilon^2 \sum_{i,j=1}^n \partial_{ij} \phi(\langle g_1, \nu^*  \rangle,\ldots,\langle g_k, \nu^*  \rangle) \langle g_i \otimes g_j, \mu^2 \rangle + o(\varepsilon^2)\\
&=\frac{1}{2}\varepsilon^2 \langle \partial^2 f(\nu^*), \mu^2 \rangle+ o(\varepsilon^2).
\end{split}
\end{equation}
Therefore, $\langle \partial^2 f(\nu^*), \mu^2 \rangle \leq 0$ for $\mu\in M_+(E^{\Delta})$ with $\supp(\mu) \subseteq \supp (\nu^*)$. In particular, for $x\in\operatorname{supp}(\nu^*)$ and choosing $\mu = \delta_x$ we have
$$\partial^2_{xx} f(\nu^*)=\sum_{i,j=1}^n g_i(x) g_j(x)\partial_{ij} \phi(\langle g_1, \nu^*  \rangle,\ldots,\langle g_k, \nu^*  \rangle) \leq 0.
 $$ 
For the general signed measure case, define
\[
\mu_n:=\sum_{i=1}^m \lambda_i \nu_{n,i},
\]
for $m \in \mathbb{N}$, $\lambda_1, \ldots, \lambda_m \in \mathbb{R}$ and $\nu_{n,i}$ constructed as $\nu_n$ above  with $x$ replaced by $x_i \in \supp(\nu^*)$. Letting $\varepsilon_n$ decrease to $0$ sufficiently fast (note that only the negative $\lambda_i$ have to be taken into account) yields $\nu^*+ \varepsilon_n \mu_n \in M_+(E^{\Delta})$. The same reasoning as in \eqref{eq:secondorder} replacing $\varepsilon \mu$ by $\varepsilon_n \mu_n$ and passing to the limit implies that 
\[
\langle \partial^2 f(\nu^*), \Big(\sum_{i=1}^m \lambda_i \delta_{x_i}\Big)^{ 2} \rangle \leq 0.
\]
Passing to the weak closure yields $\langle \partial^2 f(\nu^*), \mu^2 \rangle \leq 0$ for all $\mu \in M(E^{\Delta})$ with $\supp(\mu) \subseteq \supp(\nu^*)$.
This implies the assertions of \ref{itii}.
\end{proof}

\begin{remark}
To compare the results of Theorem~\ref{th:firstsecondorder} with the classical conditions for functions on $\mathbb{R}^m_+$, let $E=\{1,\ldots, m\}$. Then $M_+(E)$ can be identified with $\mathbb{R}^m_+$. Condition~\ref{iti} translates to 
\[
\partial_i f(\nu^*)=0, \quad i \in E,
\]
if the $i$-th component of $\nu^*$ is strictly positive and $\partial_i f(\nu^*)\leq 0$ otherwise.
Condition~\ref{itii} corresponds to 
\[
y^{\top}\partial^2 f(\nu^*)y \leq 0
\]
for all $y \in \mathbb{R}^m$ such that $y_j=0$ if $\nu^*_j=0$. In particular
\[
\partial^2_{ii} f(\nu^*) \leq 0, \quad i \in E,
\]
 in the case where the $i$-th component of $\nu^*$ is strictly positive.
These  come from the classical Karush-Kuhn-Tucker conditions when the inequality constraints describe $\mathbb{R}_m^+$.
\end{remark}

Our next optimality condition is an slight adaptation of Theorem 3.4 in \cite{CLS:19}. In comparison with that result  we do not need to work with  a  group of positive isometries, but just with a  positive group. Let us recall here the definition of a positive group (see, e.g., \cite{arendt:86}).

\begin{definition}
A group $\{T_t\}_{t \in \mathbb{R}}$ on $C_{\Delta}(E)$ is positive if for
any $0 \leq g \in C_{\Delta}(E)$, we have 
$
T_t g  \geq 0.
$
\end{definition} 
We shall use the tensor notation $A\otimes A$  to denote the linear operator from $D\otimes D$ to $\widehat C_\Delta(E^2)$ determined by
$$(A\otimes A)(g\otimes g):=(Ag)\otimes (Ag)$$
 for a given linear operator $A\colon D\to  C_\Delta (E)$.

\begin{theorem}\label{IIL:KKT2}
Fix $f\in \Dcal$ and $\nu_*\in M_+(E^\Delta)$ with $f(\nu_*)=\max_{\nu \in M_+(E^\Delta)} f(\nu)$. Let $A$ be the generator of a strongly continuous positive group on $C_\Delta(E)$, and assume the domain of $A$ contains both $D$ and $A(D)$. Then
\[
\langle A^2(\partial f(\nu_*)), \mu\rangle + \langle (A\otimes A)(\partial^2 f(\nu_*)),\mu^2\rangle \le 0
\]
for every $\mu \in M_+(E^\Delta)$ with $\mu\le \nu_*$ and $\supp(\mu) \subseteq \supp(\nu_*)$.
\end{theorem}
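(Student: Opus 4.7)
The plan is to reduce the statement to the classical one-dimensional second-order necessary condition at an interior maximum, by perturbing $\nu_*$ along the orbit of the adjoint group of $\{T_t\}_{t\in\mathbb R}$. Given $\mu\in M_+(E^\Delta)$ with $\mu\le\nu_*$, I define the curve
\[
\nu(t):=(\nu_*-\mu)+T_t^*\mu,\qquad t\in\mathbb R,
\]
where $T_t^*$ denotes the adjoint of $T_t$ acting on measures via $\langle g,T_t^*\mu\rangle=\langle T_tg,\mu\rangle$. Crucially, $\nu(t)\in M_+(E^\Delta)$ for every $t\in\mathbb R$: the summand $\nu_*-\mu$ is non-negative by hypothesis, and $T_t^*\mu$ is non-negative because $T_t$ is a positive operator. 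This is precisely where the group structure, as opposed to a mere semigroup, matters: it makes $t=0$ an interior maximum of the real function $\psi(t):=f(\nu(t))$, which attains its global maximum at $t=0$ since $\nu(0)=\nu_*$.

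Next, I would represent any $f\in\Dcal$ in cylindrical form $f(\nu)=\phi(\langle g_1,\nu\rangle,\ldots,\langle g_k,\nu\rangle)$ with $\phi\in C^2(\mathbb R^k)$ and $g_i\in D$ (combining the $P^D$ and $F^D$ summands by concatenating their coefficient lists into a common list of $g_i$'s). Setting $h_i(t):=\langle g_i,\nu(t)\rangle=\langle g_i,\nu_*\rangle+\langle T_tg_i-g_i,\mu\rangle$, the hypothesis $D,A(D)\subseteq D(A)$ gives $g_i\in D(A^2)$, so the standard semigroup identity $\tfrac{d^n}{dt^n}T_tg=T_tA^ng$ (valid for $g\in D(A^n)$) makes each $h_i$ a $C^2$ function on $\mathbb R$ with
\[
h_i'(0)=\langle Ag_i,\mu\rangle,\qquad h_i''(0)=\langle A^2g_i,\mu\rangle.
\]
Applying the chain rule to $\psi(t)=\phi(h_1(t),\ldots,h_k(t))$ and using the identifications $\partial f(\nu_*)=\sum_i\partial_i\phi\cdot g_i$ and $\partial^2 f(\nu_*)=\sum_{i,j}\partial_{ij}\phi\cdot g_i\otimes g_j$ (symmetry of $\partial_{ij}\phi$ reconciles this with the symmetric tensor product), I arrive at
\[
\psi''(0)=\langle A^2(\partial f(\nu_*)),\mu\rangle+\langle (A\otimes A)(\partial^2 f(\nu_*)),\mu^2\rangle.
\]
The second-derivative test at the interior maximum then yields $\psi''(0)\le 0$, which is exactly the inequality to be proved.

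The only genuinely conceptual point is the non-negativity of the perturbation $\nu(t)$: in \cite{CLS:19} this relied on a positive isometry group acting on both the measure and the orthogonal complement of its support, whereas here positivity of $T_t$ alone suffices because the non-negative piece $\nu_*-\mu$ absorbs the ``base'' and $T_t^*\mu$ stays non-negative for every $t$. This is precisely what lets one weaken ``positive isometry group'' to ``positive group.'' The remaining bookkeeping --- verifying the $C^2$-regularity of $h_i$ via the semigroup Taylor expansion and recognising the chain-rule sums as the stated dual pairings with $A^2(\partial f(\nu_*))$ and $(A\otimes A)(\partial^2 f(\nu_*))$ --- is routine. Note that the support condition $\supp(\mu)\subseteq\supp(\nu_*)$ in the statement is automatic from $\mu\le\nu_*$ and is not needed in the argument.
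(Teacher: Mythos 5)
Your proof is correct and follows essentially the same route as the paper: the same perturbation $\nu_*+T_t^*\mu-\mu$, the same use of positivity of the group to keep the perturbation in $M_+(E^\Delta)$, and the same second-order Taylor expansion exploiting that $t$ ranges over all of $\mathbb{R}$. The one place you deviate is the first-order term: the paper kills $t\,\langle A(\partial f(\nu_*)),\mu\rangle$ by showing via Theorem~\ref{th:firstsecondorder} and the positive minimum principle for both $A$ and $-A$ that $A(\partial f(\nu_*))$ vanishes on $\supp(\nu_*)\supseteq\supp(\mu)$, whereas you obtain $\psi'(0)=0$ for free from the first-derivative test at the interior maximum of the scalar $C^2$ function $\psi$; this is a legitimate streamlining, and your observation that the hypothesis $\supp(\mu)\subseteq\supp(\nu_*)$ is implied by $\mu\le\nu_*$ (hence redundant in the statement) is also correct.
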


\begin{proof}
In what follows we adapt the proof of Theorem 3.4 in \cite{CLS:19} to
the case $M_+(E^{\Delta})$.
Let $\{T_t\}_{t\in\R}$ be the group generated by $A$. For any $\mu\in M_+(E^\Delta)$, the group induces a flow of measures $\mu_t\in M(E^\Delta)$ via the formula $\langle g,\mu_t\rangle = \langle T_tg, \mu\rangle$ for $g\in C_\Delta(E)$. The positivity  implies that $\mu_t$ is non-negative. Indeed,  $\langle T_tg, \mu\rangle \geq 0$ for all $g \geq 0$, whence $\mu_t$ is a non-negative measure.\\

Therefore, assuming henceforth that $\mu\le\nu_*$, it follows that $\nu_*+\mu_t-\mu$ is a non-negative measure.
Since $\|T_t g-g\|=O(t)$ for every $g\in D$, we have $\langle g,(\mu_t-\mu)^k\rangle=O(t^k)$ for every $g\in D^{\otimes k}$. Maximality of $\nu_*$ and Taylor's formula then yield 
\begin{align}
0 &\ge f(\nu_*+\mu_t-\mu) - f(\nu_*)= \langle\partial f(\nu_*),\mu_t-\mu\rangle + \frac12 \langle\partial^2 f(\nu_*),(\mu_t-\mu)^2\rangle+o(t^2) \nonumber \\
&=\langle(T_t-\text{id})\partial f(\nu_*),\mu\rangle   + \frac12 \langle(T_t\otimes T_t-2\,T_t\otimes\text{id}+\text{id}\otimes\text{id})\partial^2f(\nu_*),\mu^2\rangle+o(t^2). \label{IIL:KKT2:eq1}
\end{align}
We claim that both $A$ and $-A$ satisfy the positive minimum principle (see Definition~\ref{def1}) on $E^\Delta$. 
Indeed, for $0 \leq g\in D$ and $x\in E^\Delta$ with $g(x)=\inf_{E^\Delta}g=0$, we have
\begin{equation}\label{eq:minboth}
\begin{split}
Ag(x)&=\lim_{t \to 0}\frac{T_tg(x)-g(x)}{t}=\lim_{t \to 0}\frac{T_tg(x)}{t} \geq 0, \\ -Ag(x)&=\lim_{t \to 0}\frac{T_{-t}g(x)-g(x)}{t}=\lim_{t \to 0}\frac{T_{-t}g(x)}{t} \geq 0; 
\end{split}
\end{equation}
which can just be seen as a consequence of \cite[Theorem B.II.16]{arendt:86}.
Since $-\partial f(\nu^*) \geq 0$ and $\partial_x f(\nu_*)=0$ for all $x\in\supp(\nu^*)$ due to Theorem \ref{th:firstsecondorder}, it follows that $A(\partial f(\nu_*))(x) = 0$ for all such $x$.
As a result, using that $\supp(\mu)\subseteq\supp(\nu_*)$ and that the domain of $A$ contains $A(D)$, we get
\begin{equation} \label{IIL:KKT2:eq2}
\langle(T_t-\text{id})\partial f(\nu_*),\mu\rangle = \langle(T_t-\text{id}-tA)\partial f(\nu_*),\mu\rangle = \frac12t^2\langle A^2( \partial f(\nu_*)),\mu\rangle+ o(t^2).
\end{equation}
Furthermore, using that
\[
(T_t\otimes T_t-2\,T_t\otimes\text{id}+\text{id}\otimes\text{id})(g\otimes g)=(T_tg-g)\otimes(T_tg-g)
\]
for all $g\in D$, we deduce that
\begin{equation} \label{IIL:KKT2:eq3}
\langle(T_t\otimes T_t-2\,T_t\otimes\text{id}+\text{id}\otimes\text{id})g,\mu^2\rangle = t^2 \langle (A\otimes A)g,\mu^2\rangle + o(t^2)
\end{equation}
for all $g\in D\otimes D$. Inserting \eqref{IIL:KKT2:eq2} and \eqref{IIL:KKT2:eq3} into \eqref{IIL:KKT2:eq1}, dividing by $t^2$, and sending $t$ to zero yields
\[
0 \ge \frac12 \langle A^2(\partial f(\nu_*)),\mu\rangle + \frac12 \langle (A\otimes A)\partial^2f(\nu_*),\mu^2\rangle.
\]
This completes the proof.
\end{proof}
The optimality condition obtained in Theorem~\ref{IIL:KKT2} involves the generator of a strongly continuous positive group and is more subtle than the previous ones. In particular in the finite-dimensional case the generator of a strongly continuous positive group can only be a diagonal matrix, see Lemma~\ref{lem:diagonal}. For the properties of such an operator in the general case, we refer to Remark~\ref{rem:genposgroup} and Appendix~\ref{app:C}.

 \begin{remark}\label{IIrem7}
We claim that for $A$ as in Theorem~\ref{IIL:KKT2}, the operator $A^2$ satisfies the positive minimum principle on $E^\Delta$. Indeed, let $g\in D$ and $x\in E^\Delta$ with $g(x)=\inf_{E^\Delta}g=0$. Then, as $A$ and $-A$ satisfy the positive minimum principle  $Ag(x)=0$. Hence $A^2g(x) = \lim_{t\downarrow0}(T_tg(x)-g(x)-Af(x))/t\geq 0$, which proves the claim.
\end{remark}

The following lemma illustrates how the conditions of  Theorem~\ref{IIL:KKT2} translate to the case where $D=C(E^{\Delta})$. This includes in particular the finite dimensional case when $E$ consists of $n \in \mathbb{N}$ points. 

\begin{lemma} \label{lem:diagonal}\phantomsection
\begin{enumerate}
\item\label{it2i} Let $A: C(E^{\Delta}) \to C(E^{\Delta}) $ be the generator of a strongly continuous positive group, then 
\[
Ag(x)=a(x)g(x)
\]
for some $a \in C_{\Delta}(E)$. Moreover $(A \otimes A)g(x,y)=a(x)a(y)g(x,y)$ for $g \in \widehat{C}_{\Delta}(E^2)$.

\item\label{it2ii}
Let $E$ consist of $n \in \mathbb{N}$ points 
and let $A$ be the generator of a strongly continuous positive group on $C_\Delta(E)\cong \mathbb{R}^n$. Then $A$ is a diagonal matrix.
\end{enumerate}
\end{lemma}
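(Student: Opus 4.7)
The plan is to exploit the positive minimum principle for both $A$ and $-A$ (as already observed in equation~\eqref{eq:minboth}), and to show that this forces $A$ to act as pointwise multiplication by the function $a := A\mathbf{1}$.

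First I would observe that since the assumption is that $A$ is everywhere defined on $C(E^\Delta)$ as a generator, $A$ is automatically bounded by the closed graph theorem (every closed operator with full domain on a Banach space is bounded). This is what makes all the subsequent manipulations legitimate and in particular guarantees that $g^+$ and $g^-$ belong to the domain of $A$ for every $g \in C(E^\Delta)$. Then, exactly as in \eqref{eq:minboth}, both $A$ and $-A$ satisfy the positive minimum principle on $E^\Delta$, so for any $g \geq 0$ in $C(E^\Delta)$ with $g(x) = \inf_{E^\Delta} g = 0$ we must have $Ag(x) = 0$.

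The key localization step is to upgrade this to: whenever $g(x) = 0$ (without sign restriction) we have $Ag(x) = 0$. Since $A$ is bounded on all of $C(E^\Delta)$, I may decompose $g = g^+ - g^-$, and both $g^\pm \in C(E^\Delta)$ are non-negative and attain the value $0$ at $x$, so the previous observation gives $Ag^\pm(x) = 0$ and hence $Ag(x) = 0$ by linearity. Setting $a(x) := A\mathbf{1}(x)$, which belongs to $C(E^\Delta) = C_\Delta(E)$, I can write every $g \in C(E^\Delta)$ as $g = g(x)\mathbf{1} + (g - g(x)\mathbf{1})$ and apply the localization to the bracketed term to conclude
\[
Ag(x) = g(x)\, A\mathbf{1}(x) + 0 = a(x)\, g(x).
\]

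For the tensor identity in \ref{it2i}, by the defining relation $(A\otimes A)(g\otimes g) = (Ag)\otimes(Ag)$ combined with the multiplicative formula just established, I get $(A\otimes A)(g\otimes g)(x,y) = a(x)a(y)\,g(x)g(y)$ on rank-one symmetric tensors; by linearity this gives the pointwise identity on $D\otimes D$ and, since $A\otimes A$ extends continuously (bounded $A$) and $D\otimes D$ is dense in $\widehat C_\Delta(E^2)$, on the whole of $\widehat C_\Delta(E^2)$. Part \ref{it2ii} is then immediate: when $E$ consists of $n$ points, $C_\Delta(E) \cong \mathbb{R}^n$ and multiplication by $a$ is represented by the diagonal matrix $\diag(a(1),\ldots,a(n))$. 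The only genuine obstacle is the localization argument, which is why the hypothesis that $D(A) = C(E^\Delta)$ is essential — without it, $g^+$ and $g^-$ need not lie in the domain, and generators with smaller domains (e.g.\ first-order transport operators) can also generate positive groups but are not multiplication operators.
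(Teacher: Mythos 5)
Your proof is correct and follows essentially the same route as the paper's: both arguments reduce to showing that $Ag(x)$ depends on $g$ only through $g(x)$, via the decomposition of $g-g(x)\mathbf{1}$ into its positive and negative parts and the vanishing of $A$ on non-negative functions that are zero at $x$. The only (harmless) differences are that you obtain boundedness from the closed graph theorem and the pointwise vanishing directly from the positive minimum principle for $\pm A$ as in \eqref{eq:minboth}, whereas the paper cites Corollary~B.II.1.12 of \cite{arendt:86} for boundedness together with the sandwich bound $-\|A\|g\le Ag\le\|A\|g$ for $g\ge 0$.
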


\begin{proof}
As shown in equation \eqref{eq:minboth}, $A$ and $-A$ satisfy the positive minimum principle. Moreover, as $D=C(E^{\Delta})$, Corollary~B.II.1.12 in \cite{arendt:86} yields that $A$ is bounded and that
\begin{equation}\label{eqn6}
-\| A \| g\leq Ag  \leq \| A \| g , \quad \text{for all } g \geq 0,
\end{equation}
where $\|A\|$ denotes the operator norm.
Observe that \eqref{eqn6}  yields
$$A((g-g(x))^+)(x)=A((g-g(x))^-)(x)= 0,$$
showing that $Ag(x)$ can only depend on $g(x)$. The statement  follows from the fact that $A$ is a bounded linear operator. The form of $(A \otimes A)g(x,y)$ is then obvious.

The second part  directly follows from the first one. Let us underline that a matrix in $\mathbb{R}^{n\times n}$ satisfies the positive minimum principle if and only if the off-diagonal elements are non-negative; see \cite[Example B.II.1.4]{arendt:86}. Since
both $A$ and $-A$ satisfy the positive minimum principle on $E$, we conclude that $A$ is a diagonal matrix. 
\end{proof}

\begin{remark}\label{rem:genposgroup}
Generators of strongly continuous positive groups on $C_{\Delta}(E)$ have been fully characterized in \cite[Theorem B.II.3.14]{arendt:86}. In  Appendix \ref{app:C} we recall the main tools behind their characterization by introducing definitions such as flows and cocycles.

\end{remark}

\section{Polynomial operators} \label{sec_pol_op}

Recall that $E$ is a locally compact Polish space and $
D \subseteq C_\Delta (E)
$
is a dense linear subspace containing the constant function 1. Set $\Dcal$ as in \eqref{eq:domain}. We now define polynomial operators, which constitute a class of possibly unbounded linear operators acting  on cylindrical functions and polynomials.  For the moment we  define these polynomial operators for polynomials on general subsets $ \s \subseteq M(E)$, similarly as in Section 4 of \cite{CLS:19}. To avoid unnecessary loss of generality we work with the set $\Mcal(M(E))$ of measurable real valued maps on $M(E)$.

\begin{remark}\label{rem:terminology}
In this section, and also more generally, we often work just with subsets of $M(E)$ despite the fact that the results hold also for subsets of $M(E^{\Delta})$. This is due to the fact that the second can be deduced from the first by means of a direct argument. Whenever this is not the case, we will explicitly state it.  
\end{remark}

\begin{definition}\label{IIdef1}
Fix $\s\subseteq M(E)$. A linear operator  $L:\Dcal\to \Mcal(M(E))$ is called $\s$-polynomial if
$L$ maps $P^D$ to  $P$  such that for every $p\in P^D$ there is some $q\in P$ with $q|_{\s}=Lp|_{\s}$ and
$\deg(q) \le \deg(p).$
\end{definition}

\subsection{Characterization of polynomial operators in the diffusion setting}

For a linear operator $L$,  the associated \emph{carr\'e-du-champ operator} (see, e.g., \cite{BGL:13}) is the symmetric bilinear map $\Gamma:\mathcal{D} \times \mathcal{D}\to \Mcal(M(E))$ 
\begin{equation} \label{IIeq:Gamma}
\Gamma(p,q)=L(pq)-pLq-qLp.
\end{equation}

Related to the carr\'e-du-champ operator is the notion of a \emph{derivation}, which we recall in the following definition and which can be used to characterize the form of $L$, as a second-order differential operator, typical for diffusion processes.

\begin{definition}\label{def:derivation}
Fix $\s\subseteq M(E)$. A symmetric bilinear map $\Gamma:\mathcal{D} \times \mathcal{D}\to \Mcal(M(E))$ is called an $\s$-derivation if for all $p,q,r\in \mathcal{D}$, $\Gamma(pq,r)=p\Gamma(q,r)+q\Gamma(p,r)$ on $\s$.
\end{definition}

\begin{remark}
As explained in Remark \ref{rem:terminology}
we consider  only subsets of $M(E)$ and not of $M(E^{\Delta})$. Hence for finite measure we shall only speak of $M_+(E)$-derivations (even if $\mathcal{S}=M_+(E^{\Delta})$).
\end{remark}

\begin{lemma}\label{lem2}
The carr\'e-du-champs operator $\Gamma$ of a linear operator $L:\mathcal{D}\to \Mcal(M(E))$ is an $M_+(E)$-derivation if and only if
$Lf(\nu)=0$ for each $f$ such that $\partial f(\nu)=0$ and $\partial^2f(\nu)=0$.
\end{lemma}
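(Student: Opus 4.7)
The plan is to rewrite the derivation property as the pointwise vanishing of the symmetric trilinear remainder
\[
R(p,q,r) := L(pqr) - r\,L(pq) - p\,L(qr) - q\,L(pr) + pq\,Lr + pr\,Lq + qr\,Lp,
\]
which by direct expansion equals $\Gamma(pq,r)-p\,\Gamma(q,r)-q\,\Gamma(p,r)$. So $\Gamma$ being an $M_+(E)$-derivation is equivalent to $R(p,q,r)(\nu)=0$ for all $p,q,r\in\Dcal$ and $\nu\in M_+(E)$. All operations used below stay inside $\Dcal$: $P^D$ is an algebra, and multiplying a polynomial with a $C_0^\infty$-cylindrical function preserves compact support and hence stays in $F^D$.

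For the direction ``$\Leftarrow$'', I apply the hypothesis first to $f=1$: since $\partial 1\equiv0$ and $\partial^2 1\equiv0$, we obtain $L1\equiv 0$. Given $p,q,r\in\Dcal$ and $\nu\in M_+(E)$, set $\tilde p:=p-p(\nu)$, $\tilde q:=q-q(\nu)$, $\tilde r:=r-r(\nu)$; these all vanish at $\nu$. By the Leibniz rule, every term of $\partial(\tilde p\tilde q\tilde r)(\nu)$ and of $\partial^2(\tilde p\tilde q\tilde r)(\nu)$ carries at least one vanishing factor and hence equals zero. The hypothesis thus gives $L(\tilde p\tilde q\tilde r)(\nu)=0$, and expanding by linearity while using $L1=0$ recognizes this expression as exactly $R(p,q,r)(\nu)$.

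For the direction ``$\Rightarrow$'', note that $R(1,1,1)=L1$, so the derivation property already forces $L1=0$. Fix $\nu_0\in M_+(E)$ and $f\in\Dcal$ with $\partial f(\nu_0)=\partial^2 f(\nu_0)=0$; subtracting $f(\nu_0)\cdot 1$ I may assume $f(\nu_0)=0$. Decompose $f=f_1+f_2$ with $f_1\in P^D$ and $f_2\in F^D$. After combining the generating systems of $f_1$ and $f_2$ and passing to a linearly independent basis $g_1,\ldots,g_m\in D$, I can write $f_1=Q(\langle g_1,\nu\rangle,\ldots,\langle g_m,\nu\rangle)$ and $f_2=\phi(\langle g_1,\nu\rangle,\ldots,\langle g_m,\nu\rangle)$ for some polynomial $Q$ and some $\phi\in C_0^\infty(\R^m)$. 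Linear independence and the three vanishing conditions on $f$ translate, componentwise, into $Q$ and $\phi$ each vanishing to order three at $y^0:=(\langle g_i,\nu_0\rangle)_i$. Hadamard's lemma then gives $Q(y)=\sum_{|\alpha|=3}(y-y^0)^\alpha Q_\alpha(y)$ with polynomials $Q_\alpha$ and $\phi(y)=\sum_{|\alpha|=3}(y-y^0)^\alpha\psi_\alpha(y)$ with smooth $\psi_\alpha$. Multiplying by a cutoff $\chi\in C_0^\infty(\R^m)$ that equals $1$ on $\supp\phi$ replaces each $\psi_\alpha$ by $\chi\psi_\alpha\in C_0^\infty(\R^m)$ without altering $\phi$. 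Setting $h_i(\nu):=\langle g_i,\nu\rangle-y_i^0\in P^D$, every summand of $f$ then takes the form $h_{i_1}\cdot h_{i_2}\cdot(h_{i_3}\,\theta)$ where $\theta$ is either polynomial or $C_0^\infty$; this is a product $A\cdot B\cdot C$ of three elements of $\Dcal$ each vanishing at $\nu_0$. The derivation property thus specializes to $R(A,B,C)(\nu_0)=L(ABC)(\nu_0)=0$ on each summand, and summing yields $Lf(\nu_0)=0$.

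The main obstacle lies in ``$\Rightarrow$'' and specifically in keeping the Hadamard decomposition inside $\Dcal$ when $f\in F^D$: the remainders $\psi_\alpha$ are only smooth, not compactly supported, so that without the cutoff $\chi$ the factorization of $f$ would leave $\Dcal$ and the derivation property could not be invoked. The cutoff restores compact support, places each triple product back into $F^D\subseteq\Dcal$, and the argument then closes cleanly.
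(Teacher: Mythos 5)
Your overall strategy is the same as the paper's: both directions are reduced to the trilinear identity $R(p,q,r)(\nu)=L\bigl((p-p(\nu))(q-q(\nu))(r-r(\nu))\bigr)(\nu)+p(\nu)q(\nu)r(\nu)L1(\nu)$, and the converse direction is obtained by factoring a function whose first and second derivatives vanish at $\nu_0$ into a sum of triple products of elements of $\Dcal$ each vanishing at $\nu_0$ (the paper uses a second--order Taylor expansion with remainder, you use a third--order Hadamard decomposition). Your cutoff $\chi$ addresses a genuine point that the paper passes over silently -- the Taylor/Hadamard remainders of a $C_0^\infty$ function are smooth but not compactly supported, hence not obviously in $\Dcal$ -- and that part of your argument is correct.

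There is, however, a gap in the step ``linear independence and the three vanishing conditions on $f$ translate, componentwise, into $Q$ and $\phi$ each vanishing to order three at $y^0$.'' The hypotheses on $f$ only force the \emph{sum} $Q+\phi$ to have vanishing value, gradient and Hessian at $y^0$; the two summands can cancel. For instance, with $m=1$, $Q(y)=y-y^0$ and $\phi(y)=-(y-y^0)\chi(y)$ for a cutoff $\chi$ equal to $1$ near $y^0$, the resulting $f$ satisfies $f(\nu_0)=\partial f(\nu_0)=\partial^2 f(\nu_0)=0$ while neither $Q$ nor $\phi$ vanishes to order three. For such mixed $f$ you can neither apply Hadamard to $Q$ and $\phi$ separately, nor to $Q+\phi$ directly (its remainders are neither polynomial nor compactly supported, so the triple products leave $\Dcal$ -- exactly the obstruction your cutoff was designed to avoid). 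The gap is repairable: let $T$ be the second--order Taylor polynomial of $\phi$ at $y^0$ and write $Q+\phi=(Q+T)+(\phi-\chi T)+(\chi-1)T$ with $\chi\in C_0^\infty$ equal to $1$ near $y^0$. Then $Q+T$ is a polynomial vanishing to order three (handled inside $P^D$), $\phi-\chi T$ is a $C_0^\infty$ function vanishing to order three (handled by your Hadamard--plus--cutoff argument), and $(\chi-1)T$ vanishes identically near $y^0$, so it equals $(1-\rho)\cdot(1-\rho)\cdot\bigl((1-\rho)(\chi-1)T\bigr)$ for a further cutoff $\rho$ supported where $\chi=1$ and equal to $1$ near $y^0$; this is again a triple product in $\Dcal$ of factors vanishing at $\nu_0$. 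As written, though, your proof does not cover general elements of $\Dcal$ that genuinely mix a polynomial and a compactly supported part.
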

\begin{proof}
Observe first that if
$\Gamma$ is an $M_+(E)$-derivation, it holds  $\Gamma(1, 1) = 2\Gamma(1, 1)$, hence $L(1)=-\Gamma(1, 1) = 0$. Since for each $p,q,r\in \Dcal$ and $\nu\in M_+(E)$ it holds
\begin{align*}
   \Gamma(pq,r)(\nu)&-p(\nu)\Gamma(q,r)(\nu)-q(\nu)\Gamma(p,r)(\nu)-p(\nu)q(\nu)r(\nu)L1(\nu)\\
   &=L\Big((p-p(\nu))(q-q(\nu))(r-r(\nu))\Big)(\nu),
\end{align*}
we get that $\Gamma$ is a derivation if and only if $L1=0$ and
\begin{equation}\label{eqn5}
L\Big((p-p(\nu))(q-q(\nu))(r-r(\nu))\Big)(\nu)=0,
\end{equation}
for each $p,q,r\in \Dcal$ and $\nu\in M_+(E)$. Observe now that each $p\in \Dcal$ admits the representation $$p(\nu)=\phi(\langle g_1,\nu\rangle, \ldots, \langle g_n,\nu\rangle)$$
for some $\phi\in C^\infty(\R^n)$ and some linearly independent $g_1,\ldots,g_n\in D$. Assuming that $\partial p(\nu)= 0$ and $\partial^2p(\nu)= 0$ and applying Taylor's theorem to $\phi$ yields
$$p(\mu)=\sum_{k_1,k_2=1}^n\langle g_{k_1},\mu-\nu\rangle\langle g_{k_2},\mu-\nu\rangle\tilde p_{k_1,k_2}(\mu)$$
for some $\tilde p_{k_1,k_2}\in \Dcal$ such that $\tilde p_{k_1,k_2}(\nu)=0$. The claim then follows by \eqref{eqn5}.
\end{proof}

For a finite-dimensional diffusion it is known that its generator is polynomial if and only if the drift and diffusion coefficients are polynomial of first and second degree, respectively,  see~\cite{CKT:12} and \cite{FL:16}. The following result is the generalization of this fact to the measure-valued setting. 

\begin{theorem}\label{Lpol}
A linear operator $L:\mathcal{D}\to \Mcal(M(E))$ is $M_+(E)$-polynomial and its carr\'e-du-champ operator $\Gamma$ is an $M_+(E)$-derivation if and only if $L$ admits a representation
\begin{align}
Lf(\nu) = &B_0(\partial f(\nu)) + \langle B_1(\partial f(\nu)),\nu\rangle\label{eq:op}\\
& + \frac{1}{2} \Big(Q_0(\partial^2 f(\nu)) + \langle Q_1(\partial^2 f(\nu)), \nu\rangle + \langle Q_2(\partial^2 f(\nu)), \nu^2\rangle\Big), \quad f\in \Dcal,  \nu\in M_+(E)\notag
\end{align}
for some linear operators $B_0:D\to\R$, $B_1:D\to  C_\Delta(E)$, $Q_0:D\otimes D\to\R$, $Q_1:D\otimes D\to C_\Delta(E)$, $Q_2:D\otimes D\to \widehat C_\Delta(E^2)$. These operators are uniquely determined by $L$.
\end{theorem}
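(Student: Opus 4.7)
The plan is to prove both directions together with uniqueness. The sufficiency is a direct verification: given the representation \eqref{eq:op}, every term vanishes whenever $\partial f(\nu) = 0$ and $\partial^2 f(\nu) = 0$, so Lemma~\ref{lem2} yields that $\Gamma$ is an $M_+(E)$-derivation. For $M_+(E)$-polynomiality I would write $p \in P^D$ of degree $m$ as $\sum_{k=0}^m \langle G_k, \nu^k\rangle$ with $G_k \in D^{\otimes k}$, observe that $\partial p(\nu)$ and $\partial^2 p(\nu)$ are polynomials in $\nu$ of degrees at most $m-1$ and $m-2$ with values in $D$ and $D^{\otimes 2}$ respectively, and note that the subsequent pairings with $\nu$ and $\nu^2$ raise the total degree by at most two.

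For necessity, the derivation property forces $L1 = 0$ (as already noted in the proof of Lemma~\ref{lem2}). Given $f = \phi(\langle g_1, \cdot\rangle, \ldots, \langle g_n, \cdot\rangle) \in \Dcal$ and $\nu \in M_+(E)$, Taylor's theorem applied to $\phi$ around $(\langle g_i, \nu\rangle)_i$ to second order with remainder produces the decomposition
\begin{equation*}
f = f(\nu) + \langle \partial f(\nu), \cdot - \nu\rangle + \frac{1}{2}\langle \partial^2 f(\nu), (\cdot - \nu)^2\rangle + R,
\end{equation*}
with $R \in \Dcal$ satisfying $R(\nu) = \partial R(\nu) = \partial^2 R(\nu) = 0$. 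Evaluating $L$ at $\nu$, and using $L1 = 0$ together with Lemma~\ref{lem2} to eliminate $LR(\nu)$, reduces the problem to computing $L\langle g, \cdot - \nu\rangle(\nu)$ for $g \in D$ and $L\langle H, (\cdot - \nu)^2\rangle(\nu)$ for $H \in D^{\otimes 2}$. The polynomial property applied to $\langle g,\cdot\rangle$ gives, by uniqueness of the polynomial representation (Corollary~2.4 of \cite{CLS:19}), a unique decomposition $L\langle g,\cdot\rangle(\nu) = B_0(g) + \langle B_1(g), \nu\rangle$ with $B_0(g) \in \R$, $B_1(g) \in C_\Delta(E)$, both linear in $g$ by linearity of $L$. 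For the quadratic part I would restrict to the spanning set $\{g\otimes g : g \in D\}$ of $D^{\otimes 2}$: the same argument applied to $\langle g,\cdot\rangle^2$ gives $L\langle g,\cdot\rangle^2(\nu) = \alpha_0(g) + \langle \alpha_1(g), \nu\rangle + \langle \alpha_2(g), \nu^2\rangle$, and expanding $\langle g\otimes g, (\cdot - \nu)^2\rangle = \langle g, \cdot\rangle^2 - 2\langle g, \nu\rangle\langle g, \cdot\rangle + \langle g, \nu\rangle^2$ and combining with the first-order representation and $L1 = 0$ identifies
\begin{equation*}
Q_0(g\otimes g) := \alpha_0(g), \quad Q_1(g\otimes g) := \alpha_1(g) - 2 B_0(g)\,g, \quad Q_2(g\otimes g) := \alpha_2(g) - 2\,g \otimes B_1(g),
\end{equation*}
lying respectively in $\R$, $C_\Delta(E)$, $\widehat C_\Delta(E^2)$, which I then extend linearly to all of $D^{\otimes 2}$.

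The main delicate point will be verifying well-definedness of this linear extension, since the spanning representation of an element of $D^{\otimes 2}$ by pure tensors $g\otimes g$ is not unique. If $\sum_i c_i g_i \otimes g_i = 0$ as a function on $E^2$, then $\sum_i c_i \langle g_i, \mu\rangle^2 = 0$ for all $\mu \in M(E)$, and applying $L$ and invoking uniqueness of the polynomial representation forces $\sum_i c_i \alpha_j(g_i) = 0$ for $j = 0, 1, 2$. Moreover, fixing $y \in E$ turns $\sum_i c_i g_i(y)\, g_i$ into the zero element of $D$, so linearity of $B_0$ and $B_1$ gives the matching cancellations $\sum_i c_i B_0(g_i)\, g_i = 0$ and $\sum_i c_i g_i \otimes B_1(g_i) = 0$. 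Uniqueness of the five operators then follows by testing the representation \eqref{eq:op} against the specific functions $\langle g, \cdot\rangle$ and $\langle g, \cdot\rangle^2$ and invoking uniqueness of the polynomial representation once more.
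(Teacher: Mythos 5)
Your proof is correct. The paper itself disposes of this theorem in three lines: it cites Theorem~A.1 of \cite{CLS:19} for the statement restricted to $P^D$, notes that the forward implication extends directly to $\Dcal$, and handles the converse on $\Dcal$ by choosing, for each $f\in\Dcal$ and $\nu$, a $p\in P^D$ with $\partial(p-f)(\nu)=0$ and $\partial^2(p-f)(\nu)=0$ and invoking Lemma~\ref{lem2}. Your second-order Taylor expansion with remainder $R$ satisfying $R(\nu)=\partial R(\nu)=\partial^2R(\nu)=0$ is exactly that reduction in explicit form (the Taylor polynomial is the paper's $p$). Where you genuinely diverge is that you reconstruct the content of the cited Theorem~A.1 from scratch: reading off $B_0,B_1$ from $L\langle g,\fdot\rangle$ and $\alpha_0,\alpha_1,\alpha_2$ from $L\langle g,\fdot\rangle^2$ via the degree bound and the uniqueness of polynomial representations on $M_+(E)$, and then verifying that the linear extension of $Q_0,Q_1,Q_2$ from the pure tensors $g\otimes g$ to $D^{\otimes 2}$ is well defined. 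That last point is the genuinely delicate step, and your argument for it is sound: a relation $\sum_i c_i\,g_i\otimes g_i=0$ kills $\sum_i c_i\alpha_j(g_i)$ by applying $L$ to the zero polynomial $\sum_i c_i\langle g_i,\fdot\rangle^2$, and kills the correction terms $\sum_i c_i B_0(g_i)g_i$ and $\sum_i c_i\, g_i\otimes B_1(g_i)$ by freezing one tensor slot and using linearity of $B_0$ and $B_1$. The trade-off is transparency versus length: your version is self-contained and makes visible exactly where uniqueness of the coefficient representation on $M_+(E)$ (Corollary~2.4 of \cite{CLS:19}) is used, at the cost of reproving an external result; the paper's version is shorter but opaque without the reference. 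One cosmetic remark: in the sufficiency direction your phrase that the pairings ``raise the total degree by at most two'' is loose — the correct bookkeeping is that $\partial^k p(\nu)$ has degree $\deg(p)-k$ in $\nu$ and the pairing with $\nu^k$ restores exactly $k$, so each term of \eqref{eq:op} has degree at most $\deg(p)$ — but the conclusion you draw is the right one.
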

\begin{proof}
Theorem A.1 in \cite{CLS:19} yields the result when $\Dcal$ is replaced by $P^D$.  The extension of the first implication from $P^D$ to $\Dcal$ is then direct. For the converse implication, 
observe that for each $f\in \Dcal$ and $\nu\in M_+(E)$ there is a $p\in P^D$ such that $\partial (p-f)(\nu)=0$ and $\partial^2(p-f)(\nu)=0$. The claim then follows by Lemma~\ref{lem2}.
\end{proof}

\subsection{Dual operators}

 We would now like to associate to an $\Scal$-polynomial operator $L$ a family of so-called \emph{dual} operators  $(L_k)_{k \in \mathbb{N}}$ which are linear operators mapping the coefficients vector of $p$ to the coefficients vector of $Lp$. To this end we recall Definition 2.3 of \cite{CS:19}.

\begin{definition} \label{IIeq:L on Dk}
Fix  $m\in \N_0$ and let $L$ be an $\Scal$-polynomial operator. 
An \emph{$m$-th dual operator} corresponding to $L$ is a linear operator  $L_m: \bigoplus_{k=0}^m D^{\otimes k}\to
\bigoplus_{k=0}^m \widehat{C}_\Delta (E^k)$
such that $L_m\vec g =:(L_m^0\vec{g},\ldots, L_m^m\vec{g})$ satisfies
$$
Lp(\nu)= \sum_{k=0}^m \langle L^k_m \vec{g},\nu^k\rangle
\qquad \text{for all } \nu \in \Scal,
$$
where $p(\nu) = \sum_{k=0}^m \langle g_k,\nu^k\rangle$.
 Whenever $L_m$ is a closable operator\footnote{We refer to \cite[Chapter 1]{EK:09} for the precise definition.}, we still denote its closure  by $L_m: \mathcal{D}(L_m)\to \bigoplus_{k=0}^m \widehat{C}_\Delta (E^k)$ and its domain by $\mathcal{D}(L_m)\subseteq \bigoplus_{k=0}^m \widehat{C}_\Delta (E^k)$. 
\end{definition}

If $\mathcal{S}=M_+(E)$, then the dual operator is unique.
Indeed, in this case
the representation \eqref{IIeq:p(nu)} is unique, which is a consequence of Corollary~2.4 in \cite{CLS:19}, and we
can thus identify each polynomial with its coefficients vector. We therefore call it \emph{the} dual operator.
It is interesting to note that if $L$ satisfies \eqref{eq:op}, then 
\begin{align*}
L_m^m(g^{\otimes m})&=
mB_1(g)\otimes g^{\otimes(m-1)}+\frac {m(m-1)}2 Q_2(g\otimes g)\otimes g^{\otimes (m-2)},\\
L_m^{m-1}(g^{\otimes m})&=mB_0(g) g^{\otimes(m-1)}+\frac {m(m-1)}2 Q_1(g\otimes g)\otimes g^{\otimes (m-2)},
\end{align*}
$L_m^{m-2}(g^{\otimes m})=\frac {m(m-1)}2 Q_0(g\otimes g)g^{\otimes (m-2)}$, and $L_m^k(g^{\otimes m})=0$ for each $k< m-2$.

\section{Moment formula and existence of polynomial diffusions on $M_+(E)$} \label{sec_ex}

Let $L$ be a linear operator acting on $\mathcal{D}$ for $\Dcal$ specified  in \eqref{eq:domain}. In this section we study existence and uniqueness of non-negative measure valued polynomial diffusions which we introduce via the martingale problem.
In the following let $\mathcal{S}$ stand either for $M_+^{\mathfrak{\Delta}}(E^{\Delta})$, $M_+(E^{\Delta})$ or $M_+(E)$.

An $\mathcal{S}$-valued process $X$ with c\`adl\`ag paths defined on some filtered probability space $(\Omega, \Fcal,(\Fcal_t)_{t\geq0}, \P)$ is called an $\mathcal{S}$-valued {\em solution to the martingale problem for $L$} with initial condition $\nu\in \mathcal{S}$ if $X_0=\nu$ $\P$-a.s.~and
\begin{equation}\label{martprob}
N^f_t = f(X_t) - f(X_0) - \int_0^t Lf(X_s) ds
\end{equation}
defines a local martingale for every $f$ in the domain of $L$.
Uniqueness of solutions to the martingale problem is always understood in the sense of law. The martingale problem for $L$ is {\em well--posed} if for every initial condition $\nu \in \mathcal{S}$ there exists a unique $\mathcal{S}$-valued solution to the martingale problem for $L$ with initial condition~$\nu$.

We are mainly interested in $M_+(E)$-valued solutions with continuous paths (with respect to the topology of weak convergence) corresponding to polynomial operators.

\begin{definition}
Let $L$ be $M_+(E)$-polynomial. Any continuous $M_+(E)$-valued solution to the martingale problem for $L$ is called a
\emph{measure-valued polynomial diffusion}.
\end{definition}

The following lemma relates path continuity of $M_+(E)$-and $M_+(E^{\Delta})$-valued solutions to the martingale problem to the carr\'e-du-champ operator being a derivation. This explains why we consider derivations in Theorem~\ref{Lpol}.

\begin{lemma}\label{IIlem5}
If the carr\'e-du-champ operator $\Gamma$ associated to $L$ is an $M_+(E)$-derivation, then any $M_+(E)$- or $M_+(E^{\Delta})$-valued solution to the martingale problem for $L$ has continuous paths and  the corresponding quadratic covariation structure is given by
$$d[N^f,N^g]_t=2\Gamma(f,g)(X_t)dt,$$
for every $f,g\in \Dcal$.
 
Conversely, if for every initial condition $\nu\in M_+(E)$ (or $M_+(E^{\Delta})$ respectively) there is an $M_+(E)$- (or $M_+(E^{\Delta})$ respectively) valued solution to the martingale problem for $L$ with continuous paths, then the carr\'e-du-champ operator $\Gamma$ associated to $L$ is an $M_+(E)$-derivation.
\end{lemma}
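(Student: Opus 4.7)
I would prove the two directions separately, with the main obstacle being the continuity argument in the forward direction.

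\emph{Forward direction.} Suppose $\Gamma$ is an $M_+(E)$-derivation and let $X$ solve the martingale problem on $M_+(E)$ or $M_+(E^\Delta)$. First I verify that $\Dcal=\operatorname{span}\{P^D,F^D\}$ is an algebra: products within $P^D$ remain in $P^D$, products within $F^D$ remain in $F^D$ since $C_0^\infty(\R^m)$ is closed under multiplication, and mixed products land in $F^D$. Hence $fg\in\Dcal$ for all $f,g\in\Dcal$. Writing the canonical semimartingale decomposition $f(X_t)=f(X_0)+\int_0^t Lf(X_s)\,ds+N^f_t$ with $N^f$ a local martingale, integration by parts on $f(X)g(X)$ combined with the martingale problem applied to $fg$ yields
\[
[N^f,N^g]_t-\int_0^t\Gamma(f,g)(X_s)\,ds \;=\; \text{local martingale}.
\]

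The core step is to conclude that $N^f$ has no jumps. The derivation property gives $\Gamma(f,f^2)=2f\Gamma(f,f)$ and hence $L(f^3)=3f^2 Lf+3f\,\Gamma(f,f)$. Applying Itô's formula to the cadlag semimartingale $f(X_t)^3$ (noting that $\Delta f(X_s)=\Delta N^f_s$ since the drift is continuous) and comparing with the martingale problem for $f^3\in\Dcal$, all $Lf$-terms cancel, and the $\Gamma(f,f)$-terms cancel after replacing $[N^f]$ by the identity above, leaving
\[
\sum_{s\leq t}(\Delta N^f_s)^3 \;=\; \text{local martingale}.
\]
This is a pure-jump finite-variation process, and its predictable compensator is $\int_0^t\!\!\int y^3\,\nu^{N^f}(ds,dy)$, where $\nu^{N^f}$ is the predictable compensator of the jump measure of $N^f$. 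Since a predictable finite-variation local martingale starting at $0$ must vanish, this forces $y^3=0$ $\nu^{N^f}$-a.e., and hence $\nu^{N^f}=0$, so $\Delta N^f\equiv 0$. (Localization via $\tau_n=\inf\{t:\|X_t\|>n\}$ supplies the integrability for the compensator manipulations.) Consequently $f(X_\cdot)$ is continuous for every $f\in\Dcal$, and in particular $\langle g,X_\cdot\rangle$ is continuous for each $g\in D$. Density of $D$ in $C_\Delta(E)$, combined with local weak relative compactness of cadlag trajectories in the locally compact space $M_+(E^\Delta)$, upgrades this to weak continuity of $X$. With continuity established, the local martingale in the first display is a continuous finite-variation process, hence identically zero, giving the claimed quadratic-covariation formula.

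\emph{Converse direction.} Assume that for every initial condition a continuous solution exists. For any $f\in\Dcal$, $N^f$ is then a continuous local martingale, so $[N^f,N^g]$ is continuous and of finite variation. The same integration-by-parts argument now gives the exact identity $[N^f,N^g]_t=\int_0^t\Gamma(f,g)(X_s)\,ds$. Moreover, from continuity of $X$ one obtains the representation $N^{fg}_t=\int_0^t f(X_s)\,dN^g_s+\int_0^t g(X_s)\,dN^f_s$, and so the derivation rule for quadratic covariations yields $d[N^{fg},N^h]_t=f(X_t)\,d[N^g,N^h]_t+g(X_t)\,d[N^f,N^h]_t$. Substituting the formula for each covariation gives
\[
\Gamma(fg,h)(X_t)=f(X_t)\Gamma(g,h)(X_t)+g(X_t)\Gamma(f,h)(X_t)
\]
for $dt\otimes d\P$-a.e. $(t,\omega)$. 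Choosing a sequence $t_n\downarrow 0$ along which the identity holds and sending $n\to\infty$ along the continuous path of $X$ transfers the identity to $\nu=X_0$, using continuity of $\nu\mapsto\Gamma(p,q)(\nu)$ on $\Dcal$ available in the martingale-problem setting. Since $\nu$ is an arbitrary initial condition in $M_+(E)$, $\Gamma$ is an $M_+(E)$-derivation.
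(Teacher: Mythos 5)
The paper itself disposes of both directions in a few lines by citing Propositions 1 and 2 of \cite{BE:85}: under the derivation property the real-valued processes $f(X)$ are continuous for every $f\in\Dcal$, and density of $D$ in $C_\Delta(E)$ then gives weak continuity of $X$; the converse is Proposition 1 there. You instead reprove the Bakry--\'Emery result from scratch. Your converse direction and your reduction of the forward direction to showing $\Delta N^f\equiv 0$ are fine, but the jump-elimination step contains a genuine gap.

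Concretely: from $L(f^3)=3f^2Lf+3f\Gamma(f,f)$ you correctly deduce that $A_t:=\sum_{s\le t}(\Delta N^f_s)^3$ is a finite-variation local martingale, hence that its predictable compensator $\int_0^t\!\int y^3\,\nu^{N^f}(ds,dy)$ vanishes. But $y\mapsto y^3$ changes sign, so the vanishing of $\int y^3\,\nu^{N^f}$ does \emph{not} force $\nu^{N^f}=0$: a compensated compound Poisson local martingale with symmetric jump law passes this test, since for it $\sum_{s\le t}(\Delta N_s)^3$ is itself a martingale. The cubic identity alone cannot exclude jumps. The standard repair is to go one order higher: the derivation property also yields $L(f^4)=4f^3Lf+6f^2\Gamma(f,f)$ (using $\Gamma(f^2,f^2)=4f^2\Gamma(f,f)$), and the same It\^o comparison for $f(X)^4$ --- together with the already established fact that $\sum_{s\le t}(\Delta N^f_s)^3$ is a local martingale, so that $\int_0^t f(X_{s-})\,d\bigl(\sum_{u\le s}(\Delta N^f_u)^3\bigr)$ is one as well --- shows that $\sum_{s\le t}(\Delta N^f_s)^4$ is a non-negative local martingale started at $0$, hence identically zero. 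This is precisely the content of Proposition 2 in \cite{BE:85}. With that fix, the remainder of your argument (density of $D$ to pass from continuity of $\langle g,X\rangle$ to weak continuity of $X$, identification of $[N^f,N^g]$, and the converse via $N^{fg}=\int f(X_{s-})\,dN^g_s+\int g(X_{s-})\,dN^f_s$ followed by evaluation at $t\downarrow 0$) goes through.
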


\begin{proof}
We here state the proof only for $M_+(E)$ since the $M_+(E^{\Delta})$-case follows by the same arguments extending $f \in \mathcal{D}$ to $M(E^{\Delta})$.

Let $X$ be an $M_+(E)$-valued solution to the martingale problem for $L$. By Proposition~2 in \cite{BE:85}, the real-valued process $f(X)$ is continuous for every $f\in \mathcal{D}$, in particular for every linear monomial $f(\nu)=\langle g,\nu\rangle$ with $g\in D$. Since $D$ is dense in $C_\Delta (E)$, we can conclude that $X$ is continuous with respect to the topology of weak convergence on $M_+(E)$.

Conversely, if $X$ is a $M_+(E)$-valued solution to the martingale problem for $L$ with continuous paths, then, by Lemma \ref{IIL:Psmooth} and Lemma \ref{Fcont}, the map $t\mapsto f(X_t)$ is continuous for all $f\in \mathcal{D}$. The result now follows by Proposition~1 in \cite{BE:85}.
\end{proof}

\subsection{Moment formula}\label{sec_moments_uniqueness}

Polynomial diffusions are of particular interest in many applications because they satisfy a {\em moment formula}, which allows to compute the process' moments in a tractable way. If $E$ is a finite set, the moment formula always holds, but technical conditions, in particular on the dual operators, are needed in the general case. 
We apply here Theorem 3.4 and Remark 3.20 of \cite{CS:19} (see also Example 3.21 and Lemma 3.2 in \cite{CS:19}) to get Theorem~\ref{thm1}.

Let $L$ be an $M_+(E)$-polynomial operator, fix  $m\in \N_0$, and let $L_m$ be the closable $m$-th dual operator corresponding to $L$ with domain $\mathcal{D}(L_m)$.
Before stating the theorem,  we extend the domain of $L$ to polynomials with coefficients in $\mathcal{D}(L_m)$ by setting
\begin{equation}\label{eqn21}
p_{\vec{g}}=\sum_{k=0}^m \langle g_k,\nu^k\rangle \quad \text{and} \quad Lp_{\vec{g}}=\sum_{k=0}^m \langle L^k_m \vec{g},\nu^k\rangle
\end{equation}
for all $\vec{g} \in \mathcal{D}(L_m)$ and $m\in \N_0$. 
As in the finite dimensional case the moment formula corresponds to a solution of a system of linear ODEs. Note that we shall usually call these differential equations ODEs, even though they correspond in many cases to PDEs or PIDEs depending on the underlying space and the specification of the operators.  In the current infinite dimensional setting we also need to make the solution concept precise. The following is an adaptation of Definition 3.3 in \cite{CS:19} to the current measure-valued setting.

\begin{definition}\label{def:sol}

We call  a function $t \mapsto \vec{g}_t$ 
with values in $\mathcal{D}(L_m)$  a solution of the $m+1$ dimensional system of ODEs
\[
\partial_t \vec{g}_t =  L_m \vec{g}_t , \quad \vec{g}_0=g,		
\]
if for every $t >0$ it holds
\begin{align}\label{eq:ODE}
\sum_{k=0}^m \langle g_{t,k},\nu^k\rangle=\sum_{k=0}^m \langle g_{0,k},\nu^k\rangle +\int_0^t \sum_{k=0}^m \langle L^k_m \vec{g}_s, \nu^k \rangle ds
\end{align}
for all $\nu \in M_+(E^{\Delta})$.
\end{definition}

\begin{remark}
Note that the above solution concept reduces to a more classical one if we take $\nu=\delta_{x_1} + \cdots +\delta_{x_k}$ with $x_i \in E$, $i=1, \ldots, k$ and $k =1, \ldots, m$. Indeed, by polarization \eqref{eq:ODE} can be transformed into
\begin{align*}
g_{t,0}&=g_{0,0}+ \int_0^t L_m^0\vec{g}_s ds\\
g_{t,1}(x_1)&=g_{0,1}(x_1)+ \int_0^t L_m^1\vec{g}_s(x_1) ds\\
&\vdots\\
g_{t,m}(x_1, \ldots, x_m)&=  g_{0,m}(x_1, \ldots, x_m) +\int_0^t L^m_m \vec{g}_s(x_1, \ldots, x_m) ds
\end{align*}
and thus reduces to a classical (except of the integral form) solution of a multivariate P(I)DE. 
\end{remark}

\begin{theorem}[Dual moment formula]\label{thm1}
Let $L$ be an $M_+(E)$-polynomial operator, fix  $m\in \N_0$, let $L_m$ be the  $m$-th dual operator corresponding to $L$, and assume that $L_m$ is closable with domain $\mathcal{D}(L_m)$. Suppose that an $M_+(E^{\Delta})$-valued solution $(X_t)_{t \geq 0} $ to the martingale problem for $L$ exists.
Fix a coefficients vector $\vec{g}=(g_0, \ldots, g_m)\in\mathcal{D}(L_m)$ and  suppose that the following condition holds true.
\begin{itemize}
\item 
There is a solution  in the sense of Definition \ref{def:sol} of the $m+1$ dimensional system  of linear ODEs on $[0,T]$ given by
\begin{equation}\label{ODE}
\partial_t \vec{g}_t =  L_m \vec{g}_t, \qquad 
\vec{g}_0=\vec{g}.					
\end{equation}
\end{itemize}
Then, for all $0\leq t\leq T$ the representation
\begin{equation*}
\mathbb{E}\left[  \sum_{k=0}^m \langle g_k, X_T^k \rangle \,\big|\, \mathcal{F}_t \right ]= \sum_{k=0}^m \langle g_{T-t,k}, X_t^k \rangle
\end{equation*}
 holds almost surely. 
\end{theorem}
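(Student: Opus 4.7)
The natural plan is to fix $T>0$ and show that
\[
M_t := \sum_{k=0}^m \langle g_{T-t,k}, X_t^k\rangle, \qquad t \in [0,T],
\]
is a martingale with respect to $(\mathcal{F}_t)$. Once this is established, the desired identity follows from $\mathbb{E}[M_T\mid \mathcal{F}_t]=M_t$, together with the initial condition $\vec g_0 = \vec g$ which gives $M_T=\sum_{k=0}^m \langle g_{0,k},X_T^k\rangle=\sum_{k=0}^m \langle g_k,X_T^k\rangle$, while $M_t$ is exactly the right-hand side of the assertion.

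The drift computation is transparent once one has a time-dependent version of the martingale problem: for a sufficiently regular family $s\mapsto f_s$ of test functions,
\[
M_t - M_0 = \int_0^t \big(\partial_s f_s + L f_s\big)(X_s)\, ds \;+\; \text{local martingale}.
\]
I apply this to $f_s(\nu):=\sum_{k=0}^m \langle g_{T-s,k},\nu^k\rangle$. The ODE \eqref{ODE} in the sense of Definition~\ref{def:sol} yields, evaluated pointwise in $\nu$,
\[
\partial_s f_s(\nu) = -\sum_{k=0}^m \langle L_m^k \vec g_{T-s}, \nu^k\rangle,
\]
while the extension \eqref{eqn21} of $L$ to polynomials with coefficients in $\mathcal{D}(L_m)$ gives
\[
L f_s(\nu) = \sum_{k=0}^m \langle L_m^k \vec g_{T-s}, \nu^k\rangle.
\]
The two terms cancel identically, so $M$ is a local martingale on $[0,T]$.

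The real technical burden is to make the two steps above rigorous. Because $\vec g_s$ lies in the closure $\mathcal{D}(L_m)$ and not in $\bigoplus_k D^{\otimes k}$, one must first extend the martingale problem for $L$ to polynomials with coefficients in $\mathcal{D}(L_m)$: approximate $\vec g_s$ by a sequence from $\bigoplus_k D^{\otimes k}$, apply the original martingale problem of \eqref{martprob}, and pass to the limit using a bp-$\lim$/closure argument component by component. Independently, the time-dependence must be dealt with by approximating $s \mapsto \vec g_s$ by piecewise constant coefficient paths, applying the time-homogeneous martingale problem on each subinterval, and passing to a uniform limit via the integral identity \eqref{eq:ODE} — this is the main obstacle, since the regularity on $\vec g_s$ is only what Definition~\ref{def:sol} supplies. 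Finally, to upgrade from local to true martingale on $[0,T]$ one localizes along stopping times based on the total mass of $X$, uses that $\|L_m^k \vec g_{T-s}\|$ is bounded for $s\in[0,T]$, and runs an iterative/Gronwall estimate to obtain uniform integrability of $\{M_t : t\in[0,T]\}$. The execution follows the template of \cite[Theorem~3.4 and Remark~3.20]{CS:19}, which the authors explicitly invoke and which transfers to the measure-valued setting with only cosmetic changes.
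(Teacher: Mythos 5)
Your proposal is correct and is essentially the argument the paper uses: the paper's proof simply invokes Theorem~3.4 of \cite{CS:19} together with Lemma~\ref{lem:martingality}, and your sketch (drift cancellation for $M_t=\sum_k\langle g_{T-t,k},X_t^k\rangle$ via the ODE, extension to coefficients in $\mathcal{D}(L_m)$, and the Gronwall-type moment bound upgrading the local martingale to a true one) is precisely what that cited theorem and Lemma~\ref{lem:martingality} supply. No substantive difference in approach.
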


\begin{proof}
This is simply a consequence of Theorem 3.4 in \cite{CS:19} and Lemma \ref{lem:martingality} below.
\end{proof}

\begin{example}\label{ex1}
Let $L$ be an $M_+(E)$-polynomial operator, fix  $m\in \N_0$, and let $L_m$ be the closable $m$-th dual operator corresponding to $L$. Assume that $L_m^0=0,\ldots, L_m^{m-1}=0$ and that $L_m^m|_{D^{\otimes m}}$ satisfies the positive minimum principle. If the closure $\overline L_m^m$ of $L_m^m$ generates a strongly continuous positive semigroup $(Y^{k}_t)_{t\geq0}$, then $\vec g_t:=(Y_t^kg) \vec e_k$ satisfies \eqref{ODE} for every $g$ in the domain of $\overline L_m^m|_{D^{\otimes m}}$. We provide now a simple example to illustrate this mechanism.

Consider the process given by $X_t=S_t\mu$ where $dS_t=\sigma S_tdW_t$, $S_0=1$, and $\mu\in M_+(E)$. Since $\langle g, X\rangle$ is a local martingale and
$$
\langle g, X_t\rangle^2=(S_t\langle g,\mu\rangle )^2
=\frac 1 2 \int_0^t\langle \sigma^22g\otimes g,X_s^2\rangle ds +\text{(local martingale)},
$$
the generator of $X$ is given by
$Lp(\nu)=\frac 1 2 \langle \sigma^2 \partial^2p(\nu),\nu^2\rangle.$
Observe that
$$L(\langle g, \,\cdot\,\rangle^n)(\nu)=\langle L_n^n g^{\otimes n},\nu^n\rangle,$$
where $L_n^n g^{\otimes n}=\sigma^2\frac {n(n-1)} 2g^{\otimes n}$ and note that $L_n^n$ is the generator of the semigroup given by
$$Y_tg^{\otimes n}(z):=\E[g^{\otimes n}(Z^{(n)}_t)\exp(\int_0^t m(Z^{(n)}_s)ds)|Z^{(n)}_0=z],$$
where $Z^{(n)}$ is the constant process (hence the process generated by 0) and $m(z)=\sigma^2\frac {n(n-1)} 2$. This in particular yields
$$\langle Y_tg^{\otimes n},X_0^n\rangle=\langle g, \mu\rangle^n \exp({\sigma^2 t}\frac{n(n-1)}2)
=\E[\langle g,\mu\rangle^n S_t^n]=\E[\langle g, X_t\rangle^n].$$
\end{example}

\begin{lemma}\label{lem:martingality}
Let $L$ be an $M_+(E)$-polynomial operator and suppose that an $M_+(E^{\Delta})$-valued solution $(X_t)_{t \geq 0 }$
to the martingale problem for $L$ exists.
\begin{enumerate}
\item\label{it3i}
Then, for every $p \in P^D$, 
\[
\mathbb{E}[p(X_t)^2] < Ce^{Ct},
\]
for some $C>0$.
Moreover, 
$N^p$ as defined in \eqref{martprob} is a true martingale. 

\item\label{it3ii} Moreover,  if there is a solution to \eqref{ODE} in the sense of Definition \ref{def:sol}, then 
Condition (ii) and (iii) of Theorem 3.4 in \cite{CS:19} also hold true.
\end{enumerate}
\end{lemma}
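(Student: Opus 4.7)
The plan for part \ref{it3i} is to first bound all polynomial moments of the total mass $\langle 1,X_t\rangle$ via a Gronwall-type argument, and then to deduce both the $L^2$-estimate on arbitrary $p\in P^D$ and the true-martingale property of $N^p$. Since $1\in D$, the function $\nu\mapsto\langle 1,\nu\rangle^{2n}$ belongs to $P^D$ for every $n\in\mathbb{N}$, so by the polynomial property of $L$ (cf.~Definition~\ref{IIdef1}), $L\langle 1,\cdot\rangle^{2n}$ is again a polynomial of degree at most $2n$; the crude bound $|\langle g_k,\nu^k\rangle|\le\|g_k\|\langle 1,\nu\rangle^k$ then yields $|L\langle 1,\cdot\rangle^{2n}(\nu)|\le C(1+\langle 1,\nu\rangle^{2n})$. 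I would localize via the càdlàg stopping times $\tau_R:=\inf\{t\ge 0:\langle 1,X_t\rangle\ge R\}$; under this stopping, the local martingale $N^{\langle 1,\cdot\rangle^{2n}}$ becomes bounded, hence a true martingale, so taking expectations, using the above bound, and applying Gronwall give $\mathbb{E}[\langle 1,X_{t\wedge\tau_R}\rangle^{2n}]\le(1+\langle 1,X_0\rangle^{2n})e^{Ct}$. Monotone convergence as $R\to\infty$ (valid because $X_t\in M_+(E^\Delta)$ is a finite measure a.s., so $\tau_R\nearrow\infty$) removes the localization.

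For a general $p\in P^D$ of degree $m$, the same crude estimate gives $p(\nu)^2\le C(1+\langle 1,\nu\rangle^{2m})$, and hence the first step delivers $\mathbb{E}[p(X_t)^2]\le C'e^{C't}$. To upgrade $N^p$ from local to true martingale, I would apply the decomposition
\[
|N^p_{t\wedge\tau_R}|^2\le 3\!\left(p(X_{t\wedge\tau_R})^2+p(X_0)^2+t\int_0^t (Lp(X_s))^2\,ds\right),
\]
observe that $Lp$ is again a polynomial of degree at most $m$, and conclude from the first step that each term on the right has expectation bounded uniformly in $R$. The family $\{N^p_{t\wedge\tau_R}\}_R$ is therefore $L^2$-bounded and in particular uniformly integrable, so the localized martingale identity passes to the unstopped limit.

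For part \ref{it3ii}, I would verify the outstanding hypotheses of \cite[Theorem~3.4]{CS:19} by showing that $M_t:=\sum_{k=0}^m\langle g_{T-t,k},X_t^k\rangle=p_{\vec g_{T-t}}(X_t)$ is a true martingale on $[0,T]$. The main step is local martingality: combining \eqref{ODE} in the integrated sense of Definition~\ref{def:sol} with the extended definition \eqref{eqn21} and applying the martingale problem to the time-frozen polynomial $p_{\vec g_{T-s}}$ for each $s$, a time-dependent Fubini/Leibniz computation shows that the compensator $\int_0^t Lp_{\vec g_{T-s}}(X_s)\,ds$ exactly cancels the bounded-variation term $-\int_0^t\sum_k\langle L_m^k\vec g_{T-s},X_s^k\rangle\,ds$ produced by differentiating $s\mapsto p_{\vec g_{T-s}}(X_s)$ in its first argument, leaving only a martingale. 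Integrability required to promote this local martingale to a true martingale then follows from part \ref{it3i}: since every $g_{s,k}\in D^{\otimes k}$ is bounded, we have $|M_t|+|Lp_{\vec g_{T-t}}(X_t)|\le C(1+\langle 1,X_t\rangle^m)$, whose $L^2$-norm is controlled uniformly on $[0,T]$. I expect the subtlest part to be justifying the Leibniz/Fubini manipulation in $s$ when $s\mapsto\vec g_s$ is only known to satisfy \eqref{ODE} in integrated form: I would handle this by a Riemann-sum approximation in $s$, applying the martingale problem pointwise to each $p_{\vec g_{T-s_i}}$, and passing to the limit with the help of the uniform moment bound from part \ref{it3i}.
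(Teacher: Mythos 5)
Your argument is correct and follows essentially the same route as the paper: the paper simply observes that every $p$ of degree $m$ is $(C,\,1+\nu(E^{\Delta})^{2m})$-bounded in the sense of Definition 3.18 of \cite{CS:19} and then invokes Lemma 3.19 there, which encapsulates exactly the Gronwall/localization and uniform-integrability machinery you reconstruct, and likewise delivers Conditions (ii) and (iii) of Theorem 3.4 for part (ii). The only imprecision is your claim that $N^{\langle 1,\cdot\rangle^{2n}}$ stopped at $\tau_R$ is bounded --- for a c\`adl\`ag solution the overshoot of $\langle 1,X_{\tau_R}\rangle$ above $R$ need not be controlled --- but since $\langle 1,\nu\rangle^{2n}\geq 0$ and $|Lf|\leq Cf$ for $f=1+\langle 1,\cdot\rangle^{2n}$, the standard Fatou-plus-Gronwall variant (as used in the proof of Lemma \ref{IIIlem8}) closes this gap, so nothing essential is lost.
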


\begin{proof}
Let $\| \fdot \|_{M(E^{\Delta})}$ denote the total variation norm on ${M}(E^{\Delta})$. Then for all $\nu \in M_+(E^{\Delta})$, it holds $\| \nu \|_{M(E^{\Delta})} = \nu(E^{\Delta})$
and hence
\[
1 + \| \nu\|_{M(E^{\Delta})}^{2m} =1+ \nu(E^{\Delta})^{2m}, \quad \nu \in M_+(E^{\Delta}).
\]
Thus, for any $p=\sum_{k=0}^m \langle g_k, \nu^k \rangle \in P$, we can estimate 
\[
p^2  \leq C(1 + \nu(E^{\Delta})^{2m})
\]
 and  $$|L(1+  \nu(E^{\Delta})^{2m})| \leq C(1+  \nu(E^{\Delta})^{2m})$$ for some constant $C >0$. 
According to Definition 3.18 in \cite{CS:19}, this means that every $p$ of degree $m$ is $(C,1+  \nu(E^{\Delta})^{2m})$ bounded.  Lemma 3.19 in \cite{CS:19} thus yields that for every  $p \in P$, 
\[
\mathbb{E}[p(X_t)^2] < Ce^{Ct}
\]
and the local martingale $N^p$ is actually a true (even square integrable) martingale. 

Concerning part \ref{it3ii}, Lemma 3.19 in \cite{CS:19} also yields Condition (ii) and (iii) of Theorem~3.4 as long as we have solution to \eqref{ODE}.
\end{proof}

\subsection{Existence}\label{IIs40}

Our first main result of this section gives  sufficient conditions for the existence of solutions to the martingale problem. Applications of this result are discussed in Section \ref{sec:applications}. 
Recall that $E$ is throughout a locally compact Polish space. We start by a definition which will be used to describe the form of $Q_2$ in the representation of $L$ given by \eqref{eq:op}.

\begin{definition}\label{def2}
We say that a linear operator $C$ admits a  $(\beta, \pi)$-representation if
\begin{align}\label{eq:copositiveconcrete}
C(g)(x,y)=\frac{1}{2}(\pi(x,y) g(x,x) + \pi(y,x) g(y,y) +2 \beta(x,y) g(x,y)), \quad  g \in D \otimes D
\end{align}
where
\begin{itemize}
\item  $\beta: (E^{\Delta})^2 \to \mathbb{R}$ is a symmetric function such that $\beta(x,x) \geq 0$ for all $x \in E^{\Delta}$;
\item $\pi: (E^{\Delta})^2 \to \mathbb{R}_+$ is a non-negative function  such that $\pi(x,x)=0$ for all $x \in E^{\Delta}$;
\item for all $n\in \mathbb{N}$, $x_1, \ldots, x_n \in E^{\Delta}$, $c_1, \ldots, c_n \in \mathbb{R}_{++}$, the $n \times n$ matrix 
\begin{align}\label{eq:matrix}
A^{(n)}:=\beta_n+ \begin{pmatrix} \sum_{j=1}^n \frac{c_j}{c_1} \pi(x_1, x_j) &  &\\
& \ddots &\\
& & \sum_{j=1}^n \frac{c_j}{c_n} \pi(x_n, x_j)
 \end{pmatrix} \in \mathbb{S}^n_+,
\end{align}
where $\beta_n \in \mathbb{S}^n$ with entries $\beta_{n,ij}=\beta(x_i,x_j)$;
\item the map $ (x,y) \mapsto \frac{1}{2}(\pi(x,y) g(x,x) + \pi(y,x) g(y,y)+ 2\beta(x,y)g(x,y))$ lies in $\widehat{C}_{\Delta}(E^2)$ for all $g \in D\otimes D$. 

\end{itemize}
\end{definition}

Accordingly, our sufficient conditions for the  existence of the martingale problem now read as follows.

\begin{theorem}\label{main1}
Suppose that for $i=1, \ldots n$, each $A_i$ is the generator of a strongly continuous positive group on $C_\Delta(E)$ such that its domain contains both $D$ and $A_i(D)$.
Let $L:\Dcal\to C(M_+(E))$ be a linear operator of form~\eqref{eq:op}, where
\begin{enumerate}
\item\label{it4iv} $B_0: D \to \mathbb{R}$ is given by $B_0(g) =\langle g,b\rangle$ with $b \in M_+(E^{\Delta})$;
\item\label{it4v} $B_1- \frac{1}{2}\sum_{i=1}^n A_i^2: D \to C_{\Delta}(E)$ satisfies the positive minimum principle on $E^{\Delta}$;
\item\label{it4i} $Q_0 \equiv 0$;
\item\label{it4ii} $Q_1$ is of the form 
\[
Q_1(g)= \alpha \diag(g), \quad \text{ that is,} \quad  Q_1(g)(x)=\alpha(x)g(x,x), \quad g \in D \otimes D,
\]
where $\alpha \in C_{\Delta}(E)$ with values in $\mathbb{R}_+$;
\item\label{it4iii} $Q_2$ is of the form 
\[
Q_2(g)= C(g)+ \sum_{i=1}^n (A_i \otimes A_i)(g), \quad g \in D \otimes D,
\]
where $C$ admits a ($\beta, \pi$)-representation.
\end{enumerate}
Then $L$ is $M_+(E)$-polynomial and its martingale problem has an $M_+(E^{\Delta})$-valued solution with continuous paths for every initial condition $\nu\in M_+(E^{\Delta})$.

If additionally, the measure $b$ and the initial condition $\nu$ both lie in $ M_+(E)$
and if there exists some function $m \in C_{\Delta}(E)$ such that
 $$\text{bp-}\lim_{n \to \infty}(B_1(1-g_n)-m(1-g_n))\leq 0,$$
   for some sequence 
   $(g_n)_{n\in \N} \in D \cap C_0(E)$ satisfying $\text{bp-}\lim_{n \to \infty}g_n= 1_E$,
then any solution to the martingale problem takes values  in $M_+(E)$. 
\end{theorem}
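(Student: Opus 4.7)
The plan is to recognize that the hypotheses put $L$ exactly into the framework of Theorem~\ref{Lpol}, so that the polynomial property is automatic and the associated carr\'e-du-champ is an $M_+(E)$-derivation. Existence of an $M_+(E^\Delta)$-valued solution will then follow from the abstract existence theorem on locally compact Polish spaces (Theorem~4.5.4 of \cite{EK:09}, made rigorous in the present setup in Appendix~\ref{app_existence}), once the two standard ingredients are in place: conservativity, which is immediate from $L1 = 0$, and the positive maximum principle of $L$ on $M_+(E^\Delta)$. Continuity of paths will then follow from Lemma~\ref{IIlem5}, and density of $F^D_c\subset\mathcal{D}$ in $C_0(M_+(E^\Delta))$ is provided by Lemma~\ref{IIIlem1mod}.

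The core task is to verify the positive maximum principle. Fix $f\in\mathcal{D}$ and $\nu^*\in M_+(E^\Delta)$ with $f(\nu^*)=\sup f$. I would split
\begin{align*}
Lf(\nu^*) =\ & \langle \partial f(\nu^*), b\rangle + \Big\langle \bigl(B_1-\tfrac12\sum_i A_i^2\bigr)\partial f(\nu^*),\, \nu^*\Big\rangle \\
&+ \tfrac12\sum_i \Big(\langle A_i^2\partial f(\nu^*),\nu^*\rangle + \langle (A_i\otimes A_i)\partial^2 f(\nu^*),(\nu^*)^2\rangle\Big) \\
&+ \tfrac12 \langle Q_1(\partial^2 f(\nu^*)),\nu^*\rangle + \tfrac12\langle C(\partial^2 f(\nu^*)),(\nu^*)^2\rangle.
\end{align*}
Theorem~\ref{th:firstsecondorder}\ref{iti} tells me that $\partial f(\nu^*)\leq 0$ and vanishes on $\supp(\nu^*)$; this together with $b\geq 0$ handles the first term, and together with the positive minimum principle from \ref{it4v} handles the second. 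Theorem~\ref{IIL:KKT2} applied with $\mu=\nu^*$ handles the third line. Theorem~\ref{th:firstsecondorder}\ref{itii} and $\alpha\geq 0$ dispatch the $Q_1$-term.

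The hard part will be the final summand involving $C$. The idea is to approximate $\nu^*$ weakly by atomic measures $\sum_j c_j\delta_{x_j}$ with $x_j\in\supp(\nu^*)$, substitute the $(\beta,\pi)$-representation, and use the symmetry of $\nu^*\otimes\nu^*$ together with $\pi(x,x)=0$ to collect terms; a direct bookkeeping should identify the resulting quadratic form as $\sum_{j,k} c_j c_k\, A^{(n)}_{jk}\,\partial^2_{x_j x_k} f(\nu^*)$, with $A^{(n)}$ the positive semi-definite matrix of Definition~\ref{def2}. Since $-\partial^2 f(\nu^*)$ is a PSD kernel on $\supp(\nu^*)$ by Theorem~\ref{th:firstsecondorder}\ref{itii}, the Schur product theorem (equivalently, non-negativity of the Frobenius pairing of two PSD matrices) gives $\leq 0$, which then passes to the weak limit. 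This is the only place where the subtle admissibility condition~\eqref{eq:matrix} is used in an essential way and I expect it to be the main obstacle.

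For the last statement, suppose $b,\nu\in M_+(E)$ and let $(g_n)\subset D\cap C_0(E)$ with $\text{bp-}\lim g_n=1_E$. I would apply Dynkin's formula to the degree-one polynomial $f_n(\mu):=\langle 1-g_n,\mu\rangle$, whose associated martingale $N^{f_n}$ is a true martingale by Lemma~\ref{lem:martingality}\ref{it3i}, obtaining
\[
u_n(t):=\mathbb{E}[\langle 1-g_n,X_t\rangle]=\langle 1-g_n,\nu\rangle + t\langle 1-g_n,b\rangle + \int_0^t \mathbb{E}[\langle B_1(1-g_n),X_s\rangle]\,ds.
\]
The first two terms tend to $0$ as $n\to\infty$ since $b,\nu\in M_+(E)$ and $1-g_n\to 0$ on $E$. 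Writing $h_n:=B_1(1-g_n)-m(1-g_n)$, the hypothesis yields $h_n$ uniformly bounded with $\limsup_n h_n\leq 0$, so $h_n^+\to 0$ pointwise while remaining uniformly bounded. Dominated convergence (Lemma~\ref{lem:martingality}\ref{it3i} applied to the constant polynomial ensures $\mathbb{E}[X_s(E^\Delta)]<\infty$) combined with $B_1(1-g_n)\leq m(1-g_n)+h_n^+$ gives $u_n(t)\leq o(1)+\|m\|_\infty\int_0^t u_n(s)\,ds$, whence Gronwall forces $u_n(t)\to 0$. Since $1-g_n\downarrow \mathbf{1}_{\{\Delta\}}$ pointwise on $E^\Delta$, monotone convergence yields $\mathbb{E}[X_t(\{\Delta\})]=0$ for each $t$, and the continuity of paths upgrades this to $X_t\in M_+(E)$ for all $t\geq 0$ almost surely.
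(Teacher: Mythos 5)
Your proposal is correct and follows essentially the same route as the paper's proof: the same decomposition of $Lf(\nu^*)$, the same use of Theorems~\ref{th:firstsecondorder} and~\ref{IIL:KKT2}, the identification of the $C$-term as a Hadamard/Schur pairing of the positive semidefinite matrix $A^{(n)}$ from \eqref{eq:matrix} with the negative semidefinite Hessian matrix, and a Gronwall argument for the $M_+(E)$-invariance (which the paper delegates to Lemma~\ref{IIIlem8}\ref{it7ii} but which is the same computation you write out).
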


\begin{remark}
\begin{enumerate}
\item
When the state space are probability measures $M_1(E)$, Theorem 5.6 in \cite{CLS:19} provides sufficient conditions guaranteeing existence of solutions to the corresponding martingale problem. These conditions are stronger and imply the current ones. Indeed, we identify $B_1$ and $Q_2$ here with $B$ and $Q$ there. More specifically, $C(g)$ is identified with $\alpha \Psi(g)$ and $A_i$
with $A_i$. All other quantities are set to $0$.
The conditions on $B$ and $A_i$ are clearly stronger. In particular, the positive maximum principle implies the positive minimum principle on $E^{\Delta}$. Moreover, $\alpha \Psi$ admits a $(\beta,\pi)$-representation for 
$\pi(x,y)=\pi(y,x)=-\beta(x,y)=\alpha(x,y)1_{\{x\neq y\}}$. Observe indeed that in this case the matrix $A^{(n)}$ appearing in \eqref{eq:matrix} is given by $$A^{(n)}_{ij}:=-\alpha(x_i,x_j)1_{\{x_i\neq x_j\}}+\sum_{k=1}^n\frac {c_k}{c_i}\alpha(x_i,x_k)1_{\{x_i\neq x_k\}}1_{\{i=j\}},$$
which is positive semidefinite since
$
\sum_{ij=1}^n A^{(n)}_{ij}v_iv_j=\sum_{i=1}^n\alpha(x_i,x_j)\frac {c_j}{c_j}\big(v_i-v_j\frac{c_j}{c_i}\big)^2\geq0$
for each $v_i\in \R$.

\item
Note that \eqref{eq:op} imposes the implicit condition   that $C(g) $ has to lie in $\widehat C_\Delta(E^2)$ for every $g\in D\otimes D$. If $D=C_\Delta(E)$, then this necessarily yields boundedness conditions on $\pi$ and $\beta$, as is seen from Theorem~\ref{mainthm} below. However, this does not hold for general $D\subseteq C_\Delta(E)$, as one can see by considering $E=\R$, $D\subseteq C^1_\Delta(\R)$, and $\pi(x,y)=\pi(y,x)=-\beta(x,y)=|x-y|^{-1}1_{\{x\neq y\}}$. 
\end{enumerate}
\end{remark}

\begin{proof}[Proof of Theorem~\ref{main1}]
Theorem~\ref{Lpol} implies that $L$ is $M_+(E)$-polynomial. Lemma~\ref{IIIlem8}\ref{it7i} yields the existence of an $M_+(E^{\Delta})$-valued  continuous solution to the martingale problem for any initial condition provided that $L$ satisfies the positive maximum principle on $M_+(E^\Delta)$, which we shall check below.

Let us now verify that the positive maximum principle on $M_+(E^\Delta)$ holds true.
Let therefore $\nu^*\in M_+(E^\Delta)$ be a maximizer of $f \in \mathcal{D}$ over $M_+(E^\Delta)$. Then the optimality conditions in Theorem \ref{th:firstsecondorder} yield
\begin{equation}\label{eq:optcondapplied}
\begin{split}
\partial_x f (\nu^*)\leq 0, \quad  \langle \partial f (\nu^*), \nu^* \rangle =0,\quad
\partial^2_{xx} f (\nu^*)\leq 0  \quad \text{and} \quad \langle \partial^2 f(\nu^*), \mu^2 \rangle \leq 0,
\end{split}
\end{equation}
for each $x\in E$ and $\mu \in M(E^{\Delta})$ with $\supp(\mu) \subseteq \supp(\nu^*)$.
We now analyze $Lf(\nu^*)$, which reads due
the conditions \ref{it4iv} - \ref{it4iii} as follows:
\begin{align*}
Lf(\nu^*)&= \langle  \partial f(\nu^*),b \rangle + \langle B_1(\partial f(\nu^*)), \nu^* \rangle +\frac{1}{2}\langle \alpha \diag(\partial^2 f(\nu^*)), \nu^* \rangle\\
&\quad +  \frac{1}{2}\langle C(\partial^2 f(\nu^*)), (\nu^*)^2 \rangle + \frac{1}{2}  \langle\sum_{i=1}^n (A_i \otimes A_i)(\partial^2 f(\nu^*)),  (\nu^*)^2 \rangle. 
\end{align*}
By \eqref{eq:optcondapplied} and the positivity of $b$ and $\alpha$,  we have
\begin{equation}\label{eq:negative}
\langle  \partial f(\nu^*),b \rangle \leq 0\qquad\text{ and }\qquad
\langle \alpha \diag(\partial^2 f(\nu^*)), \nu^* \rangle \leq 0.
\end{equation}
Next, observe that choosing the signed measure in \eqref{eq:optcondapplied} to be $\sum_{i=1}^n \lambda_i\delta_{x_i}$ for some $\lambda_i \in \mathbb{R}$ yields
$\sum_{i,j=1}^n \lambda_i\lambda_j \partial^2_{x_ix_j}f(\nu^*)\leq 0$,
which implies that the matrix $(H_{n,ij})_{ij=1}^n$ given by $H_{n,ij}:=\partial^2_{x_ix_j}f(\nu^*)$
is negative semidefinite. Taking $\mu\in M_+(E^\Delta)$ such that  $\supp(\mu)$ consists of $n$ points
$x_1, \ldots, x_n \in \supp(\nu^*)$, we thus have $\mu=\sum_{i=1}^n c_i \delta_{x_i}$ for some $c_1, \ldots, c_n >0$.

Since $C(g)$ admits a ($\beta, \pi$)-representation,  
using the notation of Definition~\ref{def2} we can thus write
$$
\langle C(\partial^2 f(\nu^*)),\mu^2 \rangle 
=\sum_{i,j=1}^n(A^{(n)}\circ H_n)_{ij}c_ic_j,
$$
where $\circ$ denotes the Hadamard product, i.e.~the componentwise multiplication. As $A^{(n)}$ is by assumption positive semidefinite and as the Hadmard product between a positive semidefinite and negative semidefinite matrix is negative definite, we can conclude that
\begin{align}\label{eq:C}
\langle C(\partial^2 f(\nu^*)), \mu^2 \rangle \leq 0.
\end{align}
Passing to the weak closure yields \eqref{eq:C}.

Finally, observe that from \eqref{eq:negative} and \eqref{eq:C} we get that
\[
Lf(\nu^*) \leq \langle B_1(\partial f(\nu^*)), \nu^* \rangle + \frac{1}{2}  \langle\sum_{i=1}^n (A_i \otimes A_i)(\partial^2 f(\nu^*)),  (\nu^*)^2 \rangle.
\]
As $B_1- \frac{1}{2}\sum_{i=1}^n A_i^2$ satisfies the positive minimum principle, we have by \eqref{eq:optcondapplied}
\[
\langle B_1(\partial f(\nu^*))- \frac{1}{2}\sum_{i=1}^n A_i^2(\partial f(\nu^*)), \nu^*  \rangle \leq 0.
\]
Therefore,
\[
Lf(\nu^*) \leq \frac{1}{2}\langle \sum_{i=1}^n A_i^2(\partial f(\nu^*)), \nu^* \rangle +\frac{1}{2}  \langle\sum_{i=1}^n (A_i \otimes A_i)(\partial^2 f(\nu^*)),  (\nu^*)^2 \rangle \leq 0,
\]
where the last inequality follows from Theorem~\ref{IIL:KKT2}. This proves the positive maximum principle on $M_+(E^{\Delta})$ and thus the existence of an $M_+(E^{\Delta})$-valued solution. The remaining statement on $M_+(E)$-valued solutions to the martingale problem follows by Lemma~\ref{IIIlem8}\ref{it7ii}.
\end{proof}

Even though Theorem \ref{main1} only gives sufficient conditions for the existence of solutions to the martingale problem for general coefficient domains $D$, the next result shows that the conditions are sharp.
Indeed, we consider the case $D=C_\Delta(E)$ and get the following characterization.

\begin{theorem}\label{mainthm}
Let $D=C_\Delta(E)$ and let $L:\Dcal\to C(M_+(E))$ be a linear operator.
Then $L$ is $M_+(E)$-polynomial, there exists an $M_+(E^{\Delta})$-valued solution to the martingale problem for all initial conditions in $M_+(E^\Delta)$ and all solutions have continuous paths, if and only if $L$ satisfies \eqref{eq:op} with
\begin{enumerate}
\item\label{it8iv} $B_0: C_{\Delta}(E) \to \mathbb{R}$ is given by $B_0(g) =\langle  g,b\rangle$ with a Radon measure $b \in M_+(E^{\Delta})$;
\item\label{it8v} For every $g\in C_{\Delta}(E)$
\begin{equation}\label{eqn2}
B_1g(x)=\int (g(\xi)-g(x)) \nu_B(x,d\xi)+m(x)g(x)
\end{equation}
for some non-negative finite kernel $\nu_B$ from $E^\Delta$ to $E^\Delta$ and some $m\in C_\Delta(E)$;
\item\label{it8i} $Q_0 \equiv 0$;
\item\label{it8iii} $Q_1$ is of the form 
\begin{equation}\label{eqn4}
Q_1(g)(x)=\alpha(x)g(x,x), \quad g \in C_\Delta(E) \otimes C_\Delta(E)
\end{equation}
where $\alpha \in C_{\Delta}(E)$ with values in $\mathbb{R}_+$;
\item\label{it8ii} $Q_2$ admits a ($\beta, \pi$)-representation.
Moreover, the parameters $\pi$ and $\beta$  are  bounded and continuous on $(E^\Delta)^2\setminus\{x=y\}$ and $\pi+\overline \pi+2\beta \in \widehat{C}_{\Delta}(E^2)$, where $\overline \pi(x,y)=\pi(y,x)$.
\end{enumerate}

\noindent For the state space $M_+(E)$, the following equivalence holds.\\

\noindent A linear operator $L: \mathcal{D} \to C(M_+(E))$ is $M_+(E)$-polynomial, there exists an $M_+(E)$-valued solution to the martingale problem for all initial conditions in $M_+(E)$ and all solutions have continuous paths, if and only if $L$ satisfies \ref{it8iv}-\ref{it8ii}, $b \in M_+(E)$, and 
$\nu_B$ is a kernel from $E$ to $E$.
\end{theorem}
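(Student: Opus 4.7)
The plan is to prove the equivalence by identifying the $\Leftarrow$ direction as a direct application of Theorem \ref{main1} (with no auxiliary generators $A_i$), and obtaining the $\Rightarrow$ direction from Theorem \ref{Lpol} together with the positive maximum principle on the locally compact space $M_+(E^\Delta)$, leaving the detailed extraction of the coefficient structure to Appendix \ref{app:A}. The $M_+(E)$-refinement will then follow from the closing statement of Theorem \ref{main1} and a converse localization argument.

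For the $\Leftarrow$ direction I would check that \ref{it8iv}--\ref{it8ii} instantiate the hypotheses of Theorem \ref{main1} with $n=0$ and $C:=Q_2$. Conditions \ref{it4iv}, \ref{it4i}, \ref{it4ii}, \ref{it4iii} are direct. For \ref{it4v}, a brief calculation shows that $B_1$ of the form \eqref{eqn2} satisfies the positive minimum principle on $E^\Delta$: if $0\le g\in C_\Delta(E)$ with $g(x)=0$, then $B_1 g(x)=\int g(\xi)\,\nu_B(x,d\xi)\ge 0$. Theorem \ref{main1} then produces an $M_+(E^\Delta)$-valued continuous solution for every initial condition, the polynomial property is Theorem \ref{Lpol}, and path continuity of \emph{any} solution follows from Lemma \ref{IIlem5} once we observe, via Lemma \ref{lem2}, that $\Gamma$ is a derivation since the representation \eqref{eq:op} vanishes on functions with $\partial f=0$ and $\partial^2 f=0$.

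For the $\Rightarrow$ direction, existence of $M_+(E^\Delta)$-valued solutions with continuous paths for every initial condition, together with Lemma \ref{IIlem5}, forces $\Gamma$ to be an $M_+(E)$-derivation; Theorem \ref{Lpol} then yields the representation \eqref{eq:op} for operators $B_0, B_1, Q_0, Q_1, Q_2$ defined on $D=C_\Delta(E)$. By \cite[Theorem~4.5.4]{EK:09} applied to the locally compact space $M_+(E^\Delta)$, the operator $L$ must satisfy the positive maximum principle on $M_+(E^\Delta)$. The explicit forms \ref{it8iv}--\ref{it8ii} are then extracted by plugging into $L$ cylindrical test functions in $\mathcal{D}$ that attain their maxima at distinguished measures---the zero measure to isolate $B_0$ and force $Q_0\equiv 0$, atoms $c\delta_x$ to pin down the diagonal form of $Q_1$ and the kernel structure of $B_1$ in \eqref{eqn2}, and finite sums of atoms to probe the off-diagonal behaviour of $Q_2$---combined with the first- and second-order optimality conditions of Theorem \ref{th:firstsecondorder}. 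The hypothesis $D=C_\Delta(E)$ is crucial here: after a bounded-pointwise closure argument it yields the boundedness and continuity assertions of \ref{it8v} and \ref{it8ii}.

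The principal obstacle is the derivation of the $(\beta,\pi)$-representation of $Q_2$, and in particular the positive semidefiniteness of the matrix $A^{(n)}$ in \eqref{eq:matrix} for every finite configuration of points with positive weights. This is achieved in Appendix \ref{app:A} by combining Theorem \ref{th:firstsecondorder}\ref{itii} with test functions whose second derivatives are tailored to isolate $\mu=\sum_{i=1}^n c_i\delta_{x_i}$ with $\supp(\mu)\subseteq\supp(\nu^*)$, and by varying the weights $c_i$ to decouple diagonal from off-diagonal contributions. Finally, the $M_+(E)$-equivalence reduces to the first one: the $\Leftarrow$ direction follows from the last statement of Theorem \ref{main1}, using that $b\in M_+(E)$ and $\nu_B$ being a kernel from $E$ to $E$ prevent mass from escaping to $\Delta$; the $\Rightarrow$ direction follows by testing $L$ on $F^D$-approximations of $\bone_{\{\Delta\}}$ to conclude $b(\{\Delta\})=0$ and $\nu_B(x,\{\Delta\})=0$ for every $x\in E$.
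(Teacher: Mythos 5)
Your overall architecture coincides with the paper's: the sufficiency direction is Theorem \ref{main1} with $n=0$ and $C=Q_2$ (the paper invokes Lemma \ref{lem:boundedness} to get the positive minimum principle for $B_1$, but your direct computation $B_1g(x)=\int g(\xi)\,\nu_B(x,d\xi)\ge 0$ is equally valid), and the necessity direction goes through Theorem \ref{Lpol}, Lemma \ref{IIlem5}, the positive maximum principle (the paper's Lemma \ref{IIIlem9}; note that \cite[Theorem~4.5.4]{EK:09} is really the converse implication, existence from the maximum principle, but the fact you need is standard) and then Lemma \ref{lem3} of Appendix \ref{app:A}, which is exactly where the paper extracts \ref{it8iv}--\ref{it8ii}, including the $(\beta,\pi)$-representation via Lemma \ref{lem:copositivity}. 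Up to this point your proposal is a faithful reconstruction.

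The genuine gap is in the necessity direction of the $M_+(E)$-statement, where you must upgrade $b\in M_+(E^\Delta)$ and $\nu_B$ from a kernel on $E^\Delta$ to $b\in M_+(E)$ and a kernel from $E$ to $E$. Your proposal to do this by ``testing $L$ on $F^D$-approximations of $\bone_{\{\Delta\}}$'' does not work as stated: the positive maximum principle on $M_+(E)$ carries no information at $\Delta$, since measures in $M_+(E)$ assign no mass there, so a static test-function argument at the level of the generator cannot see $b(\Delta)$ or $\nu_B(\fdot,\Delta)$. One genuinely needs the dynamics. Even the naive dynamic attempt --- writing $\E[\langle h_n,X_t\rangle]$ via the martingale property for $h_n\to \bone_{\{\Delta\}}$ boundedly pointwise and passing to the limit --- only yields $\int_0^t\E[\langle\nu_B(\fdot,\Delta),X_s\rangle]\,ds=0$, and since $x\mapsto\nu_B(x,\Delta)$ is merely a bounded pointwise limit of continuous functions one cannot let $s\downarrow 0$ along the weakly continuous paths to conclude $\nu_B(x,\Delta)=0$ for \emph{every} $x$. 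The paper circumvents precisely this obstruction: it first establishes the Gronwall bound $\E[\langle 1,X_t\rangle]\le m_1(X_0,t)$, then uses that $K-B_1$ is surjective for $K>\|m\|$ to choose $g_n$ with $(K-B_1)g_n=h_n$, applies the martingale identity to $e^{-Kt}\langle g_n,X_t\rangle$, sends $t\to\infty$ to identify $\lim_n\langle g_n,X_0\rangle=-\tfrac1K b(\Delta)$, and only then deduces $b(\Delta)=0$ and $\nu_B(x,\Delta)=0$ pointwise from the resolvent equation. This resolvent regularization is the missing idea in your sketch.
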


\begin{remark}\label{rem:equivalence}\phantomsection
\begin{enumerate}
\item 
Condition~\ref{it8v} for the bounded operator $B_1$ is equivalent to 
the requirement that $B_1 g + \| B_1\| g \geq 0$ for all $g \geq0$. This in turn is also equivalent to $(\exp(tB_1))_{t \geq 0}$ being a positive semigroup on $C_{\Delta}(E)$ and to $B_1$ satisfying the positive minimum principle; see Theorem B.II.1.3 in \cite{arendt:86}. 
When $E$ is finite this exactly means that the matrix $B_1$  has non-negative off-diagonal elements. 
The decomposition as of Condition~\ref{it8v} then means that such a matrix is decomposed into a transition rate matrix where the diagonal elements are defined such that the rows sum up to $0$ and a diagonal matrix (corresponding to $m$) where this procedure is compensated.

\item
Note that in the case of $D= C_{\Delta}(E)$, adding 
to $Q_2$ operators of the form
\[
(A \otimes A)g,
\]
where $A$ is a generator of a strongly continuous positive group on $C_\Delta(E)$ does not yield more generality, since $A$ is necessarily of the form $Ag(x)=a(x)g(x)$ for some $a \in C_{\Delta}(E)$ as proved in Lemma \ref{lem:diagonal}.
Therefore, $(A \otimes A)(g)(x,y)=a(x)a(y)g(x,y)$. This however can be absorbed in the function $\beta$.
\end{enumerate}
\end{remark}

\begin{proof}[Proof of Theorem~\ref{mainthm}]
Assume  that $L$ satisfies~\eqref{eq:op} with conditions \ref{it8iv} to \ref{it8ii}. 
By Lemma~\ref{lem:boundedness} condition \ref{it8v}  implies that  $B_1$ satisfying the positive minimum principle on $E^\Delta$. 
Hence, all conditions of Theorem~\ref{main1} are satisfied and we get that $L$ is $M_+(E)$-polynomial and that its martingale problem has an $M_+(E^{\Delta})$ valued solution with continuous paths for every initial condition $\nu\in M_+(E^{\Delta})$. By Theorem~\ref{main1} we also get that any solution with initial value $\nu \in  M_+(E)$ takes values in $M_+(E)$ provided that $b\in M_+(E)$, $\nu_B$ is a kernel from $E$ to $E$, and $m\in C_0(E)$. Indeed, it remains to verify that $\text{bp-}\lim_{n \to \infty}B_1g_n\geq B_1 1$ where
$g_n \in  C_0(E)$ is a sequence such that $\text{bp-}\lim_{n \to \infty}g_n= 1_E$. The claim then follows by the dominated convergence theorem.

We now prove the opposite implication. Assume that $L$ is $M_+(E)$-polynomial, then there exists an $M_+(E^{\Delta})$-valued solution to its martingale problem for each initial condition in $M_+(E^{\Delta})$ and all solutions have continuous paths. Theorem~\ref{Lpol} and Lemma~\ref{IIlem5} imply that $L$ satisfies~\eqref{eq:op}, and due to Lemma~\ref{IIIlem9} also the positive maximum principle on $M_+(E^{\Delta})$. An application of Lemma~\ref{lem3} then yields the result.

Next, assume that $L$ satisfies \ref{it8iv}-\ref{it8ii}, $b \in M_+(E)$,  
and $\nu_B$ is a kernel from $E$ to $E$. By the first part of the theorem it holds that $L$ is $M_+(E)$-polynomial, there exists an $M_+(E^{\Delta})$-valued solution to the martingale problem for all initial conditions in $M_+(E)$, and all solutions have continuous paths. By considering any sequence of functions $g_n\in C_0(E)$ with $0\leq g_n(x)\uparrow 1$ for all $x\in E$ the claim follows by Lemma~\ref{IIIlem8}\ref{it7ii}.

Finally, assume that $L$ is $M_+(E)$-polynomial, there exists an $M_+(E)$-valued solution to the martingale problem for all initial conditions in $M_+(E)$ and all solutions have continuous paths. By the same argument as above $L$ satisfies~\eqref{eq:op} and due to Lemma~\ref{IIIlem9} the positive maximum principle on $M_+(E)$. Conditions  \ref{it8iv}-\ref{it8ii} are then satisfied by Lemma~\ref{lem3}.

It thus remains to prove that $b \in M_+(E)$, $\nu_B$ is a kernel from $E$ to $E$. As a first step, observe that
$$\E[\langle 1,X_t\rangle]\leq \langle 1, X_0\rangle +\int_0^t \langle 1, b\rangle+\|m\|\E[\langle 1 ,X_s\rangle]ds$$
and Lemma~\ref{lem:martingality}\ref{it3i} we can apply the Gronwall inequality to get that by
$$\E[\langle 1,X_t\rangle]\leq (X_0(E^\Delta)+b(E^\Delta)t)\exp(\|m\|t)=:m_1(X_0,t).$$
Next, since $\widetilde B_1g:=B_1g-\|m\|g$ is a bounded dissipative operator on $C_\Delta(E)$ we know that the range of $(\ell-\widetilde B_1)$ is given by $C_\Delta (E)$ for each $\ell>0$. This  implies that the range of $(K-B_1)$ is given by $C_\Delta (E)$ for each $K>\|m\|$. Fix now $K>\|m\|$,  $(h_n)_{n\in \N}\subseteq C_\Delta(E)$ converging to $1_\Delta$ in the bounded pointwise sense, and choose $(g_n)_{n\in \N}\subseteq C_\Delta(E)$ such that $(K-B_1)g_n=h_n$. We then have that
\begin{align*}
\langle g_n,X_0\rangle e^0
&=-\E[\langle g_n,X_t\rangle]e^{-Kt}-\int_0^t \E[ -K\langle g_n,X_s\rangle e^{-Ks}+(\langle g_n,b\rangle+\langle B_1g_n,X_s\rangle) e^{-Ks}] ds\\
&=-\E[\langle g_n,X_t\rangle]e^{-Kt}+\frac 1 K \langle g_n,b\rangle (e^{-Kt}-1) -\int_0^t \E[-\langle h_n,X_s\rangle e^{-Ks}] ds.
\end{align*}
Since $|\E[\langle g_n,X_t\rangle]|\leq \|g_n\|\E[\langle 1,X_t\rangle]\leq\|g_n\|m_1(X_0,t)$ sending $t$ to infinity we get
$$
\langle g_n,X_0\rangle
=-\frac 1 K \langle g_n,b\rangle+\int_0^\infty \E[\langle h_n,X_s\rangle e^{-Ks}] ds,
$$
and hence
$$\lim_{n\to\infty}\langle g_n,X_0\rangle=-\frac 1 K b(\Delta)+\int_0^\infty\E[X_s(\Delta)]e^{-Ks}ds=-\frac 1 K b(\Delta),$$
 showing that $\lim_{n\to\infty} g_n(x)=b(\Delta)=0$ for each $x\in E$. Using that 
$$|\langle g_n,X_0\rangle|\leq \sup_{n\in \N}\|h_n\| \int_0^\infty \E[m_1(X_0,s)e^{-Ks}] ds<\infty$$
we can conclude that $(g_n)_{n\in \N}$ is a bounded sequence. 

Next, observe that $\lim_{n\to\infty} (B_1g_n(x)-Kg_n(x))=\lim_{n\to\infty} -h_n(x)=-1_{\{x=\Delta\}}\leq 0$ for each $x\in E^\Delta$. Inserting the from of $B_1$ yields
$$(\lim_{n\to\infty}g_n(x))\Big(\nu_B(x,\Delta)-1_{\{x=\Delta\}}(\nu_B(\Delta, E^\Delta)+K-m(\Delta))\Big)=-1_{\{x=\Delta\}}.$$
Since inserting $x=\Delta$ we get that $(\lim_{n\to\infty}g_n(x))\neq 0$, we can conclude that $\nu_B(x,\Delta)=0$ for each $x\in E$ proving the claim.
\end{proof}

\begin{example}
In accordance with Example~\ref{ex1}, 
suppose that $L$ satisfies the conditions of Theorem~\ref{mainthm} for $B_0=0$ and $Q_1=0$. Then the dual operator $L_m$ satisfies $L_m^0=0,\ldots, L_m^{m-1}=0$ and
$$L_m^m (g^{\otimes m})
= mB_1(g)\otimes g^{\otimes(m-1)}+\frac {m(m-1)}2 Q_2(g\otimes g)\otimes g^{\otimes (m-2)}.$$
Observe that $L_m^m$ satisfies positive minimum principle. Since $B_1$ and $Q_2$ are bounded the operator $L_m^{m}$ generates a strongly continuous positive semigroup and the moment formula can thus be applied.

A quick look at the generator $L$ of the process $(S_t\mu)_{t\geq0}$ studied in Example~\ref{ex1} shows that $L$ satisfies the conditions of Theorem~\ref{mainthm} for $B_0=0$,$B_1=0$, $Q_1=0$, and
$Q_2(g)=\sigma^2 g$
and is thus of the given form for $C=0$, $n=1$, and $A_1g=\sigma g$.

A similar reasoning can be applied to compute the generator  $L$ of $(Y_t\mu)_{t\geq0}$ for $dY_t=\sigma\sqrt Y_t dW_t $. The obtained operator $L$ does not satisfy the conditions of Example~\ref{ex1} but  satisfies the conditions of Theorem~\ref{mainthm} for the parameters $B_0=0$, $B_1=0$, $Q_1(g)(x)=\sigma^2g(x,x)$, and
$Q_2=0$.
\end{example}

\subsection{Uniqueness in law}

Having established existence to the martingale problem, we are now concerned with uniqueness. As in the finite dimensional case, uniqueness in law follows if the (marginal) moments determine 
the finite-dimensional marginal distributions. This property is known as \emph{determinacy} of the moment problem. Whenever this holds true, we can then conclude uniqueness in law by relying on the moment formula.
The results of this section generalize Theorem 4.2 in  \cite{FL:16} and provide conditions ensuring the existence of exponential moments. These conditions are satisfied by measure-valued affine diffusions treated in Section \ref{sec:affine} below.

\begin{lemma}
Let $L$ be an $M_+(E)$-polynomial operator and  $(X_t)_{t \geq 0} $  an  $M_+(E^{\Delta})$-valued solution to the corresponding martingale problem with initial value $X_0=\nu$.
Suppose that the conditions of Theorem~\ref{thm1} are satisfied  for each $g\in \bigoplus_{k=0}^m D^{\otimes k}$ and each $m\in \N$. If for each $t\geq 0$ there exists $\varepsilon >0$ with
\begin{equation}\label{ineq:mgf}
\mathbb{E}[e^{\varepsilon\langle 1,X_t\rangle}]<\infty,
\end{equation}
then the law of $X$ is uniquely determined by $L$ and $\nu$.
\end{lemma}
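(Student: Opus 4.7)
The plan is to use the moment formula of Theorem~\ref{thm1} together with the exponential moment bound \eqref{ineq:mgf} to pin down all finite dimensional distributions of the process. First I would show that for any $0 \le t_1 < \cdots < t_n$, $g_1, \dots, g_n \in D$, and $k_1, \dots, k_n \in \N$, the joint moment $\E[\prod_{i=1}^n \langle g_i, X_{t_i}\rangle^{k_i}]$ is completely determined by $L$ and $\nu$. The argument is a backward induction on the time index. At step $i$, starting from the polynomial $r_i \in P^D$ produced in the previous step, the function $\langle g_i, \fdot\rangle^{k_i}\, r_i$ is again in $P^D$, and Theorem~\ref{thm1} (applied with the solution of \eqref{ODE} guaranteed by the hypothesis) yields $\E[\langle g_i, X_{t_i}\rangle^{k_i} r_i(X_{t_i}) \mid \mathcal F_{t_{i-1}}] = r_{i-1}(X_{t_{i-1}})$ for some $r_{i-1} \in P^D$. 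Iterating the tower property from $r_n \equiv 1$ produces a polynomial $r_0 \in P^D$, depending only on $L$, the $g_i$, the $t_i$, and the $k_i$, with $r_0(\nu)$ equal to the joint moment above.

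Next I would upgrade moment knowledge to law knowledge. Since $|\langle g, \mu\rangle| \le \|g\|\, \langle 1, \mu\rangle$ for every non-negative measure $\mu$, the hypothesis \eqref{ineq:mgf} implies that each $\langle g_i, X_{t_i}\rangle$ has a finite moment generating function in a neighborhood of $0$. Hölder's inequality then yields the finiteness of the joint MGF $(s_1, \dots, s_n) \mapsto \E[\exp(\sum_i s_i \langle g_i, X_{t_i}\rangle)]$ on some neighborhood of $0 \in \R^n$, so Cramér's multivariate moment-determinacy criterion shows that the joint distribution of $(\langle g_1, X_{t_1}\rangle, \dots, \langle g_n, X_{t_n}\rangle)$ is uniquely determined by its joint moments, and hence by $L$ and $\nu$.

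Finally, since $D$ is dense in $C_\Delta(E)$, the evaluations $\mu \mapsto \langle g, \mu\rangle$ with $g$ ranging over a countable dense subset of $D$ separate the points of $M_+(E^\Delta)$ and generate its Borel $\sigma$-algebra. Therefore the knowledge of all such joint laws determines the finite dimensional distributions of $X$ as an $M_+(E^\Delta)$-valued process. Because a solution to the martingale problem has càdlàg trajectories, its law on the Skorokhod space $D([0, \infty); M_+(E^\Delta))$ is determined by its finite dimensional marginals, completing the argument. The main obstacle is the second step: the fact that marginal exponential bounds, rather than a joint one, already suffice for multivariate moment determinacy. This is precisely what the Hölder estimate above addresses, producing a joint MGF that is finite on an open neighborhood of the origin.
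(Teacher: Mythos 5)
Your overall strategy --- moment formula plus exponential moments, hence moment determinacy, hence determined finite dimensional distributions, hence a determined law on path space --- is exactly the route the paper takes (it compresses the details into a reference to the proof of Lemma~4.1 in \cite{FL:16}). Your second and third steps are sound: the H\"older bound $|\langle g,\mu\rangle|\le\|g\|\langle 1,\mu\rangle$ does give a joint moment generating function finite on a neighborhood of the origin, Cram\'er's criterion then yields multivariate determinacy, and density of $D$ in $C_\Delta(E)$ together with right-continuity of paths upgrades determined marginals to a determined law.

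The one step that does not go through as written is the backward induction on joint moments. You assert that the polynomial $r_{i-1}$ produced by Theorem~\ref{thm1} lies again in $P^D$, so that $\langle g_{i-1},\fdot\rangle^{k_{i-1}}\,r_{i-1}$ can be fed back into the moment formula at the next step. But the coefficients of $r_{i-1}$ are the time-$(t_i-t_{i-1})$ values of the solution of the dual system \eqref{ODE}, which live in $\mathcal{D}(L_m)$ (the domain of the closure) and need not belong to $\bigoplus_{k}D^{\otimes k}$; the hypothesis of the lemma only guarantees solvability of \eqref{ODE} for initial data in $\bigoplus_{k=0}^m D^{\otimes k}$, so the subsequent applications of Theorem~\ref{thm1} are not licensed. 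The standard repair, and the one implicit in the paper's appeal to \cite{FL:16}, is to induct on conditional \emph{laws} rather than on joint moments: for consecutive times the conditional moments $\E[\langle g_{j_1},X_{t_i}\rangle\cdots\langle g_{j_k},X_{t_i}\rangle\mid\F_{t_{i-1}}]$ with $g_{j_\ell}\in D$ have coefficient vectors in $\bigoplus_k D^{\otimes k}$ (by polarization), hence are determined functions of $X_{t_{i-1}}$; together with the a.s.\ finiteness of the conditional exponential moment this pins down the conditional law of $(\langle g_1,X_{t_i}\rangle,\ldots,\langle g_n,X_{t_i}\rangle)$ given $\F_{t_{i-1}}$ as a kernel in $X_{t_{i-1}}$, and composing these kernels determines the finite dimensional distributions without ever invoking the moment formula outside $P^D$.
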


\begin{proof}
Observe that condition \eqref{ineq:mgf} yields $\mathbb{E}[e^{\varepsilon|\langle g,X_t\rangle|}]<\infty$ for each $g\in D$ with $|g|\leq 1$.
Following proof of Lemma~4.1 in \cite{FL:16} we thus get that the law of 
$$(\langle g_1,X_t\rangle,\ldots,\langle g_n,X_t\rangle)_{t\geq 0}$$
is uniquely determined by $L$ and $\nu$ for each $g_1,\ldots, g_n\in D$. Using that $D$ is a dense subspace of $C_\Delta(E)$ the claim follows.
\end{proof}

If $L$ admits a representation as described in Theorem~\ref{main1} explicit sufficient conditions for \eqref{ineq:mgf} can be provided. Indeed, the following theorem assumes a linear growth condition, meaning that $Q_2\equiv 0$, to ensure existence of exponential
moments.

\begin{theorem}\label{thm5}
Let $L:\Dcal\to C(M_+(E))$ be a linear operator satisfying the assumptions of Theorem~\ref{main1} and $X$ be an $M_+(E^\Delta)$-valued solution of the corresponding martingale problem. 
Assume that $Q_2\equiv 0$ and that $B_1:D\to C_\Delta(E)$ is the generator of a strongly continuous semigroup $(P_t)_{t\geq 0}$. 
Then for each $t\geq 0$ there exists $\varepsilon >0$ such that condition \eqref{ineq:mgf} holds true and thus the martingale problem for $X$ is well-posed.
\end{theorem}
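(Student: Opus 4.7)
The plan is to construct an explicit non-negative exponential supermartingale of affine type by solving a Riccati-like equation associated with $L$. Because $Q_2\equiv 0$ makes $L$ of linear-growth type, the ``hidden'' exponential-affine structure is available, and this will give $\mathbb{E}[e^{\varepsilon\langle 1,X_T\rangle}]<\infty$ for sufficiently small $\varepsilon>0$.

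The first step is to solve, for $\varepsilon>0$ small, the quadratic evolution equation
\begin{equation*}
\dot u_t \;=\; B_1 u_t + \tfrac{1}{2}\alpha\, u_t^{2},\qquad u_0 \;=\; \varepsilon\cdot 1,
\end{equation*}
in $C_\Delta(E)$, understood in the mild (Duhamel) form
$u_t = P_t(\varepsilon\cdot 1) + \int_0^t P_{t-s}\bigl(\tfrac{1}{2}\alpha u_s^{2}\bigr)\,ds$,
where $(P_t)_{t\geq0}$ is the strongly continuous semigroup generated by $B_1$. A Picard iteration gives a local mild solution; comparison with the scalar Riccati $\dot y = Ny+\tfrac{1}{2}\|\alpha\|y^{2}$, with $N$ chosen so that $\|P_t\|\leq M e^{Nt}$, shows that for any fixed $T$ there is $\varepsilon>0$ with $\sup_{t\in[0,T]}\|u_t\|<\infty$. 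Positivity of the semigroup (which follows from the positive minimum principle for $B_1$ under the assumptions of Theorem~\ref{main1}) and of $\alpha$ keeps $u_t\geq 0$, and the standard semigroup theory yields enough regularity to view $u_t$ as an element of (or a limit of elements of) $D$.

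Next, set $v_t := u_{T-t}$ and $\phi_t := \int_0^{T-t}\langle u_s, b\rangle\,ds$, and consider the exponential cylindrical function
\begin{equation*}
f(t,\nu) \;=\; \exp\!\bigl(\phi_t + \langle v_t,\nu\rangle\bigr).
\end{equation*}
Using $\partial f(t,\nu)=v_t f(t,\nu)$, $\partial^{2}f(t,\nu)=(v_t\otimes v_t)f(t,\nu)$ and the representation~\eqref{eq:op} of $L$ together with $Q_0=0$, $Q_2\equiv 0$ and $Q_1(g)(x)=\alpha(x)g(x,x)$, a direct computation gives
\begin{equation*}
\partial_t f + Lf \;=\; f\cdot\Bigl[\dot\phi_t + \langle v_t,b\rangle + \bigl\langle \dot v_t + B_1 v_t + \tfrac{1}{2}\alpha v_t^{2},\,\nu\bigr\rangle\Bigr] \;=\; 0,
\end{equation*}
by definition of $\phi$ and the Riccati equation satisfied by $u$, hence by $v$. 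Therefore $M_t := f(t,X_t)$ is a local martingale, and being non-negative it is a supermartingale. Evaluating $\mathbb{E}[M_T]\leq M_0$ yields
\begin{equation*}
\mathbb{E}\bigl[e^{\varepsilon\langle 1,X_T\rangle}\bigr] \;\leq\; \exp\!\Bigl(\langle u_T, X_0\rangle + \int_0^T \langle u_s, b\rangle\,ds\Bigr) \;<\; \infty,
\end{equation*}
which is \eqref{ineq:mgf}; well-posedness follows from the preceding lemma.

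The main technical obstacle is the rigorous use of the martingale problem for $f$, since $f(t,\cdot)$ is an exponential cylindrical function and hence not strictly an element of $\mathcal{D}=\operatorname{span}\{P^D,F^D\}$ (the latter uses $\phi\in C_0^\infty$). I would handle this by a standard two-fold approximation: choose cut-offs $\chi_n\in C_0^\infty(\mathbb{R})$ with $\chi_n(y)=e^{y}$ on $[-n,n]$, so that $\chi_n(\phi_t+\langle v_t,\nu\rangle)\in F^D$ (after, if necessary, approximating $v_t$ by elements of $D$), apply the martingale property on the localised interval $[0,\tau_k]$ with $\tau_k:=\inf\{t:\langle 1,X_t\rangle\geq k\}$, and pass to the limit using the polynomial moment bounds of Lemma~\ref{lem:martingality}\ref{it3i} and dominated convergence. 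This step, together with the boundedness of the Riccati solution on $[0,T]$, are the only non-routine points; the rest is the direct computation above.
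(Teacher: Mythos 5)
Your proof is essentially correct, but it takes a genuinely different route from the paper. The paper does not use the exponential-affine structure at all: it sets $Y_t=\langle P_{T-t}1,X_t\rangle-A_t$ with $A_t=\langle P_T1,X_0\rangle+\int_0^t\langle P_{T-s}1,b\rangle ds$, observes that $Y$ is a local martingale whose quadratic variation satisfies the linear bound $\sigma_t^2\leq C(1+|Y_t|)$, and then invokes Lemma~\ref{lem1} (a generalization of Hajek's mean stochastic comparison theorem) to dominate $Y_T$ by a Gaussian random variable, following the proof of Lemma~C.1 in \cite{FL:16}. You instead solve the Riccati equation $\dot u_t=B_1u_t+\tfrac12\alpha u_t^2$, $u_0=\varepsilon\cdot 1$, forward with \emph{positive} initial data, verify $\partial_t f+Lf=0$ for the exponential $f(t,\nu)=\exp(\phi_t+\langle u_{T-t},\nu\rangle)$, and use the non-negative local martingale/supermartingale inequality $\E[M_T]\leq M_0$. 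Both arguments are valid; yours yields an explicit quantitative bound $\E[e^{\varepsilon\langle 1,X_T\rangle}]\leq\exp(\langle u_T,X_0\rangle+\int_0^T\langle u_s,b\rangle ds)$ and ties the result directly to the affine machinery of Section~\ref{sec:affine} (it is the ``positive-$g$'' counterpart of Theorem~\ref{thm_affine}, where boundedness by $1$ is replaced by the supermartingale property), at the price of having to establish short-time well-posedness of the Riccati equation with positive data and to justify applying $L$ to an unbounded exponential test function. The paper's route avoids the Riccati analysis entirely and only needs the single test function $\langle P_{T-s}1,\fdot\rangle$, but it is less explicit. The two non-routine points you flag are real: (i) the mild solution $u_t$ need not lie in $D$ (nor in the domain of $B_1$), so the identity $\partial_tf+Lf=0$ must be understood through an approximation of $u_t$ by domain elements with control of the error in $B_1$; and (ii) the cut-off/localization with $\tau_k=\inf\{t:\langle 1,X_t\rangle\geq k\}$ is needed since $f(t,\fdot)\notin F^D$ — here non-negativity lets you conclude by Fatou rather than dominated convergence. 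Neither point is fatal (and the paper's own proof silently applies $L$ to $\langle P_{T-s}1,\fdot\rangle$ without checking $P_{T-s}1\in D$, an analogous issue), so I consider your argument a sound alternative proof.
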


The proof of Theorem~\ref{thm5} relies on the following  generalization of Theorem~1.3 in \cite{H:85}.

\begin{lemma}\label{lem1}
Let $(Y_t)_{t\geq 0}$ be a stochastic process satisfying $dY_t=b^Y_t dt+dM_t$ for some continuous local martingale $M$ whose quadratic variation is given by $d[M]_t=(\sigma^Y_t)^2 dt$ and some continuous stochastic processes $b^Y$ and $\sigma^Y$. Suppose that $\sup_{t\geq0}|b^Y_t|\leq m$ and $\sup_{t\geq0}|\sigma^Y_t|\leq \rho$ almost surely for some constants $m,\rho$. Then for any non-decreasing convex function $\Phi$ on $\R$ it holds
$$\E[\Phi(Y_t)]\leq \E[\Phi(V)],$$
where $V$ is a Gaussian random variable with mean $Y_0+mt$ and variance $\rho^2 t$.
\end{lemma}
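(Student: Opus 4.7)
The plan is to run a PDE-based coupling argument in the spirit of Hajek's original proof. Fix a horizon $T\geq 0$ and introduce the value function
\[
u(t,y):=\E\bigl[\Phi\bigl(y+m(T-t)+\rho\,(\widetilde W_T-\widetilde W_t)\bigr)\bigr],\qquad (t,y)\in[0,T]\times\R,
\]
where $\widetilde W$ is a standard Brownian motion (independent of everything else). Standard heat-kernel regularization shows that $u$ is smooth on $[0,T)\times\R$ and solves the backward Kolmogorov equation
\[
\partial_t u+m\,\partial_y u+\tfrac12\rho^2\,\partial_{yy}u=0,\qquad u(T,\fdot)=\Phi.
\]
Since convolution with a Gaussian preserves monotonicity and convexity, and since $\Phi$ is non-decreasing and convex, the same holds for $y\mapsto u(t,y)$; hence $\partial_y u\geq 0$ and $\partial_{yy}u\geq 0$ throughout.

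Next, I would apply It\^o's formula to $u(t,Y_t)$ on $[0,T)$. Using the PDE, the finite-variation part of $du(t,Y_t)$ becomes
\[
\bigl(b^Y_t-m\bigr)\partial_y u(t,Y_t)\,dt+\tfrac12\bigl((\sigma^Y_t)^2-\rho^2\bigr)\partial_{yy}u(t,Y_t)\,dt,
\]
which is non-positive $\P$-a.s.\ thanks to the assumed bounds $b^Y_t\leq m$ and $(\sigma^Y_t)^2\leq\rho^2$ combined with the sign information on $\partial_y u$ and $\partial_{yy}u$. Therefore $u(\fdot,Y_\fdot)$ is a local supermartingale on $[0,T)$. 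Localizing with $\tau_n:=\inf\{t\geq 0:\ |Y_t|\geq n\}\wedge T$, the stochastic integral against $dM$ becomes a true martingale, and one obtains
\[
\E[u(t\wedge\tau_n,Y_{t\wedge\tau_n})]\leq u(0,Y_0)=\E[\Phi(V)],\qquad t\in[0,T).
\]
Letting $n\to\infty$ and $t\uparrow T$, Fatou's lemma applied to the positive part (together with a lower-bounding constant coming from $\Phi$ being non-decreasing, so $\Phi(Y_t)\geq \Phi(-\infty)$ can be replaced by working with $\Phi-\Phi(-n)$ and truncating, or simply assuming $\E[\Phi(V)]<\infty$ which makes the statement non-trivial) yields $\E[\Phi(Y_T)]\leq\E[\Phi(V)]$.

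The main technical obstacle is regularity: if $\Phi$ is only non-decreasing convex, it need not be smooth, so the argument applies cleanly on $[0,T)$ where the Gaussian smoothing gives $u\in C^{1,2}$, but the passage to $t=T$ requires either continuity of $\Phi$ (so $u(t,y)\to\Phi(y)$ as $t\uparrow T$ by dominated convergence) or an approximation of $\Phi$ by a monotone sequence of smooth non-decreasing convex functions $\Phi_k\uparrow\Phi$, with the final inequality recovered via monotone convergence. The secondary subtlety, handled by the localization above, is that neither $\partial_y u$ nor $Y$ is a priori bounded, so $\int_0^\fdot \partial_y u(s,Y_s)\,dM_s$ must be controlled through stopping times before taking expectations; the case $\E[\Phi(V)]=\infty$ is trivial.
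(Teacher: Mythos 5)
Your argument is correct in substance, but it takes a genuinely different route from the paper. The paper's proof (following Hajek) is a pathwise coupling: one introduces an auxiliary Brownian motion $W$ independent of $Y$ and sets $Z^\pm_t:=Y_0+mt+M_t\pm\int_0^t(\rho^2-(\sigma^Y_s)^2)^{1/2}\,dW_s$, observes that $\tfrac12(Z^+_t+Z^-_t)-Y_t=mt-\int_0^t b^Y_s\,ds\geq 0$, identifies $Z^\pm$ as Gaussian with mean $Y_0+mt$ and variance $\rho^2 t$ via L\'evy's characterization (their quadratic variation is exactly $\rho^2 t$), and concludes by applying monotonicity and then convexity of $\Phi$ $\omega$-by-$\omega$: $\E[\Phi(Y_t)]\leq\E[\Phi(\tfrac12(Z^+_t+Z^-_t))]\leq\tfrac12\E[\Phi(Z^+_t)+\Phi(Z^-_t)]=\E[\Phi(V)]$. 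That route needs no regularity of $\Phi$ beyond monotone convexity, no It\^o calculus on a value function, and no terminal-time limit. Your PDE/supermartingale comparison is the other classical proof of such results: it costs you the technical overhead you correctly flag (smoothness of $u$ on $[0,T)$, localization of the stochastic integral, and the passage $t\uparrow T$ via truncation or Fatou), but it is self-contained and adapts to more general comparison statements. Two small points on your write-up: the truncation should be $\Phi_n:=\max(\Phi,\Phi(-n))$ (or $\max(\Phi,-n)$), which is still non-decreasing and convex and bounded below — the phrase ``$\Phi-\Phi(-n)$'' as written is only a shift and does not produce a lower bound; and for the limit $t\uparrow T$ a clean shortcut is Jensen's inequality, which gives $u(t,y)\geq\Phi(y+m(T-t))$ and hence $\liminf u(t\wedge\tau_n,Y_{t\wedge\tau_n})\geq\Phi(Y_T)$ by continuity of $\Phi$ and of the paths of $Y$. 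With those repairs the proof is complete.
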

\begin{proof}
Without loss of generality we can assume the existence of an auxiliary Brownian motion $W$ independent of $Y$. Set then
$$Z^\pm_t:=Y_0+mt+M_t\pm\int_0^t(\rho^2-(\sigma^Y_t)^2)^{1/2}dW_t$$
and observe that
$\frac 1 2 (Z^+_t+Z^-_t)-Y_t=mt-\int_0^t b^Y_s ds\geq0.$
Moreover, since
$$[Z^\pm]_t=[M]_t+\int_0^t(\rho^2-(\sigma^Y_s)^2)ds=\rho^2t,$$
by L\'evy characterization theorem we get that $(\frac 1 \rho(Z^{\pm}_t-Y_0-mt))_{t\geq 0}$ is a standard Brownian motion. This in particular implies that $Z_t^+$ and $Z^-_t$ are both Gaussian random variables with mean $Y_0+mt$ and variance $\rho^2 t$.
We can thus use the properties of $\Phi$ to conclude 
$$\E[\Phi(Y_t)]\leq \E[\Phi(\frac 1 2 (Z^+_t+Z^-_t))]\leq \frac 1 2 \E[\Phi(Z^+_t)+\Phi(Z^-_t)]=\E[\Phi(V)].$$
\end{proof}

We are now ready to prove Theorem~\ref{thm5}. The proof is an adaptation  of the proof of Lemma C.1 in \cite{FL:16} to the current setting.  

\begin{proof}[Proof of Theorem~\ref{thm5}]
Fix $T\geq 0$ and note that setting $f_s:=\langle P_{T-s}1, \,\cdot\, \rangle$, by \eqref{martprob} we have that
$$N^{f_s}_t = {f_s}(X_t) - {f_s}(\nu) - \int_0^t L{f_s}(X_r) dr$$
is a local martingale for each $s\in[0,T]$. By Lemma~\ref{IIlem5} we know that 
$$[N^{f_{s_1}},N^{f_{s_2}}]_t=2\int_0^t \Gamma(f_{s_1},f_{s_2})(X_r)dr
=\int_0^t \langle \alpha (f_{s_1})\alpha (f_{s_2}),X_t \rangle dt.$$
Set then $Y_t:=\langle P_{T-t}1,X_t \rangle - A_t$ for
$
A_t :=\langle P_T1,X_0 \rangle + \int_0^t \langle P_{T-s}1,b\rangle ds.
$
Since
$$d\langle P_{T-t}1,X_t \rangle=-\langle B_1P_{T-t}1,X_t \rangle dt
+ L f_t(X_t) dt+(\text{local martingale})$$
we know that $Y$ is a local martingale whose quadratic variation is given by
 $$d[Y]_t = \langle \alpha (P_{T-t}1)^2,X_t \rangle dt=:\sigma_t^2 dt.$$
Using that $\sigma_t^2
\leq(\sup_{t\in[0,T]} \|\alpha\|\| P_{T-t}1\|)\langle P_{T-t}1,X_t \rangle$ and the fact that $A$ is deterministic and thus bounded on $[0,T]$ we get
\begin{equation}\label{ineq:alp}
\sigma_t^2
\leq C(1+|Y_t|).
\end{equation}
Following now the proof of Lemma C.1 in \cite{FL:16} we can apply Lemma~\ref{lem1} to conclude that
$$\mathbb{E}[e^{\varepsilon |Y_T|}]<\infty$$
for some $\e>0$.
The claim follows by noting that   $\langle 1,X_T \rangle=Y_T$.
\end{proof}

\section{Affine diffusions on $M_+(E)$}
\label{sec:affine}

We now define operators of \emph{affine type} and consider solutions to the associated martingale problems. As we will show these solutions constitute a subclass of polynomial processes, which correspond to classical measure-valued branching Markov diffusions analyzed e.g.~in~\cite{F:88, E:00, L:10} and thus range in the class of measure-valued affine processes. They can be considered as 
generalizations of the Dawson-Watanabe superprocess (in the terminology of~\cite{E:00}\footnote{Note that in \cite{L:10}  ``Dawson-Watanabe superprocess'' is used for a class of measure-valued branching processes which can also exhibit jumps. Essentially the class of processes that we obtain here is the subset of processes with continuous trajectories therein.}), also called super-Brownian motion,  where the constant diffusion coefficient is replaced by a function and where the spatial motion is governed by an operator satisfying the positive minimum principle. As it is well-known for affine processes, additionally to the moment formula, the Laplace transform of the process' marginals is exponentially affine in the initial state and the characteristic exponent can be computed by solving a Riccati partial differential equation, which we shall introduce in Section \ref{sec:Ric}.

\subsection{Operators of affine type and their characterization in the diffusion setting}

Let us here start by defining operators of affine type, inspired by the classical form of the infinitesimal generator of affine processes on finite dimensional state spaces, see, e.g.,  \cite{KW:71, DFS:03, CFMT:11, CKMT:16}.

\begin{definition}\label{affineop}
We say that a linear operator  $L:\Dcal\to \Mcal(M(E))$ is of \emph{affine type} on $M_+(E)$ if there exist maps $F: D \to \mathbb{R}$ and $R: D \to C_{\Delta}(E)$
such that 
\[
L \exp(\langle g, \fdot \rangle )(\nu)= (F(g) + \langle R(g), \nu \rangle) \exp(\langle g, \nu \rangle )
\]
for all $g \in D_-$ and $\nu \in M_+(E)$, where $D_-$ denotes all function in $D$ with values in $\mathbb{R}_-$.
\end{definition}

For a finite-dimensional diffusions it is well-known that its generator is of affine type if and only if the drift and diffusion coefficients are affine functions (see, e.g.,  \cite{KW:71, DFS:03}). The following assertion states the same result for the measure-valued setting. We report and prove it here for the reader convenience.
It implies in particular that (in the current diffusion setting) operators of affine type constitute a subclass of polynomial operators.

\begin{theorem}\label{Laffine}
A linear operator $L:\mathcal{D}\to \Mcal(M(E))$ is of affine type on $M_+(E)$ and its carr\'e-du-champs operator $\Gamma$ is an $M_+(E)$-derivation if and only if $L$ admits a representation
\begin{equation}\label{eq:op_affine}
\begin{split}
Lf(\nu) = &B_0(\partial f(\nu)) + \langle B_1(\partial f(\nu)),\nu\rangle\\
& + \frac{1}{2} \Big(Q_0(\partial^2 f(\nu)) + \langle Q_1(\partial^2 f(\nu)), \nu\rangle \Big), \quad f\in \Dcal,  \nu\in M_+(E)
\end{split}
\end{equation}
for some linear operators $B_0:D\to\R$, $B_1:D\to  C_\Delta(E)$, $Q_0:D\otimes D\to\R$, $Q_1:D\otimes D\to C_\Delta(E)$. The restriction of these operators on $D_-$ and $D_-\otimes D_-$, respectively, are uniquely determined by $L$ and the following relations hold for the functions $F$ and $R$ of Definition \ref{affineop}
\begin{align}
F(g)=B_0(g) +\frac{1}{2} Q_0(g \otimes g),\label{eq:F}\\
R(g)=B_1(g)+ \frac{1}{2} Q_1(g \otimes g) \label{eq:R}.
\end{align}
\end{theorem}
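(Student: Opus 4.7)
The forward direction $(\Leftarrow)$ is a direct computation. For $f_g(\nu) := e^{\langle g,\nu\rangle}$ with $g \in D_-$, the directional derivatives are $\partial f_g(\nu) = g\, f_g(\nu)$ and $\partial^2 f_g(\nu) = (g \otimes g)\, f_g(\nu)$ (the scalar $f_g(\nu)$ commutes with the symmetric tensor product of $g$ with itself on rank-one elements). Inserting these into \eqref{eq:op_affine} and pulling out $f_g(\nu)$ via the linearity of $B_0, B_1, Q_0, Q_1$ in their arguments immediately delivers the affine form with $F, R$ as in \eqref{eq:F}, \eqref{eq:R}. The assertion that $\Gamma$ is an $M_+(E)$-derivation follows from Theorem~\ref{Lpol}, since \eqref{eq:op_affine} is the special case $Q_2 \equiv 0$ of \eqref{eq:op}.

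For the converse, my plan is first to use the derivation property to reduce $L$ to a functional of $\partial f(\nu)$ and $\partial^2 f(\nu)$, and then to extract the coefficients $B_0, B_1, Q_0, Q_1$ by comparing the affine hypothesis with a scaling argument. By Lemma~\ref{lem2}, $\Gamma$ being a derivation is equivalent to $Lf(\nu) = 0$ whenever $\partial f(\nu) = 0$ and $\partial^2 f(\nu) = 0$; combined with linearity of $L$ (and $L1 = 0$) this produces linear maps $\mathcal{L}^1_\nu : D \to \mathbb{R}$ and $\mathcal{L}^2_\nu : D \otimes D \to \mathbb{R}$ such that
\[
Lf(\nu) = \mathcal{L}^1_\nu(\partial f(\nu)) + \mathcal{L}^2_\nu(\partial^2 f(\nu)), \quad f \in \mathcal{D},\ \nu \in M_+(E).
\]
Applying this to $f_g$ with $g \in D_-$ and using $\partial^k f_g(\nu) = g^{\otimes k} f_g(\nu)$, the scalar $f_g(\nu)$ factors out and comparison with the affine-type hypothesis yields
\[
\mathcal{L}^1_\nu(g) + \mathcal{L}^2_\nu(g \otimes g) = F(g) + \langle R(g), \nu\rangle, \quad g \in D_-,\ \nu \in M_+(E).
\]

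The second stage is to substitute $g \mapsto \epsilon g$ for small $\epsilon > 0$, which preserves membership in $D_-$ and, by linearity of $\mathcal{L}^i_\nu$, turns the left-hand side into $\epsilon\, \mathcal{L}^1_\nu(g) + \epsilon^2\, \mathcal{L}^2_\nu(g \otimes g)$. Matching the $\epsilon$ and $\epsilon^2$ coefficients then naturally defines
\[
B_0(g) := \mathcal{L}^1_0(g), \qquad Q_0(g \otimes g) := 2\, \mathcal{L}^2_0(g \otimes g),
\]
\[
\langle B_1(g), \nu\rangle := \mathcal{L}^1_\nu(g) - \mathcal{L}^1_0(g), \qquad \langle Q_1(g \otimes g), \nu\rangle := 2\bigl[\mathcal{L}^2_\nu(g \otimes g) - \mathcal{L}^2_0(g \otimes g)\bigr]
\]
on $D_-$ and $D_- \otimes D_-$, so that \eqref{eq:F} and \eqref{eq:R} hold. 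I then extend $B_0, B_1$ to all of $D$ by linearity via the decomposition $g = (g-M) - (-M)$ with $M \geq \|g\|$ (both pieces in $D_-$, using $1 \in D$), and extend $Q_0, Q_1$ to $D \otimes D$ by the polarization $g \otimes h = \tfrac{1}{4}[(g+h)^{\otimes 2} - (g-h)^{\otimes 2}]$. Plugging the resulting formulas for $\mathcal{L}^i_\nu$ back into the representation above then yields \eqref{eq:op_affine}. Uniqueness on $D_-$ and $D_- \otimes D_-$ is built in, since these coefficients are extracted as Taylor coefficients in $\epsilon$ of $F, R$, which are themselves determined by $L$.

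The main technical obstacle will be the application of $Lf(\nu) = \mathcal{L}^1_\nu(\partial f(\nu)) + \mathcal{L}^2_\nu(\partial^2 f(\nu))$ at $f = f_g$, which is not itself an element of $\mathcal{D}$. Since $g \in D_-$ makes $f_g \in (0,1]$ on $M_+(E)$, I would handle this by approximating $f_g$ with cylindrical functions $\phi_n(\langle g,\cdot\rangle) \in F^D$, where $\phi_n \in C_0^\infty(\mathbb{R})$ agrees with $e^y$ on $[-n, 0]$. On any bounded set $\{\nu(E) \leq R\}$ the approximants, together with their first two directional derivatives, coincide with $f_g$ and its derivatives once $n > \|g\| R$, so the identity passes to the limit and transfers to $f_g$.
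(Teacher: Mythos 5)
Your proof is correct and follows essentially the same route as the paper's: the forward direction is the identical direct computation, and the converse likewise rests on Lemma~\ref{lem2} combined with separating the degree-one and degree-two homogeneous parts of $F$ and $R$ by evaluating at scalar multiples of $g$ (your $\epsilon$-coefficient matching is equivalent to the paper's evaluation at $g$ and $2g$, which also closes the one point you leave implicit, namely that $\nu\mapsto\mathcal{L}^1_\nu(g)-\mathcal{L}^1_0(g)$ is represented by the $C_\Delta(E)$-function $2R(g)-\tfrac{1}{2}R(2g)$, and similarly for $Q_1$). The only difference is organizational: the paper defines the candidate operators directly from $F,R$ and then proves their linearity via Lemma~\ref{lem2}, whereas you extract the linear functionals $\mathcal{L}^i_\nu$ first and identify them afterwards.
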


\begin{proof}
Assume first that $L$ satisfies \eqref{eq:op_affine}. Then 
for $f(\nu)=\exp(\langle g, \nu\rangle)$ with $g \in D_-$ and $\nu \in M_+(E)$ we have
$$
L \exp (\langle g, \fdot \rangle)(\nu) = \exp (\langle g, \nu \rangle) (B_0(g) + \langle B_1(g), \nu \rangle   + \frac{1}{2}( Q_0(g \otimes g) + \langle Q_1(g \otimes g), \nu \rangle)). 
$$
Hence $L$ is an affine operator with $F(g)=B_0(g)+\frac{1}{2}Q_0(g \otimes g)$ and $R(g)= B_1(g)+ \frac{1}{2}Q_1(g \otimes g)$. Hence $L$ is of affine type. 
Moreover, a direct calculation yields for all $p, q \in \mathcal{D}$ 
\[
\Gamma(p,q)(\nu) = Q_0\left(\partial p(\nu)\otimes \partial q(\nu)\right)+
\left\langle Q_1(\partial p(\nu)\otimes \partial q(\nu)), \nu\right\rangle\qquad\text{for all $\nu\in M_+(E)$,}
\]
which is an $M_+(E)$-derivation.

Conversely, assume that $L$ is of affine type and that its carré-du-champs operator $\Gamma$ is an $M_+(E)$-derivation.
We now explain how the affine property and the fact that $\Gamma $ is a derivation imply the form \eqref{eq:op_affine} on $f(\nu) =\exp(\langle g, \nu \rangle)$ for $g\in D_-$. The extension to all of $\mathcal{D}$ follows then similarly as in the proof of Theorem \ref{Lpol}.
Observe that since $L$ satisfies the affine property it holds that
\[
L \exp(\langle g, \nu \rangle )= (F(g) + \langle R(g), \nu \rangle) \exp(\langle g, \nu \rangle ),
\]
for each $g\in D_-$.
Define then
\begin{align*}
B_0(g)
&:=2F(g)-\frac 1 2 F(2g),&
 B_1g&:= R(g)-\frac 1 2 R(2g),\\
 Q_0(g\otimes g)
&:=F(2g)-2F(g),&
 Q_1(g\otimes g)&:= R(2g)-2R(g),
\end{align*}
and observe that the claim follows by showing that these operators are linear. Set $p_g(\nu):=\exp(\langle g,\nu\rangle)$ and note that
\begin{align*}
B_0(g)+\langle B_1(g),\nu\rangle&=L(2p_gp_g(\nu)^{-1}-\frac 1 2 p_g^2p_{g}(\nu)^{-2})(\nu),\\
Q_0(g\otimes g)+\langle Q_1(g\otimes g),\nu\rangle&=L(p_g^2p_{g}(\nu)^{-2}-2p_gp_g(\nu)^{-1})(\nu),
\end{align*}
for each $g\in D_-$. Fix now $g_i,f_i\in D_-$ and $\alpha_i,\beta_i\in \R$ such that $\sum \alpha_i g_i=\sum \beta_i f_i$ and note that
\begin{align*}
&\sum \alpha_i B_0(g_i)-\sum \beta_i B_0(f_i)
+\langle \sum \alpha_i B_1(g_i)-\sum \beta_i B_1(f_i),\nu\rangle
=Lp(\nu)
\end{align*}
for $p=\alpha_i (2p_{g_i}p_{g_i}(\nu)^{-1}-\frac 1 2 p_{g_i}^2p_{g_i}(\nu)^{-2})
-\beta_i (2p_{f_i}p_{f_i}(\nu)^{-1}-\frac 1 2 p_{f_i}^2p_{f_i}(\nu)^{-2})$. Since $\partial p(\nu)=0$ and $\partial p^2(\nu)=0$ the linearity of $B_0$ and $B_1$ follows by Lemma~\ref{lem2}. Similarly, 
fix $g_i,f_i\in D_-$ and $\alpha_i,\beta_i\in \R$ such that $\sum \alpha_i g_i\otimes g_i=\sum \beta_i f_i\otimes f_i$ and note that
\begin{align*}
&\sum \alpha_i Q_0(g_i\otimes g_i)-\sum \beta_i Q_0(f_i\otimes f_i)
+\langle \sum \alpha_i Q_1(g_i\otimes g_i)-\sum \beta_i Q_1(f_i\otimes f_i),\nu\rangle
=Lp(\nu)
\end{align*}
for $p=\alpha_i ( p_{g_i}^2p_{g_i}(\nu)^{-2}-2p_{g_i}p_{g_i}(\nu)^{-1})
-\beta_i (p_{f_i}^2p_{f_i}(\nu)^{-2}-2p_{f_i}p_{f_i}(\nu)^{-1})$. Since $\partial p(\nu)=0$ and $\partial p^2(\nu)=0$ the linearity of $Q_0$ and $Q_1$ follows by Lemma~\ref{lem2}.
\end{proof}

We  now introduce the subclass of \emph{measure-valued affine diffusions} via affine type operators.

\begin{definition}
Let $L$ be of affine type on $M_+(E)$. Then any continuous $M_+(E)$-valued solution to the martingale problem for $L$ is called a
\emph{measure-valued affine diffusion}.
\end{definition}

\subsection{Laplace transform and Riccati (partial) differential equations}\label{sec:Ric}
Due to the additional properties of affine diffusions, not only the moment formula holds true but also the Laplace transform can be computed explicitly via non-linear partial differential equations of Riccati type. We start here by introducing the corresponding solution concept. Recall that we  refer to differential equations as ODEs even though they often correspond to PDEs or PIDEs depending on the state space and the involved operators.

\begin{definition}\label{def:solaffine}

Let $R$ be given by \eqref{eq:R}. Then we call  a function $t \mapsto \psi_t$ with values in $D$ a solution to the Riccati ODE
\[
\partial_t \psi_t =  R(\psi_t) , \quad \psi_0=g \in D,	
\]
if for every $t >0$ it holds
\begin{align}\label{eq:ODERic}
\langle \psi_t, \nu \rangle =\langle g, \nu \rangle + \int_0^t \langle R(\psi_s), \nu \rangle ds
\end{align}
for all $\nu \in M_+(E^{\Delta})$.
\end{definition}

Note that the reason why \eqref{eq:ODERic} is called Riccati differential equation is because $g \mapsto R(g)$ is a quadratic function as seen from \eqref{eq:R}.

\begin{remark}
Similarly as for the moment formula note that 
the above solution concept reduces to a more classical solution if we take $\nu=\delta_{x} $ with $x\in E$, Indeed,  \eqref{eq:ODERic}  can then be transformed into
\begin{align*}
\psi_t(x)= g(x)+ \int_0^t R(\psi_s)(x) ds
\end{align*}
and thus reduces to a classical (except of the integral form) solution of a Riccati PDE. 

Note that in contrast to strong solutions, which were in the current context for instance considered in \cite[Appendix]{I:86} or \cite[Section 7.1]{L:10} and where $\psi$ is required to be a continuously (Fr\'echet) differentiable
curve $\psi_t: \mathbb{R}_+ \to D$ , 
Definition \ref{def:solaffine} corresponds to an (analytically) weak solution concept. 

\end{remark}

In the following we prove the exponential affine property of the Laplace transform.

\begin{theorem}\label{thm_affine}
Let $L$ be an operator of affine type with functions $F$ and $R$. Suppose that an $M_+(E^{\Delta})$-valued solution $(X_t)_{t \geq 0} $ to the martingale problem for $L$ exists.
Assume furthermore that there is a solution  in the sense of Definition \ref{def:solaffine} of the Riccati ODE on $[0,T]$ given by
\begin{equation}\label{ODEaffine}
\partial_t \psi_t =  R(\psi_t), \qquad 
\psi_0=g \in D_{-},				
\end{equation}
which takes values in $D_{-}$ where $D_{-}$ denotes all functions in $D$ with values in $\mathbb{R}_{-}$. Define furthermore
\begin{equation}\label{eq:phi}
\phi_t=\int_0^t F(\psi_s)ds
\end{equation}
and suppose it takes values in $\mathbb{R}_-$.
Then, for all $0\leq t\leq T$ the representation
\begin{equation}\label{eq:Laplacetrans}
\mathbb{E}\left[\exp( \langle g, X_T \rangle) \,\big|\, \mathcal{F}_t \right ]= \exp(\phi_{T-t}+ \langle \psi_{T-t}, X_t \rangle)
\end{equation}
 holds almost surely. 
\end{theorem}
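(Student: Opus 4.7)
The plan is to show that the process $M_t := \exp(\phi_{T-t} + \langle \psi_{T-t}, X_t \rangle)$, $t \in [0,T]$, is a true $(\mathcal{F}_t)$-martingale. Since $\psi_0 = g$ and $\phi_0 = 0$ one has $M_T = \exp(\langle g, X_T \rangle)$, so the martingale identity $\mathbb{E}[M_T \mid \mathcal{F}_t] = M_t$ is exactly the claimed representation \eqref{eq:Laplacetrans}. Note that since $\psi_{T-t}$ takes values in $D_-$ and $\phi_{T-t} \leq 0$ by assumption, $M$ takes values in $(0,1]$; hence once local martingality is established, the martingale property follows from boundedness.

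The algebraic heart of the argument is the formal identity $\partial_t f_t + L f_t = 0$ for $f_t(\nu) := \exp(\phi_{T-t} + \langle \psi_{T-t}, \nu \rangle)$. Indeed, Definition~\ref{affineop} gives
\[
L f_t(\nu) = \bigl(F(\psi_{T-t}) + \langle R(\psi_{T-t}), \nu \rangle\bigr)\, f_t(\nu),
\]
while the Riccati equation \eqref{ODEaffine} and the definition \eqref{eq:phi} of $\phi$ yield
\[
\partial_t f_t(\nu) = -\bigl(F(\psi_{T-t}) + \langle R(\psi_{T-t}), \nu \rangle\bigr)\, f_t(\nu),
\]
so the two terms cancel. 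If $f_t$ were in the domain $\mathcal{D}$ and $t \mapsto f_t$ were smooth, one could apply the martingale problem to conclude immediately.

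The technical obstruction is that $\nu \mapsto \exp(\langle \psi_{T-t}, \nu \rangle)$ is not cylindrical in the $C^\infty_0$-sense, so $f_t \notin F^D$. The remedy is a cutoff: pick $\chi_n \in C^\infty_0(\mathbb{R})$ with $\chi_n(y) = e^y$ on $[-n, 1]$ and set $f^n_t(\nu) := e^{\phi_{T-t}} \chi_n(\langle \psi_{T-t}, \nu \rangle)$, which lies in $F^D \subseteq \mathcal{D}$ for each fixed $t$. To handle the time dependence with only the weak regularity granted by Definition~\ref{def:solaffine}, I would apply the time-independent martingale problem to $f^n_{s_i}$ on each interval of a partition $0=s_0<\cdots<s_N=t$, telescope, and pass to the mesh-zero limit. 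This should yield a decomposition
\[
f^n_t(X_t) - f^n_0(X_0) = \int_0^t \bigl(\partial_s f^n_s + L f^n_s\bigr)(X_s)\,ds + N^n_t
\]
with $N^n$ a local martingale. On the stochastic set $\{s : \langle \psi_{T-s}, X_s\rangle \geq -n\}$, the integrand coincides with $(\partial_s f_s + L f_s)(X_s) = 0$ by the computation above; on the complement one localizes via $\tau_n := \inf\{s \leq T : \langle 1, X_s\rangle \sup_{s \in [0,T]}\|\psi_s\| > n\}$.

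Finally, using Lemma~\ref{lem:martingality}\ref{it3i} to bound $\mathbb{E}[\langle 1, X_s\rangle^2]$ exponentially, one sees that $\tau_n \uparrow \infty$ almost surely, and that $f^n_{t \wedge \tau_n}(X_{t \wedge \tau_n}) \to M_{t}$ in $L^1$ by bounded convergence (since $|M| \leq 1$). Combining this with the vanishing of the drift up to $\tau_n$ for $n$ large enough shows that $M$ is a local martingale, hence, by boundedness, a genuine martingale. Taking conditional expectations completes the proof. The main obstacle is genuinely the bookkeeping in the previous paragraph: simultaneously justifying the non-autonomous extension of the martingale problem from the weak absolute continuity of $t \mapsto \psi_t, \phi_t$ furnished by Definition~\ref{def:solaffine}, and verifying that the cutoff error terms contribute zero in the localized limit.
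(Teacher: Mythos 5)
Your proposal is correct and follows essentially the same route as the paper: establish that $M_t=\exp(\phi_{T-t}+\langle\psi_{T-t},X_t\rangle)$ is a local martingale via the cancellation $\partial_t f_t+Lf_t=0$, then upgrade to a true martingale using that $\psi\in D_-$ and $\phi\le 0$ force $M\le 1$. The only difference is that the paper delegates the local-martingale step to the argument of \cite[Theorem 7.13]{L:10}, whereas you sketch the cutoff and localization bookkeeping directly.
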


\begin{proof}
Due to the fact that $X$ solves the martingale problem for $L$, we can deduce 
analogously as in the proof of \cite[Theorem 7.13]{L:10} that $\exp(\phi_{T-t} + \langle \psi_{T-t}, X_t \rangle) $ is a local martingale. By the assumptions that $\psi$ takes values in $D_-$ and $\phi \in \mathbb{R}_-$, this is actually a true martingale as it is bounded by $1$ and we thus have
\[
\exp(\phi_{T-t} + \langle \psi_{T-t}, X_t \rangle)=\mathbb{E}[\exp(\phi_0+\langle \psi_{0}, X_T \rangle) | \mathcal{F}_t]=\mathbb{E}[\exp(\langle g, X_T \rangle) | \mathcal{F}_t],
\]
proving the claim.
\end{proof}

\subsection{Existence and uniqueness in law}

We can now combine Theorem \ref{Laffine}, Theorem \ref{main1} and Theorem \ref{thm_affine} to get the following existence and uniqueness in law result for measure-valued affine diffusions. Recall that $D\subseteq C_\Delta(E)$ be a dense linear subspace containing the constant function~$1$.

\begin{corollary}\label{mainaff}
Let  $L:\Dcal\to C(M_+(E))$ be a linear operator of form~\eqref{eq:op_affine}, where
\begin{enumerate}
\item\label{it4ivaf} $B_0: D \to \mathbb{R}$ is given by $B_0(g) =\langle g,b\rangle$ with $b \in M_+(E^{\Delta})$;
\item\label{it4vaf} $B_1: D \to C_{\Delta}(E)$ satisfies the positive minimum principle on $E^{\Delta}$;
\item\label{it4iaf} $Q_0 \equiv 0$;
\item\label{it4iiaf} $Q_1$ is of the form 
\[
Q_1(g)= \alpha \diag(g), \quad \text{ that is,} \quad  Q_1(g)(x)=\alpha(x)g(x,x), \quad g \in D \otimes D
\]where $\alpha \in C_{\Delta}(E)$ with values in $\mathbb{R}_+$.
\end{enumerate}
Then $L$ is of affine type on $M_+(E)$ and its martingale problem has an $M_+(E^{\Delta})$-valued solution with continuous paths for every initial condition $\nu\in M_+(E^{\Delta})$.

If additionally, the measure $b$ and the initial condition $\nu$ lie in $ M_+(E)$
and if there exists some function $m \in C_{\Delta}(E)$ such that
 $$\text{bp-}\lim_{n \to \infty}(B_1(1-g_n)-m(1-g_n))\leq 0,$$
   for some sequence 
   $g_n \in D \cap C_0(E)$ satisfying $\text{bp-}\lim_{n \to \infty}g_n= 1_E$,
then any solution to the martingale problem takes values  in $M_+(E)$.

If, in either of the $M_+(E)$ or $M_+(E^{\Delta})$ cases,
\begin{align}\label{eq:Ricconcrete}
\partial_t \psi_t= B_1 \psi_t + \frac{1}{2}\alpha \psi_t^2, \quad \psi_0=g \in D_{-}
\end{align}
has a solution 
in the sense of Definition \ref{def:solaffine} with values in $D_{-}$ for all $t \geq 0$, then the corresponding martingale problem is well-posed.
\end{corollary}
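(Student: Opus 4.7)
The plan is to combine Theorems \ref{Laffine}, \ref{main1}, and \ref{thm_affine} with a standard Laplace-functional and Markov-extension argument, applied in the degenerate case where no strongly continuous positive groups are present.

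First I would establish the affine-type property together with existence. Reading off from \eqref{eq:op_affine}, the carré-du-champ of $L$ is
\[
\Gamma(p,q)(\nu) = \langle Q_1(\partial p(\nu)\otimes \partial q(\nu)), \nu\rangle,
\]
which is manifestly an $M_+(E)$-derivation. Theorem \ref{Laffine} then gives that $L$ is of affine type with $F(g)=\langle g, b\rangle$ and $R(g)(x) = B_1 g(x) + \tfrac12 \alpha(x) g(x)^2$, so \eqref{eq:Ricconcrete} is indeed the Riccati ODE associated to $L$. For the existence of an $M_+(E^\Delta)$-valued continuous solution I would apply Theorem \ref{main1} with $n=0$ (no operators $A_i$) and $C \equiv 0$ (the trivial $(\beta,\pi)$-representation $\beta \equiv 0$, $\pi \equiv 0$, which satisfies Definition~\ref{def2} vacuously). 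Under these choices the hypotheses of Theorem \ref{main1} reduce precisely to conditions (i)--(iv) of the corollary, since the condition on $B_1 - \tfrac12\sum_i A_i^2$ becomes the positive minimum principle for $B_1$ itself. The $M_+(E)$-valued refinement is then inherited directly from the second half of Theorem \ref{main1}, whose extra hypothesis appears verbatim in the corollary.

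For uniqueness I would apply Theorem \ref{thm_affine}. The Riccati hypothesis is supplied by assumption; the missing sign condition on $\phi_t$ is immediate: $Q_0 \equiv 0$ forces $F(g) = \langle g,b\rangle$, whence
\[
\phi_t = \int_0^t \langle \psi_s, b\rangle\, ds \leq 0,
\]
since $b \in M_+(E^\Delta)$ and $\psi_s \in D_-$. Theorem \ref{thm_affine} then produces, for every deterministic initial condition $\nu$ in $M_+(E^\Delta)$ (respectively $M_+(E)$) and every $g \in D_-$, the exponential-affine formula
\[
\mathbb{E}\!\left[\exp(\langle g, X_t\rangle)\,\big|\,X_0 = \nu\right] = \exp\!\bigl(\phi_t + \langle \psi_t, \nu\rangle\bigr).
\]

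To conclude, I would upgrade this to full uniqueness in two standard steps. Since $D$ is dense in $C_\Delta(E)$ and contains the constant $1$, the cone $-D_-$ of non-negative elements of $D$ is dense in the non-negative part of $C_\Delta(E)$ and is therefore a determining class for Laplace functionals of random elements of $M_+(E^\Delta)$; the displayed formula thus characterises the law of $X_t$ uniquely for every deterministic initial state. Combining uniqueness of one-dimensional marginals with the already-established existence, Theorem~4.4.2 of \cite{EK:09} promotes this to uniqueness of all finite-dimensional marginals, i.e.\ well-posedness of the martingale problem on either state space. The step that I expect to require most care is this final Laplace-functional separation, because $D_-$ is only a convex cone rather than an algebra and so a direct Stone--Weierstrass argument is unavailable; it has to be handled by a monotone convergence and density argument within the non-negative part of $C_\Delta(E)$, using denseness of $D$ together with the standard uniqueness theorem for Laplace functionals of finite random measures.
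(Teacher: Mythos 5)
Your proposal is correct and follows essentially the same route as the paper: existence via Theorem~\ref{Laffine} combined with Theorem~\ref{main1} (specialised to $n=0$ and the trivial $(\beta,\pi)$-representation, exactly as the paper implicitly does), and well-posedness via Theorem~\ref{thm_affine} after noting $F(g)=\langle g,b\rangle$ maps $D_-$ to $\mathbb{R}_-$, followed by density of $D$ in $C_\Delta(E)$ and \cite[Theorem 4.4.2]{EK:09}. The only difference is that you spell out details the paper leaves implicit (the degenerate choice of $A_i$ and $C$, and the Laplace-functional determining-class argument), which does not change the substance.
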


\begin{proof}
The existence result is just a consequence of Theorem \ref{Laffine} and Theorem \ref{main1}. Concerning the well-posedness of the martingale problem, 
observe that the function $F$ associated to the operator $L$ of affine type is of the form
\[
F(g)=B_0(g)= \langle g, b\rangle
\]
for $b \in M_+(E^{\Delta})$. Hence, $F$ maps $\psi_t \in D_{-}$ to $\mathbb{R}_-$ and 
 the assumptions of Theorem \ref{thm_affine} are satisfied. Hence the Laplace transform of $X_t$ is  given by 
\[
\mathbb{E}[\exp(\langle g, X_t \rangle)]= \exp(\phi_t+ \langle \psi_t, X_0 \rangle).
\]
Since $ g \in D_{-}$ was arbitrary and $D$ is dense in $C_{\Delta}(E)$, the law of 
$X_t$ is
uniquely determined for all $t \geq 0$. From \cite[Theorem 4.4.2]{EK:09}, we
infer that $X$ is a Markov process  thus the unique solution to the martingale problem associated to $L$.
\end{proof}

\begin{remark}\label{rem:discussion}
Let us here explain how the above result relates to the literature on $(\xi,\phi)$- (Dawson-Watanabe) - superprocesses as defined in \cite[p.42]{L:10}. Indeed, $\xi$ is the spatial motion which is described by its infinitesimal generator $\mathcal{A}$ that can be obtained from the current $B_1$  via 
\[
\mathcal{A}g=B_1g - m g,
\]
as explained in Remark \ref{rem1}. The branching mechanism $\phi$ (not to be confused with the function defined via \eqref{eq:phi}) then corresponds to
\[
\phi(x,g)=-m(x)g(x)+\frac{1}{2}\alpha(x) g(x)^2.
\]
Hence, in the notation of \cite[Eq.(2.27)]{L:10}, $b=-m$ and $c=\frac {1}{2}\alpha$. The $B_0$-part can be added as an immigration part as considered in \cite[Section 9.3]{L:10}.

This identification allows us to apply the results of \cite{L:10} to our setup. In view of the Riccati equations this means in particular that \eqref{eq:Ricconcrete} corresponds (up to a change of sign) to 
\cite[Eq.(7.4)]{L:10}, which is of the form
\begin{align*}
\partial_tV_tf&= \mathcal{A}V_tf- \phi(\fdot, V_tf)
=\mathcal{A} V_tf + m V_tf-\frac{1}{2} \alpha (V_tf)^2=B_1 V_tf -\frac{1}{2} \alpha (V_tf)^2,\\
V_0f &= f (=-g \text{ in our notation}).
\end{align*}
Supposing that $\mathcal{A}$ generates a Feller semigroup $(P_t)_{t\geq0}$ and that $m=-b \in C_{\Delta}(E)$, then by Theorem \cite[Theorem 7.11]{L:10}  this equation is in turn equivalent to the integral equation \cite[Eq.(5.32)]{L:10} 
\[
V_tf= P_tf- \int_0^t  P_{t-s}\phi(\fdot, V_sf) ds,
\]
which corresponds to a mild solution.
As $\psi_t=-V_t f$ with initial value $\psi_0=g=-f$ and denoting the positive semigroup generated by $B_1$ with $Q$ we thus get a similar equivalence between our solution concept for \eqref{eq:Ricconcrete} and the following integral equation in weak form (corresponding to a weakly mild solution)
\begin{align*}
\langle \psi_t, \nu \rangle&= \langle Q_t g, \nu \rangle+ \frac{1}{2}\int_0^t \langle Q_{t-s} \alpha \psi_s^2, \nu \rangle ds,\\
&=\langle P_t g, \nu \rangle+ \int_0^t \langle P_{t-s} (m \psi_s + \frac{1}{2}\alpha \psi_s^2), \nu \rangle ds.
\end{align*}
By \cite[Corollary 5.17]{L:10} which is a consequence of \cite[Proposition 2.20 and (Eq. 2.33)]{L:10}  it admits a unique negative solution in the domain of $\mathcal{A}$ (in the strong sense as generator of the Feller semigroup $(P_t)_{t\geq 0}$). The latter property is a consequence of \cite[Theorem 7.11]{L:10}. From this discussion we can now deduce the following corollary.
\end{remark}

\begin{corollary}\label{cor:solRic}
Let the Conditions \ref{it4ivaf}-\ref{it4iiaf} of Corollary \ref{mainaff} be satisfied and suppose additionally that $B_1$ is the generator of a strongly continuous positive semigroup.
Then \eqref{eq:Ricconcrete} has a unique solution in the sense of Definition \ref{def:solaffine} with values in $D_-$ for all $t \geq 0$ and the martingale problem is well-posed.
\end{corollary}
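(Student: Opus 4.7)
The plan is to reduce the Riccati equation \eqref{eq:Ricconcrete} to the weakly mild integral formulation identified in Remark \ref{rem:discussion}, namely
\[
\langle \psi_t,\nu\rangle = \langle Q_t g,\nu\rangle + \frac{1}{2}\int_0^t \langle Q_{t-s}(\alpha \psi_s^2),\nu\rangle\,ds,\qquad \nu\in M_+(E^\Delta),
\]
where $(Q_t)_{t\geq 0}$ is the strongly continuous positive semigroup generated by $B_1$. The equivalence between this integral equation and the weak formulation of Definition \ref{def:solaffine} is obtained by combining the semigroup property of $(Q_t)$ with the integration-by-parts identity $\partial_s (Q_{t-s}\psi_s) = -Q_{t-s}B_1\psi_s + Q_{t-s}\partial_s\psi_s$, as in the classical variation-of-constants argument.

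For existence I would use a Picard iteration, setting $\psi^{(0)}_t := Q_t g$ and
\[
\psi^{(n+1)}_t := Q_t g + \frac{1}{2}\int_0^t Q_{t-s}\bigl(\alpha (\psi^{(n)}_s)^2\bigr)\,ds.
\]
Since $g\in D_-$, the positivity of $(Q_t)$ yields $Q_t g\le 0$; and whenever $\psi^{(n)}_s\le 0$ we have $\alpha(\psi^{(n)}_s)^2\ge 0$, so $Q_{t-s}(\alpha(\psi^{(n)}_s)^2)\ge 0$ and hence $\psi^{(n+1)}_t\le Q_tg\le 0$ by induction. A uniform sup-norm bound on compact intervals follows from the local boundedness of $(Q_t)$, the boundedness of $\alpha$, and a Gronwall-type estimate applied to $\|\psi^{(n)}_t\|$, allowing passage to the limit. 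The limit $\psi$ is non-positive and solves the integral equation; regularity of $t\mapsto\psi_t$ on $D$ (so that \eqref{eq:Ricconcrete} holds in the sense of Definition \ref{def:solaffine}) comes from the smoothing of $(Q_t)$ and the invariance of the domain of $B_1$ under the semigroup.

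For uniqueness, given two $D_-$-valued solutions $\psi,\tilde\psi$, the difference satisfies
\[
\psi_t-\tilde\psi_t = \frac{1}{2}\int_0^t Q_{t-s}\bigl(\alpha(\psi_s+\tilde\psi_s)(\psi_s-\tilde\psi_s)\bigr)\,ds,
\]
and since $\alpha$ is bounded, $\psi_s,\tilde\psi_s$ are uniformly bounded on compact intervals, and $(Q_t)$ is locally bounded in operator norm, a straightforward Gronwall argument in the sup-norm forces $\psi\equiv\tilde\psi$. With a unique $D_-$-valued solution of \eqref{eq:Ricconcrete} in hand, well-posedness of the martingale problem is then an immediate application of Corollary \ref{mainaff}.

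The main obstacle I anticipate is the requirement that the solution actually take values in $D$ rather than merely in $C_\Delta(E)$, since $D$ is generally a strict dense subspace (typically the domain, or a core, of $B_1$). One has to verify that $D$ is stable under the semigroup $(Q_t)$ and under the nonlinearity $\psi\mapsto \alpha\psi^2$, which is not automatic. This is exactly the point handled in \cite[Corollary 5.17]{L:10} and \cite[Theorem 7.11]{L:10} via working with a Feller generator and its domain; in our more general setting one either replaces the Picard argument by Li's mild-solution construction (absorbing the $m$-part of the decomposition $B_1=\mathcal{A}+m$ from Remark \ref{rem1} and working with $(P_t)_{t\geq 0}$ associated with $\mathcal{A}$), or enlarges $D$ to the full generator domain of $B_1$, which also contains the constant function $1$ and is stable under $(Q_t)$.
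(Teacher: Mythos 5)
Your overall route --- pass to the weakly mild integral formulation with the semigroup $(Q_t)_{t\ge0}$ generated by $B_1$, then construct and compare solutions there --- is the same one the paper takes; its proof is essentially a two-line reduction: decompose $B_1 g=\mathcal{A}g+mg$ with $\mathcal{A}$ the generator of a Feller semigroup (Remark \ref{rem1}), and invoke Theorem 7.11 and Corollary 5.17 of \cite{L:10} as explained in Remark \ref{rem:discussion}. Your uniqueness argument via Gronwall and your closing observation that the $D$-valuedness must come from Li's construction are both fine.

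The genuine gap is in your existence step. From
\[
\psi^{(n+1)}_t = Q_t g + \frac{1}{2}\int_0^t Q_{t-s}\bigl(\alpha (\psi^{(n)}_s)^2\bigr)\,ds
\]
with $\alpha\ge0$ and $(\psi^{(n)}_s)^2\ge0$, positivity of $(Q_t)$ gives $Q_{t-s}(\alpha(\psi^{(n)}_s)^2)\ge0$, and therefore $\psi^{(n+1)}_t\ \ge\ Q_t g$, \emph{not} $\psi^{(n+1)}_t\le Q_t g$. The quadratic term pushes the iterates \emph{upward}, so your induction does not yield $\psi^{(n+1)}_t\le 0$; it fails already at $n=0$. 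This is not a cosmetic issue: the non-positivity of $\psi_t$ is exactly what is needed for Theorem \ref{thm_affine} (boundedness of the exponential local martingale) and hence for well-posedness, and it is precisely the delicate point that \cite[Proposition 2.20 and Corollary 5.17]{L:10} is designed to handle --- there one works with $V_tf=-\psi_t$, where the correction term is subtracted so that $V_tf\le P_tf$ is immediate, while the lower bound $V_tf\ge0$ requires the alternating/comparison argument of Li (or the probabilistic interpretation as $-\log$ of a Laplace functional), not naive Picard monotonicity. If you replace your iteration by Li's construction, as you suggest in your last paragraph, the proof closes and coincides with the paper's; as written, the sign-preservation claim is false and the existence argument does not stand on its own.
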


\begin{proof}
Since $B_1 $ is the generator of a strongly continuous positive semigroup, it can be decomposed into $B_1g=\mathcal{A}g+ mg$ where $m \in C_{\Delta}(E)$ and
 $\mathcal{A}$ is the (strong) generator of a Feller semigroup. Hence the assertion follows
 from Theorem 7.11 and Corollary~5.17 in \cite{L:10} as explained in Remark \ref{rem:discussion}. 
\end{proof}

\begin{remark}
Note that the conditions of Corollary \ref{cor:solRic} are in line with the ones of Theorem \ref{thm5} from which uniqueness in law can be deduced as well.
\end{remark}

Similarly as in the polynomial case we get a characterization of affine diffusions when $D=C_{\Delta}(E)$.

\begin{corollary}\label{mainthmaff}
Let $D=C_\Delta(E)$ and let $L:\Dcal\to C(M_+(E))$ be a linear operator.
Then $L$ is  of affine type on $M_+(E)$, there exists an $M_+(E^{\Delta})$-valued solution to the martingale problem for all initial conditions in $M_+(E^\Delta)$ and all solutions have continuous paths, if and only if $L$ satisfies \eqref{eq:op} with
\begin{enumerate}
\item\label{it8ivaf} $B_0: C_{\Delta}(E) \to \mathbb{R}$ is given by $B_0(g) =\langle  g,b\rangle$ with a Radon measure $b \in M_+(E^{\Delta})$;
\item\label{it8vaf} For every $g\in C_{\Delta}(E)$
\begin{equation}\label{eqn2af}
B_1g(x)=\int g(\xi)-g(x) \nu_B(x,d\xi)+m(x)g(x)
\end{equation}
for some non-negative finite kernel $\nu_B$ from $E^\Delta$ to $E^\Delta$ and some $m\in C_\Delta(E)$;
\item\label{it8iaf} $Q_0 \equiv 0$;
\item\label{it8iiiaf} $Q_1$ is of the form 
\begin{equation}\label{eqn4af}
Q_1(g)(x)=\alpha(x)g(x,x), \quad g \in C_\Delta(E) \otimes C_\Delta(E)
\end{equation}
where $\alpha \in C_{\Delta}(E)$ with values in $\mathbb{R}_+$.
\end{enumerate}

\noindent For the state space $M_+(E)$, the following equivalence holds.\\

\noindent A linear operator $L: \mathcal{D} \to C(M_+(E))$ is of affine type on $M_+(E)$, there exists an $M_+(E)$-valued solution to the martingale problem for all initial conditions in $M_+(E)$ and all solutions have continuous paths, if and only if $L$ satisfies \ref{it8ivaf}-\ref{it8iiiaf}, $b \in M_+(E)$, and 
$\nu_B$ is a kernel from $E$ to $E$.

Moreover, in either case 
\[
\partial_t \psi_t= B_1 \psi_t + \frac{1}{2}\alpha \psi_t^2, \quad \psi_0=g \in C_{\Delta}(E)
\]
has a unique solution  in the sense of Definition \ref{def:solaffine} with values in $D_{-}$ for all $t \geq 0$, which implies \eqref{eq:Laplacetrans} and well-posedness of the martingale problem.
\end{corollary}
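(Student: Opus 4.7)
The plan is to reduce the statement to results already established in the paper: Theorem \ref{Laffine} and Theorem \ref{mainthm} for the structural characterization, Corollary \ref{mainaff} for existence, and Corollary \ref{cor:solRic} together with Theorem \ref{thm_affine} for the Riccati equation and the Laplace transform. The affine case is essentially a specialization of the polynomial case where $Q_2 \equiv 0$ is imposed by definition.

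For the forward implication, suppose $L$ is of affine type with an $M_+(E^{\Delta})$-valued solution to its martingale problem existing for every initial condition in $M_+(E^{\Delta})$ and all such solutions being continuous. Lemma \ref{IIlem5} forces the carr\'e-du-champ operator to be an $M_+(E)$-derivation, so Theorem \ref{Laffine} yields the representation \eqref{eq:op_affine} with $Q_2\equiv 0$. Since the ambient form \eqref{eq:op_affine} is already a special case of the polynomial form \eqref{eq:op}, Theorem \ref{mainthm} applied in the case $D=C_\Delta(E)$ gives the precise expressions \ref{it8ivaf}--\ref{it8iiiaf} for $B_0,B_1,Q_0,Q_1$, the condition $Q_2\equiv 0$ translating to the trivial $(\beta,\pi)$-representation. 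The analogous statement on $M_+(E)$ is obtained in the same way using the second part of Theorem \ref{mainthm}.

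For the converse, assume \ref{it8ivaf}--\ref{it8iiiaf}. The representation \eqref{eqn2af} immediately implies that $B_1$ satisfies the positive minimum principle on $E^\Delta$: indeed, if $0\leq g\in C_\Delta(E)$ attains its infimum $0$ at $x$, then $B_1g(x)=\int g(\xi)\,\nu_B(x,d\xi)\geq 0$. Hence all hypotheses of Corollary \ref{mainaff} are in force and yield existence of an $M_+(E^\Delta)$-valued continuous solution; the $M_+(E)$-version follows as in the proof of Theorem \ref{mainthm} by choosing any sequence $g_n\in C_0(E)$ with $0\leq g_n\uparrow 1_E$ and invoking the second part of Corollary \ref{mainaff}.

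For well-posedness and the Laplace-transform identity we invoke Corollary \ref{cor:solRic}. Since $\nu_B$ is a finite kernel and $m\in C_\Delta(E)$, the operator $B_1$ in \eqref{eqn2af} is bounded on $C_\Delta(E)$ and hence generates a uniformly (thus strongly) continuous semigroup; by \cite[Theorem B.II.1.3]{arendt:86} this semigroup is positive because $B_1$ satisfies the positive minimum principle. Therefore Corollary \ref{cor:solRic} delivers a unique $D_-$-valued solution $\psi_t$ of \eqref{eq:Ricconcrete} for every $g\in C_\Delta(E)_-$. Because $F(g)=\langle g,b\rangle$ with $b\in M_+(E^\Delta)$, the function $\phi_t=\int_0^tF(\psi_s)\,ds$ stays in $\mathbb{R}_-$, so Theorem \ref{thm_affine} applies and yields \eqref{eq:Laplacetrans}; injectivity of the Laplace transform pins down the one-dimensional marginals, and Markovianity via \cite[Theorem 4.4.2]{EK:09} upgrades this to uniqueness in law.

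The main obstacle I anticipate is verifying that the Riccati solution remains in $D_-$ for all $t\geq 0$; this is precisely what Corollary \ref{cor:solRic} buys us, and, as explained in Remark \ref{rem:discussion}, ultimately relies on \cite[Corollary 5.17]{L:10}, which exploits the non-positive sign of the quadratic term $\tfrac{1}{2}\alpha\psi^2$ against the positive semigroup generated by $B_1$ via a comparison/fixed-point argument on the equivalent mild-form integral equation.
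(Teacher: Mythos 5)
Your proposal is correct and follows essentially the same route as the paper, which likewise reduces the equivalences to Theorem~\ref{Laffine} combined with Theorem~\ref{mainthm} and derives the Riccati/uniqueness statement from Corollary~\ref{cor:solRic} and Theorem~\ref{thm_affine}. The only cosmetic difference is that you verify the hypothesis of Corollary~\ref{cor:solRic} via boundedness plus the positive minimum principle (hence a positive semigroup), whereas the paper points to the equivalent decomposition $B_1g=\mathcal{A}g+mg$ with $\mathcal{A}$ a Feller jump generator.
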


\begin{proof}
All assertions except the very last one follow from combining Theorem \ref{Laffine} and Theorem \ref{mainthm}. The last one is a consequence of Corollary \ref{cor:solRic} since $B_1$ is  decomposed into $B_1g=\mathcal{A}g+mg$ where $\mathcal{A}g(x)=\int g(\xi)-g(x) \nu_{B}(x,d\xi)$  generates a Feller semigroup  corresponding to a pure jump process and $m \in C_{\Delta}(E)$.
\end{proof}

\section{Examples and applications}\label{sec:applications}

Having established the general setting for measure-valued affine and polynomial diffusions, we now consider several important examples. 

\subsection{Finite underlying space}

We start with 
a finite underlying space $E$ consisting of $m \geq 1$ points, i.e~ $E=\lbrace 1,...,m \rbrace$. Then $C_\Delta(E)=C(E)$ is finite-dimensional, hence any dense linear subspace is equal to the whole space.  In this setting, any $M_+(E)$-valued process is of the form $X_t=\sum_{i=1}^m Z_t^i\delta_i$ for some $\mathbb{R}^m_+$-valued process $Z=(Z^1,...,Z^m)$.
Hence, Theorem~\ref{mainthm} corresponds to the finite dimensional characterization of polynomial operators with state space $\mathbb{R}^m_+$, as proved in \cite[Proposition 6.4]{FL:16}.
Indeed, comparing with this proposition we recognize the following identifications:
\begin{enumerate}
\item the Radon measure $b\in M_+(E)$ corresponds to the vector $\beta_J \in \mathbb{R}^m_+$;
\item the operator $B_1$ corresponds to the submatrix $B_{JJ}^{\top}$  and \eqref{eqn2} translates to  positive off-diagonal elements;
\item the function $\alpha\in C_{\Delta}(E)$ with values in $\mathbb{R}_+$ corresponds to the vector $\phi\in \mathbb{R}^m_+$;
\item the functions $\pi$ and $\beta$ correspond to the matrix $\Pi$ and $\alpha\in\mathbb{S}^m$ while Condition \eqref{eq:matrix} translates to $\alpha +\operatorname{Diag}(\Pi^Tx)\operatorname{Diag}(x)^{-1}\in \mathbb{S}^m_+$ for all $x\in \mathbb{R}^m_{++}$.

\end{enumerate}

\subsection{Underlying space $E \subseteq \mathbb{R}$}

We consider here the case $E^{\Delta}=[a,b]\subseteq \mathbb{R}^\Delta$, for some $- \infty \leq a< b \leq \infty$ and set 
\[
D:= \{ f|_E \, : \, f \in C_c^{\infty}(\mathbb{R}) + \mathbb{R} \}.
\]
Recall that we identify $E^{\Delta}$ with $E$, when $E$ is compact corresponding to the case $a > -\infty$ and $b < \infty$.
Our goal is to analyze Theorem \ref{main1} in this setting.  We therefore start by considering generators of strongly continuous positive groups on $C_{\Delta}(E)$ such that their domain contains both $D$ and $A(D)$. For results on generators of strongly continuous positive groups we refer to Appendix \ref{app:C}, in particular 
to Definition \ref{def:admissible} and Definition~\ref{def:iso} for the notions of \emph{admissibility} and \emph{lattice isomorphisms}. 
In the following lemma we denote by $C_{\Delta}^k(E)$ the restriction of $C^k(\mathbb{R})$-functions to $E$, i.e.~$C_{\Delta}^k(E)=\{ f|_E \, : \, f \in C^k(\mathbb{R})\cap C_{\Delta}(\mathbb{R})\}$.

\begin{lemma}\label{lem:Ashape} Let $V$ be a lattice isomorphism on $C_\Delta(E)$, which additionally leaves 
$C_{\Delta}^1(E)$ and $C^2_{\Delta}(E)$-functions invariant.
Moreover, let $h \in C^1_{\Delta}(E)$ and 
$\tau \in C_{\Delta}^1(E)$ be an admissible function in the sense of  Definition \ref{def:admissible}. 
Define $\delta_{\tau}$ via \eqref{eq:fshape}.
Then the operator $A$ given by
\begin{equation}\label{eq:Ashape}
A:=V^{-1}\delta_{\tau} V + h, 
\end{equation}
satisfies the conditions of Theorem \ref{IIL:KKT2}. That is, $A$ is the generator of a strongly continuous positive group of $C_\Delta(E)$ and its domain contains both $D$ and $A(D)$.
\end{lemma}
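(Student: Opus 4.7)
The plan is to establish $A$ as a bounded multiplicative perturbation of a similarity transform of $\delta_\tau$, and then to chase regularity through the decomposition $A=V^{-1}\delta_\tau V+h$ to pin down its domain. I would structure it as follows.

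First, I would invoke the characterization recalled in Appendix \ref{app:C} (Arendt's result referenced in Remark \ref{rem:genposgroup}): the admissibility of $\tau\in C^1_\Delta(E)$ ensures that $\delta_\tau$ (defined by \eqref{eq:fshape}) is the generator of a strongly continuous positive group $(T_t)_{t\in\R}$ on $C_\Delta(E)$, whose domain contains $C^1_\Delta(E)$ and on which $\delta_\tau$ acts as (essentially) $f\mapsto \tau f'$. Since $V$ is a lattice isomorphism, $V$ and $V^{-1}$ are bounded bijections mapping non-negative functions to non-negative functions. Consequently $(V^{-1}T_tV)_{t\in\R}$ is a strongly continuous positive group on $C_\Delta(E)$ whose generator is $V^{-1}\delta_\tau V$ with domain $V^{-1}(\mathrm{Dom}(\delta_\tau))$; in particular, by the invariance hypothesis on $V$, this domain contains $V^{-1}(C^1_\Delta(E))=C^1_\Delta(E)$.

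Next, multiplication by $h\in C^1_\Delta(E)$ is a bounded operator on $C_\Delta(E)$. The standard bounded perturbation theorem (e.g.~\cite[III.1.3]{arendt:86} or Engel--Nagel Thm.~III.1.3) then yields that $A=V^{-1}\delta_\tau V+h$ generates a strongly continuous semigroup on $C_\Delta(E)$ with the same domain as $V^{-1}\delta_\tau V$. For positivity, I would write $h=(h+c)-c$ with $c\ge \|h\|_\infty$; then multiplication by $h+c\ge 0$ is a positive bounded operator, and by \cite[B.II.1.13]{arendt:86} the perturbed generator $V^{-1}\delta_\tau V+(h+c)$ produces a positive semigroup, while the scalar shift $-c$ only multiplies the semigroup by $e^{-ct}>0$. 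Applying the identical argument with $-\tau$ in place of $\tau$ (whose group is $(T_{-t})_{t\in\R}$, also positive) shows that $-A$ generates a strongly continuous positive semigroup, so $A$ generates a strongly continuous positive group as required.

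It remains to verify the domain inclusions. Elements of $D$ lie in $C^2_\Delta(E)\subseteq C^1_\Delta(E)$. For $g\in D$, the invariance of $C^1_\Delta(E)$ and $C^2_\Delta(E)$ under $V$ (and hence under $V^{-1}$, as $V$ restricts to a bijection on each) gives $Vg\in C^2_\Delta(E)\subseteq C^1_\Delta(E)\subseteq \mathrm{Dom}(\delta_\tau)$, so $Ag$ is well-defined; thus $D\subseteq \mathrm{Dom}(A)$. Moreover, since $\delta_\tau(Vg)=\tau\cdot(Vg)'$ with $\tau\in C^1_\Delta(E)$ and $(Vg)'\in C^1_\Delta(E)$, the product lies in $C^1_\Delta(E)$. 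Applying $V^{-1}$ keeps us in $C^1_\Delta(E)$ by invariance, and $hg\in C^1_\Delta(E)$ since $h,g\in C^1_\Delta(E)$. Therefore $Ag\in C^1_\Delta(E)$, and then $V(Ag)\in C^1_\Delta(E)\subseteq \mathrm{Dom}(\delta_\tau)$, which shows $Ag\in \mathrm{Dom}(A)$ and hence $A(D)\subseteq \mathrm{Dom}(A)$.

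The only delicate point is making sure that \eqref{eq:fshape} really provides a positive-group generator whose domain contains $C^1_\Delta(E)$ and on which $\delta_\tau$ acts via the first-order expression used above; this is precisely what the admissibility condition together with the Appendix \ref{app:C} characterization is designed to supply, so the argument reduces to bookkeeping once that structural result is accepted.
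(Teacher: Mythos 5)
Your proof is correct, but it takes a genuinely different route from the paper's for the key assertion that $A$ generates a strongly continuous positive group. The paper simply invokes Theorem~\ref{th:posgroupgen} (the characterization from \cite[Theorem B-II.3.28]{arendt:86}), which states as an ``if and only if'' that operators of the exact form $V^{-1}\delta_\tau V + h$ with $V$ a lattice isomorphism, $\tau$ admissible and $h\in C_\Delta(E)$ are precisely the generators of positive groups on $C_\Delta(E)$; the group property is thus a one-line citation. You instead reconstruct the ``if'' direction by hand: you take as input only the base case that $\delta_\tau$ generates a positive group (the $V=\mathrm{id}$, $h=0$ instance of the same characterization), conjugate by the positive isomorphisms $V,V^{-1}$ to get the positive group generated by $V^{-1}\delta_\tau V$, add the bounded multiplication operator $h$ via the bounded perturbation theorem, recover positivity through the shift $h=(h+c)-c$ with $c\geq\|h\|_\infty$ combined with the fact that positive bounded perturbations preserve positivity of the semigroup, and finally run the same argument for $-A$ to upgrade from semigroup to group. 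This buys transparency -- the mechanism by which positivity survives each step is visible -- at the cost of still needing the Arendt machinery for the base case, so the gain in self-containedness is only partial. The domain bookkeeping ($D\subseteq\mathrm{Dom}(A)$ and $A(D)\subseteq\mathrm{Dom}(A)$ via the $C^1_\Delta$/$C^2_\Delta$-invariance of $V$ and the regularity of $\tau$ and $h$) is the same in both proofs; yours spells it out in more detail, including the (reasonable, and in fact needed) reading that $V^{-1}$ also preserves these regularity classes. The implicit claim that $C^1_\Delta(E)$-regularity suffices for membership in $D(\delta_\tau)$ is exactly as (un)justified in your argument as in the paper's, so I do not count it against you.
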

 
\begin{proof} 
First of all note that the domain of $A$ is given by $\{g \in C_{\Delta}(E) \, | \, Vg \in D(\delta_{\tau})\}$, where $D(\delta_{\tau})$ is defined in \eqref{eq:derivationdomain}. As $V$ leaves $C_{\Delta}^1(E)$ and $C^2_{\Delta}(E)$-functions invariant and since $ \tau$ and $h$ are in $C_{\Delta}^1(E)$, the domain of $A$ contains $D$ and $A(D)$. Moreover, by Theorem \ref{th:posgroupgen} the operator $A$ generates a strongly continuous positive group on $C_\Delta(E)$, which yields the result.
\end{proof}

\begin{example}
Let $E =\mathbb{R}$. Set $Vg =pg $ for some strictly positive function $p \in C^{\infty}_{\Delta}(E)$, $ \tau\equiv 1$ and $h \in C^1_{\Delta}(E)$. Then all the conditions of Lemma \ref{lem:Ashape} are satisfied and 
\[
Ag=\frac{1}{p}(p'g+pg')+hg=g'+\widetilde{h} g, \quad g \in D,
\]
where $\widetilde{h}=\frac{p'}{p} +h$. Hence, the stochastic process associated to $A$ is a pure drift process killed at rate $\widetilde{h}$.

To consider some bounded interval, let $E=[0,1]$.
Moreover, let $\tau \in C^1_{\Delta}(E)$ with  such that $\tau(0)=\tau(1)=0$. Then by Remark \ref{rem:admissible} the function
$\tau$ is admissible as it has bounded derivatives. Hence,
\[
Ag=\tau g', \quad g \in D
\]
satisfies the conditions of Lemma \ref{lem:Ashape}.
\end{example}

We can now reformulate Theorem \ref{main1} in the current setting as follows.

\begin{corollary}\label{main1R} 
Let $L:\Dcal\to C(M_+(E))$ be a linear operator of form~\eqref{eq:op}, where $B_0$, $Q_0$ and $Q_1$ satisfy the conditions of  Theorem \ref{main1}. Moreover, for $i=1, \ldots n$, let $A_i$ be of form \eqref{eq:Ashape} such that for $V, \tau$ and $h$ the conditions of Lemma \ref{lem:Ashape} hold. Suppose that
\begin{enumerate}

\item $B_1- \frac{1}{2}\sum_{i=1}^n A_i^2: D \to C_{\Delta}(E)$ satisfies the positive minimum principle on $E^{\Delta}$;

\item $Q_2$ is of the form 
\[
Q_2(g)= C(g)+ \sum_{i=1}^n (A_i \otimes A_i)(g),\quad g \in D \otimes D,
\]
where $C$ admits a ($\beta, \pi$)-representation.
\end{enumerate}
Then conditions \ref{it4iv}-\ref{it4iii} of Theorem \ref{main1} are satisfied.
\end{corollary}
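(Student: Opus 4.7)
The plan is essentially a bookkeeping argument: the five conditions \ref{it4iv}--\ref{it4iii} of Theorem~\ref{main1} split into those directly inherited from the hypotheses of the corollary and a single standing assumption on the operators $A_i$ that must be verified separately. I would begin by observing that Conditions \ref{it4iv}, \ref{it4i}, and \ref{it4ii} concerning $B_0$, $Q_0$, and $Q_1$ hold by hypothesis, since we are explicitly assuming that $B_0$, $Q_0$, and $Q_1$ already satisfy the corresponding conditions of Theorem~\ref{main1}. Similarly Condition~\ref{it4v}, that $B_1-\tfrac12\sum_{i=1}^nA_i^2$ satisfies the positive minimum principle on $E^\Delta$, and Condition~\ref{it4iii}, that $Q_2=C(g)+\sum_{i=1}^n(A_i\otimes A_i)(g)$ with $C$ admitting a $(\beta,\pi)$-representation, are directly built into the assumptions.

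The only nontrivial point is therefore to check the standing hypothesis of Theorem~\ref{main1} that each $A_i$ is the generator of a strongly continuous positive group on $C_\Delta(E)$ whose domain contains both $D$ and $A_i(D)$. Here each $A_i$ is by assumption of the form \eqref{eq:Ashape} with data $(V,\tau,h)$ fulfilling the hypotheses of Lemma~\ref{lem:Ashape}; that lemma delivers exactly the required generator property and domain inclusion. Applying Lemma~\ref{lem:Ashape} to each $i=1,\ldots,n$ thus discharges the last outstanding assumption.

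There is really no substantive obstacle to overcome: the work has already been done in Lemma~\ref{lem:Ashape} (through the characterization of generators of strongly continuous positive groups via admissible functions, lattice isomorphisms, and multiplication operators discussed in Appendix~\ref{app:C}) and in the formulation of the $(\beta,\pi)$-representation. The corollary is essentially a repackaging of Theorem~\ref{main1} in the one-dimensional setting $E\subseteq\R$, with $D=\{f|_E:f\in C_c^\infty(\R)+\R\}$, providing a concrete recipe for producing the admissible $A_i$. The closing line of the proof is then a direct invocation of Theorem~\ref{main1}, yielding the conclusion.
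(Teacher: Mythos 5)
Your proposal is correct and follows exactly the paper's argument: the paper's proof is the single line ``This follows directly from Lemma~\ref{lem:Ashape}'', and your more detailed bookkeeping simply spells out that the only non-assumed hypothesis of Theorem~\ref{main1} is the generator property of each $A_i$, which Lemma~\ref{lem:Ashape} supplies.
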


\begin{proof}
This follows directly from Lemma \ref{lem:Ashape}.
\end{proof}

The rest of the section is devoted to the case $E = \mathbb{R}$. 
In view of Corollary \ref{main1R} and Remark \ref{IIrem7}, $B_1$ should satisfy the positive minum principle. Applying Remark \ref{rem1}, we thus decompose $B_1$ into an operator $B_1g=\mathcal{A}g+mg$ where $\mathcal{A}$ satisfies the positive maximum principle and where $m \in C_{\Delta}(\mathbb{R})$. 
It is well-known, that under this condition $\mathcal{A}$ is a Lévy type operator, i.e.
\begin{align}\label{IIeqn24}
\mathcal{A}g = \gamma g' + \frac{1}{2}\Gamma g'' +\int(g( \fdot + \xi) - g - \chi(\xi)g' ) F ( \fdot , d\xi), \quad 
g \in D,
\end{align}
for some continuous functions $\gamma,\, \Gamma$ with $\Gamma \geq 0$, a truncation function $\chi$ , and a kernel
$F ( \fdot, d\xi)$ from $\mathbb{R}$ to $\mathbb{R}$ such that $\int (|\xi|^2 \wedge 1) F ( \fdot , d\xi) < \infty$ . Every operator of this form satisfies $\mathcal{A}1=0$ and the positive maximum principle on $\mathbb{R}$ . The following result expresses Corollary \ref{main1R} within the current setting.

\begin{corollary}\label{main1Rspe}
Let $L:\Dcal\to C(M_+(E))$ be a linear operator of form~\eqref{eq:op}, where $B_0$, $Q_0$ and $Q_1$ satisfy the conditions of  Theorem \ref{main1}. Let $B_1$ be given by
$B_1g=\mathcal{A}g+mg$ where $m \in C_{\Delta}(\mathbb{R})$ and $\mathcal{A}$ is of form $\eqref{IIeqn24}$  with $\Gamma:=\sigma^2+\tau^2$ for some continuous function $\sigma$ and $\tau \in C^1_{\Delta}{(\mathbb{R})}$. Moreover, define $Q_2$ via
\[
Q_2(g)(x,y)=C(g)(x,y)+ \tau(x)\tau(y)g'(x)g'(y)+ h(x)h(y)g(x)g(y),
\]
where $C$ admits a ($\beta, \pi$)-representation and $h \in C^1_{\Delta}(\mathbb{R})$.
Then conditions \ref{it4iv}-\ref{it4iii} of Theorem \ref{main1}  hold true.
\end{corollary}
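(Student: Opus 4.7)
My plan is to apply Corollary~\ref{main1R} with $n=2$ by constructing two generators $A_1,A_2$ of strongly continuous positive groups on $C_\Delta(\mathbb{R})$ that simultaneously reproduce the two non-$C$ summands of $Q_2$ and, when squared and subtracted from $B_1$, leave behind an operator which manifestly satisfies the positive minimum principle. The natural choice, read off from the form of $Q_2$, is
\[
A_1 g := \tau g', \qquad A_2 g := h g, \qquad g\in D.
\]
Both fit the template \eqref{eq:Ashape} with $V=\mathrm{id}$ (the identity is a trivial lattice isomorphism leaving $C^1_\Delta(\mathbb{R})$ and $C^2_\Delta(\mathbb{R})$ invariant): for $A_1$ take the multiplicative parameter equal to $0$ and appeal to the admissibility of $\tau\in C^1_\Delta(\mathbb{R})$ (as in the example after Lemma~\ref{lem:Ashape}), for $A_2$ take $\tau_{\mathrm{param}}=0$ together with the given $h$. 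Lemma~\ref{lem:Ashape} then supplies, for each $i$, that $A_i$ generates a strongly continuous positive group on $C_\Delta(\mathbb{R})$ whose domain contains both $D$ and $A_i(D)$.

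To verify condition~\ref{it4iii} of Theorem~\ref{main1}, I would compute the action on rank-one symmetric tensors: by the defining rule for $A\otimes A$,
\[
(A_1\!\otimes\! A_1)(g\otimes g)(x,y)=\tau(x)\tau(y)g'(x)g'(y),\qquad
(A_2\!\otimes\! A_2)(g\otimes g)(x,y)=h(x)h(y)g(x)g(y),
\]
which coincides with the two non-$C$ summands in the definition of $Q_2$. Extending linearly to $D\otimes D$ and invoking that $C$ admits a $(\beta,\pi)$-representation by assumption gives exactly $Q_2=C+(A_1\otimes A_1)+(A_2\otimes A_2)$, as required.

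For condition~\ref{it4v}, a direct calculation yields $A_1^2 g = \tau^2 g'' + \tau\tau' g'$ and $A_2^2 g = h^2 g$, so using $\Gamma = \sigma^2 + \tau^2$,
\begin{align*}
\bigl(B_1-\tfrac12(A_1^2+A_2^2)\bigr)g(x) = \bigl(\gamma(x)-\tfrac12\tau(x)\tau'(x)\bigr)g'(x) &+\tfrac12\sigma(x)^2 g''(x) \\
+\int\!\bigl(g(x+\xi)-g(x)-\chi(\xi)g'(x)\bigr)F(x,d\xi) &+ \bigl(m(x)-\tfrac12 h(x)^2\bigr)g(x).
\end{align*}
The first three summands form a Lévy-type operator of the shape~\eqref{IIeqn24} with nonnegative diffusion coefficient $\sigma^2$, and thus satisfy the positive maximum principle on $\mathbb{R}$; since such an operator also annihilates constants, it satisfies the positive minimum principle by Remark~\ref{rem1}. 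Adding the pointwise multiplication $(m-\tfrac12 h^2)g$ preserves the positive minimum principle because at any $x$ with $0\le g$ and $g(x)=\inf g=0$ that last term vanishes. This establishes condition~\ref{it4v}, and Corollary~\ref{main1R} then yields \ref{it4iv}--\ref{it4iii} of Theorem~\ref{main1}.

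The main subtlety is the passage from $\mathbb{R}$ to the one-point compactification $E^\Delta=\mathbb{R}^\Delta$: the positive minimum principle is required on $E^\Delta$, so one must check that the Lévy-type operator extends by continuity to the point $\Delta$ in such a way that the inequality still holds there. This amounts to verifying that $\gamma,\sigma,\tau,\tau',h,m$ and the kernel $F$ decay suitably at infinity, which is encoded by their membership in $C_\Delta$ (respectively $C^1_\Delta$) and is in the same spirit as the admissibility verification used invoking Lemma~\ref{lem:Ashape}; once this is checked at $\Delta$ the argument above applies verbatim.
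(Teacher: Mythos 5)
Your proof is correct, and it takes a slightly different route from the paper's: you apply Corollary~\ref{main1R} with $n=2$ and the two generators $A_1g=\tau g'$, $A_2g=hg$, whereas the paper uses $n=1$ with the single operator $Ag=\tau g'+hg$. Your choice is in fact the one that proves the corollary \emph{as stated}: since $(A\otimes A)(g\otimes g)=(Ag)\otimes(Ag)$, the paper's combined operator produces the extra cross terms $\tau(x)h(y)g'(x)g(y)+h(x)\tau(y)g(x)g'(y)$ in $Q_2$, which do not appear in the displayed formula for $Q_2$ in the statement, while your two rank-one tensors $(A_1\otimes A_1)(g\otimes g)$ and $(A_2\otimes A_2)(g\otimes g)$ reproduce exactly the two non-$C$ summands $\tau(x)\tau(y)g'(x)g'(y)$ and $h(x)h(y)g(x)g(y)$. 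Your computation $A_1^2g=\tau^2g''+\tau\tau'g'$, $A_2^2g=h^2g$ and the resulting form of $B_1-\tfrac12(A_1^2+A_2^2)$ (drift $\gamma-\tfrac12\tau\tau'$, diffusion $\tfrac12\sigma^2\geq0$, unchanged jump kernel, multiplication by $m-\tfrac12h^2$) is also more careful than the paper's displayed expression. The only place you are vaguer than necessary is the final paragraph: the extension of the positive minimum principle to $\Delta$ does not really require a separate decay analysis of the coefficients, because any $0\le g\in D=C_c^\infty(\R)+\R$ with $g(\Delta)=\inf g=0$ is compactly supported, so the local terms vanish at $\Delta$ and the nonlocal term is non-negative there (this is the content of the paper's one-line remark that the relevant $g$ lie in $C_c(\R)$); but this does not affect the validity of your argument.
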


\begin{proof}
We apply here Corollary \ref{main1R} with $i=1$ and the operator
by $Ag=\tau g' + h$. Note that $\tau$ is admissible as it is bounded and we work on $E=\mathbb{R}$. Hence, we only have to verify that 
$
B_1- \frac{1}{2} A^2
$
satisfies the positive minimum principle on $\mathbb{R}^{\Delta}$. This is the case because of the form of $\Gamma=\sigma^2+ \tau^2$ in $\mathcal{A}$ so that we obtain
\[
B_1g- \frac{1}{2} A^2g=(\gamma+ \tau h) g' +\frac{1}{2} \sigma^2 g'' +\int(g( \fdot + \xi) - g - \chi(\xi)g' ) F ( \fdot , d\xi)+ (m+h^2)g , \quad 
g \in D,
\]
which satisfies the positive minimum principle on $\mathbb{R}$ and also $\mathbb{R}^{\Delta}$ since $D \subset C_c(\mathbb{R})$.
\end{proof}

\begin{remark}
Note that in Corollary \ref{main1Rspe} a non-zero $\tau$ in the specification of $Q_2$ is
coupled with a corresponding diffusive component in the specification \eqref{IIeqn24} of $\mathcal{A}$ and in turn of $B_1$. This is analogous to  \cite[Corollary 6.3 and Example 6.4]{CLS:19} (compare also  \cite[Corollary 6.3 and Example 6.4]{CLS:19}).
\end{remark}

\subsection{Measure-valued Pearson diffusions}

Certain measure-valued polynomial diffusions resulting from Theorem \ref{main1} can be considered as generalizations of one-dimensional Pearson diffusions on $\mathbb{R}_+$. We recall here the definition when their state space is some bounded or unbounded interval of $\mathbb{R}$, see, e.g.,  \cite{FS:08}.

\begin{definition} 
A real-valued stochastic process $(X_t)_{t \geq }$ is called \emph{Pearson diffusion} if it  is a weak solution of the following stochastic differential equation 
$$dX_t=b(X_t)dt+\sigma(X_t)dW_t,$$
where $W_t$ is a standard Brownian motion and $x \mapsto b(x)$ and $x \mapsto \sigma^2(x)$ are polynomials respectively of at most first and second order degree, i.e., one has
\begin{align*}
b(x)&=b_0+b_1x,\\
q(x)&=\frac{1}{2}\sigma^2(x)=q_0+q_1x+q_2x^2,
\end{align*}
for some $b_0,b_1,q_0,q_1,q_2\in \mathbb{R}$. In particular the (extended) generator is defined as 
$$Lg(x)=b(x)g'(x)+q(x)g''(x), \; \; \; \; \; \; g \in C^2(\mathbb{R}).$$
\end{definition}

\begin{remark}
Note that this definition coincides with the notion of one-dimensional polynomial processes with continuous trajectories (see \cite{CKT:12, FL:16}).
\end{remark}

Following \cite{A:20} we can recognize six different types of Pearson diffusions. The statements concerning the invariant measure hold true if the drift is of the form
$b(x)=b_0+b_1x=b_1(x+\frac{b_0}{b_1})$ with $b_1 < 0$ and $b_0 \in \mathbb{R}$.

\begin{enumerate}
\item If $q\equiv q_0 >0$, then $X_t$ is a Ornstein-Uhlenbeck (OU) process with values in $\mathbb{R}$ and the stationary distribution is a Gaussian distribution.
\item\label{it10ii} If $q(x)= q_1 x$  with $q_1 >0$, then $X_t$ is a Cox-Ingersoll-Ross (CIR) process with values in $\mathbb{R}_+$ and the stationary is a Gamma distribution.
\item If $q(x)= q_0+q_1 x+q_2x^2$ with $q_2<0$, then $X_t$ is a Jacobi process with values in a compact interval and the stationary distribution is a (scaled) Beta distribution.
\item\label{it10iv} If $q(x)= q_1 x+q_2x^2$ with $q_1, q_2>0$, then $X_t$ is a Fisher-Snedecor process with values in $\mathbb{R}_+$ and the stationary distribution is a Fisher-Snedecor distribution.
\item\label{it10v} If $q(x)=q_2x^2$ with $q_2>0$, then $X_t$ is a reciprocal Gamma (RG) process with values in $\mathbb{R}_+$ and the stationary distribution is a reciprocal Gamma distribution. Note that this corresponds exactly to diffusion characteristic of the Black \& Scholes model.
\item If $q(x)=q_0+q_1 x+q_2x^2$ with $q_2>0$ and the discriminant $\Delta_q<0$, then $X_t$ is a Student process with values in $\mathbb{R}$ and the stationary distribution is a Student distribution.
\end{enumerate}

Since we are here only interested in (non-negative) measure-valued analogs of the cases \ref{it10ii}, \ref{it10iv} and \ref{it10v} and not in deriving existence of stationary measures, we only distinguish the following  three cases in terms of the characterization given in Theorem~\ref{mainthm}.

\begin{enumerate}
\item[\ref{it10ii}:] As already discussed in Section \ref {sec:affine} the case $Q_2\equiv0$ can be seen as a generalization of Cox Ingersoll Ross processes to the measure-valued case.
\item[\ref{it10iv}:] Taking the general form of Theorem~\ref{mainthm} yields a measure-valued analog of the Fisher-Snedecor process.
\item[\ref{it10v}:] A measure-valued analog of a multivariate Black-Scholes type model corresponds to the case $Q_1= 0$ and $\pi\equiv 0$.
\end{enumerate}

\appendix 

\section{Necessary conditions for the positive maximum principle}\label{app:A}

We here establish the necessary conditions implied by the positive maximum principle for both cases $M_+(E^{\Delta})$ and $M_+(E)$. Recall that $D\subseteq C_\Delta(E)$ is a dense linear subspace containing the constant function~$1$ and let $\Dcal$ be as in \eqref{eq:domain}.

\begin{lemma}\label{lem:necessary}
Let $L:\Dcal\to C(M_+(E))$ be an $M_+(E)$-polynomial operator of form \eqref{eq:op}
satisfying the positive maximum principle on $M_+(E^{\Delta})$. 
Then
\begin{enumerate}
\item\label{it9i}
$B_0$ is a positive linear functional on $D$, hence
there exists a Radon measure  $ b \in M_+(E^{\Delta})$
such that  $B_0(g)= \langle g, b \rangle$;

\item\label{it9ii}  $B_1$ satisfies the positive minimum principle on $E^{\Delta}$;
\item\label{it9iii} 
$Q_0(g\otimes g)=0$ for all $0 \leq g \in D$;

\item\label{it9iv}   
$Q_1(g\otimes g)$ and $Q_2(g\otimes g)$ are non-negative functions for all $g \in D$;

\item\label{it9v}  $ \langle Q_1(g\otimes g), \nu \rangle = 0$ and $\langle Q_2(g\otimes g), \nu^2 \rangle = 0$ for all $\nu \in M_+(E^{\Delta})$ and $0 \leq g \in D$ which are $0$ on the support of $\nu$.
\end{enumerate}
Replacing the positive maximum principle on $M_+(E^\Delta)$ by the positive maximum principle on $M_+(E)$ we obtain that conditions \ref{it9i},\ref{it9iii}, and \ref{it9iv} still hold true and we also get that
\begin{enumerate}[resume]
\item\label{it9iinew} $B_1$ satisfies the positive minimum principle on $E$;
\item\label{it9vnew}  $ \langle Q_1(g\otimes g), \nu \rangle = 0$ and $\langle Q_2(g\otimes g), \nu^2 \rangle = 0$ for all $\nu \in M_+(E)$ and $0 \leq g \in D$ which are $0$ on the support of $\nu$. 
\end{enumerate}

\end{lemma}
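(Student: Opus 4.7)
The plan is to apply the positive maximum principle (PMP) to specific elements $f \in \mathcal{D} = \mathrm{span}(P^D \cup F^D)$ whose maxima on $M_+(E^\Delta)$ (or $M_+(E)$) are attained at a controlled $\nu^*$, and then to read off the required constraints on $B_0, B_1, Q_0, Q_1, Q_2$ from the decomposition of $Lf(\nu^*)$ supplied by \eqref{eq:op}. The candidate maximizers will be $\nu^* = 0$, point masses $\nu^* = c\delta_{x^*}$, or finite sums thereof; in the $M_+(E^\Delta)$-case $x^*$ ranges over $E^\Delta$, while in the $M_+(E)$-case it is restricted to $E$, which accounts precisely for the weaker $M_+(E)$-conclusions.

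First I would prove (i) by taking the linear polynomial $f(\nu) = -\langle g, \nu\rangle \in P^D$ with $g \in D$, $g \geq 0$; it is non-positive on $M_+(E^\Delta)$ with $f(0) = 0$, so PMP at $\nu^* = 0$ gives $-B_0(g) \leq 0$. Since $1 \in D$ and $\pm g \leq \|g\|\cdot 1$ lie in $D$, positivity of $B_0$ yields $|B_0(g)| \leq \|g\|\, B_0(1)$, so $B_0$ extends by density to a bounded positive linear functional on $C_\Delta(E) \cong C(E^\Delta)$ represented by a Radon measure $b \in M_+(E^\Delta)$. For (ii) I keep the same $f$ under the additional assumption $g(x^*) = 0$: the measure $\nu^* = c\delta_{x^*}$ is still a maximizer, and the PMP inequality $-B_0(g) - c\, B_1g(x^*) \leq 0$, divided by $c$ and sent to infinity, yields $B_1g(x^*) \geq 0$. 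Restricting $x^* \in E$ gives the $M_+(E)$-version on $E$ instead of $E^\Delta$.

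For (iii), the lower bound $Q_0(g\otimes g)\geq 0$ comes from $f(\nu) = -\langle g, \nu\rangle^2$ at $\nu^* = 0$. For the matching upper bound I would take $f(\nu) = \phi(\langle g, \nu\rangle) \in F^D$ with $\phi(y) = \psi(y)(-\beta y + \alpha y^2)$, where $\psi \in C_c^\infty(\mathbb{R})$ is a bump satisfying $\psi(0) = 1$, $\psi'(0) = 0$ and $\mathrm{supp}\,\psi \subseteq [-1, \beta/(2\alpha)]$; this produces $\phi \in C_c^\infty(\mathbb{R})$ with $\phi \leq 0$ on $[0,\infty)$ and $\phi(0) = 0$, $\phi'(0) = -\beta$, $\phi''(0) = 2\alpha$, so $\nu^* = 0$ remains a global maximizer on $M_+(E^\Delta)$. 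PMP then reads $\alpha Q_0(g\otimes g) \leq \beta B_0(g)$, and letting $\alpha \to \infty$ forces $Q_0(g\otimes g) \leq 0$; bilinearity together with the shift $g = (g + c\cdot 1) - c\cdot 1$ for $c \geq \|g\|$ extends $Q_0 \equiv 0$ to all of $D \otimes D$. With $Q_0 \equiv 0$ in hand, (iv) is obtained by applying PMP to $f(\nu) = -\phi\bigl((\langle g, \nu\rangle - c)^2\bigr) \in F^D$ with $\phi \in C_c^\infty(\mathbb{R})$, $\phi \geq 0$, $\phi(0) = 0$, $\phi'(0) > 0$; this is maximized at $\nu^* = (c/g(x^*))\delta_{x^*}$ whenever $g(x^*)$ and $c$ share a sign. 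Since $\partial f(\nu^*) = 0$ and $\partial^2 f(\nu^*) = -2\phi'(0)\,g\otimes g$, the PMP inequality reduces to
\[
t\, Q_1(g\otimes g)(x^*) + t^2\, Q_2(g\otimes g)(x^*, x^*) \geq 0, \qquad t := c/g(x^*) > 0,
\]
and the limits $t \downarrow 0$ and $t \uparrow \infty$ yield non-negativity of $Q_1(g\otimes g)(x^*)$ and of $Q_2(g\otimes g)(x^*, x^*)$; the case $g(x^*) = 0$ is handled by $f = -\langle g, \nu\rangle^2$ at $\nu^* = c\delta_{x^*}$, and off-diagonal positivity of $Q_2$ by the same construction with $\nu^* = c_1\delta_{x_1} + c_2\delta_{x_2}$.

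Finally, (v) combines the lower bound from $f = -\langle g, \nu\rangle^2$ with the upper bound from the $\phi(\langle g, \nu\rangle)$-test function of (iii) now evaluated at $\nu^* = \nu$ supported in $\{g = 0\}$: the $B_0, B_1$-terms remain bounded while the $\alpha$-coefficient of $Lf(\nu^*)$ must tend to zero, giving $\langle Q_1(g\otimes g),\nu\rangle + \langle Q_2(g\otimes g),\nu^2\rangle = 0$; specializing $\nu$ to one- and two-point masses in $\{g = 0\}$ and matching coefficients of the resulting polynomial identity in the masses separates the two terms. The hard part will be the explicit construction of the $C_c^\infty$ bumps with prescribed derivative data and supports shrinking with the parameters, and verifying that the resulting $f \in \mathcal{D}$ realizes its \emph{global} maximum on $M_+(E^\Delta)$ at the chosen $\nu^*$; once the test functions are in place, all other steps are short calculations. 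The $M_+(E)$-versions follow by restricting $x^* \in E$ throughout: $\nu^* = 0$ still lies in $M_+(E)$ so (i), (iii), and (iv) persist verbatim, while the conclusions on $B_1$ and on the vanishing of $\langle Q_1, \nu\rangle + \langle Q_2, \nu^2\rangle$ are limited to $E$.
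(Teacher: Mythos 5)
Your proposal is correct and follows essentially the same route as the paper's proof: the positive maximum principle is applied at $\nu^*=0$, at point masses $c\delta_{x^*}$, and at measures supported in $\{g=0\}$, with test functions whose first and second derivatives at $\nu^*$ isolate $B_0$, $B_1$, $Q_0$, $Q_1$ and $Q_2$ in turn; the paper's version of your bump $\psi(y)(-\beta y+\alpha y^2)$ is the function $(y-1)^{2n}$ cut off near $y=\tfrac12$, and its version of your parameter $t=c/g(x^*)$ is the rescaling $\mu\mapsto\nu/n$ inside the quadratic test polynomial $-(\langle g,\nu\rangle-\langle ng,\mu\rangle)^2$. One small repair is needed in step (iv): a function $\phi\in C_c^\infty(\mathbb{R})$ with $\phi\geq0$ everywhere and $\phi(0)=0$ necessarily has $\phi'(0)=0$, so you should require $\phi\geq0$ only on $[0,\infty)$ (which suffices, since the argument $(\langle g,\nu\rangle-c)^2$ is non-negative), or simply use the polynomial $-(\langle g,\mu\rangle-c)^2\in P^D$ directly, as the paper in effect does. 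Note also that your two-point construction for $Q_2$ yields copositivity of $(x,y)\mapsto Q_2(g\otimes g)(x,y)$ rather than pointwise off-diagonal non-negativity, but this is exactly what the paper's own argument delivers and what is used downstream in Lemma~\ref{lem:copositivity}.
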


\begin{proof}
Consider the following function $f(\nu)=-\langle g, \nu \rangle $ with $g \in D$ such that $g \geq 0$. 
Then $\sup_{M_+(E^{\Delta})} f=f(0) = 0$. By the positive maximum principle for $L$, it follows that
\[
L f(0)= -B_0(g)-\langle B_1 g, 0 \rangle \leq 0. 
\]
Hence, $B_0(g) \geq 0$ and since $g$ was arbitrary we conclude that $B_0$ is a positive linear functional on $D$. 
By assumption $D$ is a linear subspace of $C_\Delta(E)$  that contains an interior point of the cone of non-negative function, namely the constant function $1$. In this case  \cite[Theorem and Corollary 2 of Chapter V.5.4]{sch:99} (alternatively \cite[Section 2.3.1]{B:03}) yields that every positive linear map on $D$ can be extended to a positive linear map on $C_\Delta(E)$. 

Let now $g$ be as above with the additional property that $g(x)=0$ for some $x \in E^{\Delta}$, i.e., $\inf_{E^{\Delta}} g =g(x) = 0$ and $\sup_{M_+(E^{\Delta})} f = f(n \delta_x)= 0$ for every $n \in \mathbb{N}$. By the positive maximum principle for $L$,  we have
\[
L f(n \delta_x)= -B_0(g)-\langle B_1 g, n\delta_x \rangle \leq 0. 
\]
Hence 
\[
-\langle B_1 g, \delta_x \rangle  \leq \frac{B_0(g)}{n}
\]
and sending $n \to \infty$, we conclude that $B_1g (x)\geq 0$, which implies the positive minimum principle on $E^{\Delta}$.\\

Consider now the function $f(\nu)=- (\langle g, \nu\rangle)^2$ for $g \in D$. As it attains its maximum  at $0$ with $\partial f(0)=0$ and $\partial^2 f(0)= -2 g \otimes g$, we have again by the positive maximum principle
\[
L f(0)=-2 Q_0( g \otimes g)- 2\langle  Q_1( g \otimes g),0 \rangle -2\langle  Q_2( g \otimes g),0^2 \rangle \leq 0,
\]
implying that $Q_0( g \otimes g) \geq 0$.

On the other hand, take a function $\phi \in C_c^{\infty}(\mathbb{R})$ restricted to $\mathbb{R}_+$ of the form $\phi(x)=(x-1)^{2n} $ for $x \in [0,\frac{1}{2}]$ and $n \in \mathbb{N}$ such that it attains is maximum at $0$. Consider $f(\nu)=\phi(\langle g, \nu \rangle)$ for $g  \in D$ with $g \geq 0$. Then $\sup_{M_+(E^{\Delta})}f=f(0)=1$, $\partial f(0)=-2n g$ and $\partial^2f(0)=2n(2n-1) (g\otimes g)$. By the positive maximum principle for $L$, it thus follows that
\[
Lf(0)=-2n B_0(g)+ 2n (2n-1)Q_0(g \otimes g) \leq 0.
\]
Hence,
\[
Q_0(g \otimes g)\leq \frac{B_0(g)}{2n-1}
\]
and sending $n \to \infty$ yields $Q_0(g \otimes g) \leq 0$ for $g \geq 0$. Hence for all $g \geq 0$, $Q_0(g \otimes g)=0$. 

Finally, fix $g \in D$ and $\nu \in M_+(E^{\Delta})$ and consider the function $f(\mu)=- (\langle g, \nu \rangle-\langle n g, \mu \rangle)^2$ for $n \in \mathbb{N}$. Then $f\leq 0$, $f(\frac{\nu}{n})=0$, $\partial f(\frac{\nu}{n})=0$ and $\partial^2 f(\frac{\nu}{n})=-2 n^2 g \otimes g$. Hence, 
\[
Lf\left(\frac{\nu}{n}\right)=-2 n^2\langle Q_1(g \otimes g), \frac{\nu}{n} \rangle -2 n^2\langle Q_2(g \otimes g), \frac{\nu^2}{n^2} \rangle \leq 0.
\] 
and thus 
\[
\langle Q_1 (g \otimes g), \nu \rangle \geq -\frac{\langle Q_2(g \otimes g), \nu^2\rangle}{n},
\]
implying that $\langle Q_1(g \otimes g), \nu \rangle \geq 0$ as $n$ tends to $\infty$. Considering the function $f(\mu)=- (\langle g, \nu \rangle-\langle  \frac{g}{n}, \mu \rangle)^2$, yields by similar arguments that $\langle Q_2(g \otimes g), \nu^2 \rangle \geq 0$.

On the other hand, consider again a function $\phi \in C_c^{\infty}(\mathbb{R})$ restricted to $\mathbb{R}_+$ of the form $\phi(x)=(x-1)^{2n} $ for $x \in [0,\frac{1}{2}]$ and $n \in \mathbb{N}$ such that it attains is maximum at $0$. Fix $0 \leq g \in D $ and  $\nu \in M_+(E^{\Delta})$ such that $g=0$ on the support of $\nu$.  
Consider $f(\mu)=\phi(\langle g, \mu \rangle)$. Then $\sup_{M_+(E^{\Delta})}f=f(\nu)=1$, $\partial f(\nu)=-2n g$ and $\partial^2f(\nu)=2n(2n-1) (g\otimes g)$. By the positive maximum principle for $L$, it thus follows that
\[
Lf(\nu)=-2n B_0(g)-2n\langle  B_1(g), \nu \rangle+ 2n (2n-1)\langle Q_1(g \otimes g),\nu\rangle + 2n (2n-1)\langle Q_2(g \otimes g),\nu^2\rangle \leq 0.
\]
Hence,
\[
\langle Q_1(g \otimes g),\nu\rangle +\langle Q_2(g \otimes g),\nu^2\rangle\leq \frac{B_0(g)+ \langle B_1(g),\nu \rangle}{2n-1}
\]
which implies that 
\[
\langle Q_1(g \otimes g),\nu\rangle +\langle Q_2(g \otimes g),\nu^2\rangle=0.
\]
As both summands are non-negative we obtain 
\[
\langle Q_1(g \otimes g),\nu\rangle=0 \quad \text{and} \quad \langle Q_2(g \otimes g),\nu^2\rangle=0
\]
for all $g \geq 0$ which are $0$ on the support of $\nu$.

The second part of the lemma can be proved analogously.
\end{proof}

\begin{lemma} \label{lem3}
Suppose that $D=C_\Delta(E)$ and suppose that $L$ satisfies the assumptions of Lemma \ref{lem:necessary} with the positive maximum principle on $M_+(E)$ instead of $M_+(E^{\Delta})$. Then  conditions \ref{it8iv}-\ref{it8ii} of Theorem~\ref{mainthm} are satisfied.
\end{lemma}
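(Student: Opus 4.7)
The plan is to verify conditions \ref{it8iv}--\ref{it8ii} of Theorem~\ref{mainthm} one at a time, combining the necessary conditions from the second half of Lemma~\ref{lem:necessary} (so that \ref{it9i}, \ref{it9iii}, \ref{it9iv}, \ref{it9iinew}, \ref{it9vnew} are at our disposal) with the hypothesis $D=C_\Delta(E)\cong C(E^\Delta)$. The two structural tools we will rely on throughout are Urysohn's lemma on the compact Polish space $E^\Delta$ (which supplies $e_x,e_y\in C_\Delta(E)$ with prescribed values at finitely many points), and the Riesz--Markov representation of positive linear functionals on $C(E^\Delta)$ by Radon measures. Condition~\ref{it8iv} is then immediate from~\ref{it9i}, and condition~\ref{it8i} follows from~\ref{it9iii} by polarization: any $g\in D$ splits as $g=g^+-g^-$ with $g^\pm\in C_\Delta(E)$ non-negative, and since $Q_0(g\otimes g)=0$ on the non-negative cone, bilinearity of $(g,h)\mapsto Q_0(g\otimes h)$ forces $Q_0\equiv 0$ on $D\otimes D$.

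For condition~\ref{it8v}, Remark~\ref{rem1} splits $B_1=\mathcal{A}+m$ with $m:=B_1\mathbf{1}\in C_\Delta(E)$, $\mathcal{A}\mathbf{1}=0$, and $\mathcal{A}$ satisfying the positive maximum principle on $E$. For each $x\in E$ the linear map $\tilde\phi_x\colon\{g\in C_\Delta(E):g(x)=0\}\to\R$, $\tilde\phi_x(g):=\mathcal{A}g(x)$, is positive on the non-negative cone and therefore automatically bounded: if $g_n\geq 0$, $\|g_n\|\leq 1$, and $\tilde\phi_x(g_n)\geq 4^n$ were to exist, the element $g:=\sum_n g_n/2^n\in C_\Delta(E)$ would satisfy $\tilde\phi_x(g)\geq\tilde\phi_x(g_n)/2^n\to\infty$, contradicting $\tilde\phi_x(g)\in\R$. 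Hahn--Banach extension and Riesz--Markov then produce $\nu_B(x,\cdot)\in M_+(E^\Delta)$ such that $\mathcal{A}g(x)=\int(g(\xi)-g(x))\,\nu_B(x,d\xi)$; the value at the adjoined point $\Delta$ (in the non-compact case) is recovered by passing to the weak limit of $\nu_B(x_n,\cdot)$ along a sequence $x_n\to\Delta$, which uses the uniform mass bound discussed below. Borel measurability of $x\mapsto\nu_B(x,B)$ follows from the continuity of $x\mapsto\int g\,d\nu_B(x,\cdot)=\mathcal{A}g(x)+g(x)\,\nu_B(x,E^\Delta)$ for $g\in C_\Delta(E)$ together with a monotone class argument.

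Conditions~\ref{it8iii} and~\ref{it8ii} are obtained from the same Cauchy--Schwarz scheme applied one- and two-point-wise. For~\ref{it8iii} the form $T_x(g,h):=Q_1(g\otimes h)(x)$ is positive semidefinite by~\ref{it9iv}, and~\ref{it9vnew} tested against $\nu=\delta_x$ together with pointwise non-negativity forces $T_x(g,g)=0$ for $0\leq g\in D$ with $g(x)=0$; splitting $g=g^+-g^-$ extends this to arbitrary $g\in D$ vanishing at $x$, and Cauchy--Schwarz yields $T_x(g,h)=0$ as soon as $g(x)=0$. Decomposing $g=g(x)\mathbf{1}+(g-g(x)\mathbf{1})$ gives $T_x(g,h)=\alpha(x)g(x)h(x)$ with $\alpha:=Q_1(\mathbf{1}\otimes\mathbf{1})\in C_\Delta(E)$ non-negative. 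For~\ref{it8ii} the analogous argument applied to $R_{x,y}(g,h):=Q_2(g\otimes h)(x,y)$, with~\ref{it9vnew} tested against $\nu=c_1\delta_x+c_2\delta_y$ (each term of the sum $\sum c_ic_jQ_2(g\otimes g)(x_i,x_j)$ being non-negative by~\ref{it9iv}), shows that $R_{x,y}(g,h)=0$ whenever $g$ vanishes at both $x$ and $y$. Choosing $e_x,e_y\in C_\Delta(E)$ by Urysohn with $e_x(x)=e_y(y)=1$ and $e_x(y)=e_y(x)=0$ (the diagonal handled with a single function), we read off $\pi(x,y):=2Q_2(e_x\otimes e_x)(x,y)$ and $\beta(x,y):=Q_2(e_x\otimes e_y)(x,y)$, with independence from the choice of $e_x,e_y$ provided by the annihilation property. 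The constraints $\pi\geq 0$, $\pi(x,x)=0$, $\beta(x,x)\geq 0$ are built in, and~\eqref{eq:matrix} follows from $\langle Q_2(g\otimes g),\mu^2\rangle\geq 0$ for $\mu=\sum c_i\delta_{x_i}$ by a direct computation, since $g(x_i)$ can be prescribed arbitrarily via Urysohn.

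The remaining regularity in~\ref{it8ii} is handled as follows. The identity $\pi+\overline\pi+2\beta=2Q_2(\mathbf{1}\otimes\mathbf{1})\in\widehat C_\Delta(E^2)$ is immediate from the $(\beta,\pi)$-representation evaluated at $\mathbf{1}\otimes\mathbf{1}$. Continuity of $\pi$ and $\beta$ on $(E^\Delta)^2\setminus\{x=y\}$ is local: near any $(x_0,y_0)$ with $x_0\neq y_0$, Urysohn produces $\phi,\phi_1,\phi_2\in C_\Delta(E)$ tailored so that on a small neighborhood of $(x_0,y_0)$ one has $Q_2(\phi\otimes\phi)(x,y)=\tfrac12\pi(y,x)$ and $Q_2(\phi_1\otimes\phi_2)(x,y)=\tfrac12\beta(x,y)$, whence $\pi$ and $\beta$ inherit the continuity of these $\widehat C_\Delta(E^2)$-functions. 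The main obstacle I expect is the uniform boundedness of $\pi$ and $\beta$, and in parallel the uniform mass bound $\sup_{x\in E^\Delta}\nu_B(x,E^\Delta)<\infty$ used in the construction of~\ref{it8v}: the plan is to observe that each $R_{x,y}$ is a continuous bilinear form on the Banach space $D$ (since by the decomposition it depends only on the point evaluations $g(x),g(y),h(x),h(y)$), and that the family $\{R_{x,y}\}_{(x,y)\in(E^\Delta)^2}$ is pointwise bounded because each $Q_2(G)\in\widehat C_\Delta(E^2)$ is continuous on the compact $(E^\Delta)^2$; two applications of the Banach--Steinhaus uniform boundedness principle then yield $\sup_{(x,y)}\|R_{x,y}\|<\infty$, which translates into uniform bounds on $\pi$ and $\beta$, and an analogous argument applied to the linear family $\{\tilde\phi_x\}_{x\in E}$ delivers the uniform mass bound for $\nu_B$.
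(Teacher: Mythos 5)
Your overall route is the same as the paper's: harvest the necessary conditions \ref{it9i}, \ref{it9iii}, \ref{it9iv}, \ref{it9iinew}, \ref{it9vnew} from Lemma~\ref{lem:necessary}, then use $D=C_\Delta(E)$ to upgrade them to the representations \ref{it8iv}--\ref{it8ii}. The difference is one of packaging: the paper's proof of Lemma~\ref{lem3} is essentially a reduction to three auxiliary results --- Lemma~\ref{lem:boundedness} (which itself cites Lemma~C.2 of \cite{CLS:19}) for \ref{it8v}, and Lemmas~\ref{lem:Q1} and \ref{lem:copositivity} for \ref{it8iii} and \ref{it8ii} --- whereas you re-derive their content in-line. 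Your devices for the quantitative parts are also slightly different and legitimate: automatic boundedness of the positive functional via the $\sum_n g_n/2^n$ series trick, and uniform bounds on $\nu_B(x,E^\Delta)$, $\pi$ and $\beta$ via Banach--Steinhaus applied to the pointwise-bounded families $\{g\mapsto \mathcal{A}g(x)\}_x$ and $\{R_{x,y}\}_{x,y}$; the paper instead extracts the same bounds by explicit contradiction sequences (e.g.\ choosing $g(x_n)^4=1/\pi(x_n,y_n)$). Both work; Banach--Steinhaus is arguably cleaner once one has reduced each form to a finite-dimensional quotient. For \ref{it8v} you can in fact skip Hahn--Banach entirely: $\{g\in C(E^\Delta):g(x)=0\}$ is $C_0(E^\Delta\setminus\{x\})$, so the locally compact Riesz--Markov theorem applies directly.

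There is one step where your stated justification does not literally apply. You describe conditions \ref{it8iii} and \ref{it8ii} as "the same Cauchy--Schwarz scheme applied one- and two-point-wise". At a diagonal point the form $(g,h)\mapsto Q_2(g\otimes h)(x,x)$ is genuinely positive semidefinite (test $\langle Q_2(g\otimes g),\nu^2\rangle\geq 0$ against $\nu=\delta_x$, valid for \emph{all} $g\in D$), so Cauchy--Schwarz is available there, as it is for $Q_1$. But at an off-diagonal point $(x,y)$ the form $R_{x,y}(g,h)=Q_2(g\otimes h)(x,y)$ is only \emph{copositive}, not positive semidefinite: what Lemma~\ref{lem:necessary} actually delivers is $\langle Q_2(g\otimes g),\nu^2\rangle\geq 0$ for $\nu\in M_+(E)$, and a copositive kernel can be negative off the diagonal (already for $2\times 2$ matrices). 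So your parenthetical "each term of the sum being non-negative by \ref{it9iv}" is not justified for the cross term, and, more importantly, Cauchy--Schwarz cannot be invoked to pass from $R_{x,y}(g,g)=0$ for $g$ vanishing at $x,y$ to $R_{x,y}(g,h)=0$ for such $g$ and arbitrary $h$ --- which is exactly the step needed to conclude that $R_{x,y}$ factors through the point evaluations at $x$ and $y$.

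The first half of the gap closes for free: when $g\geq 0$ vanishes at $x$ and $y$, condition \ref{it9vnew} applied separately to $\nu=\delta_x$ and $\nu=\delta_y$ kills the two diagonal terms, so the identity $\langle Q_2(g\otimes g),(\delta_x+\delta_y)^2\rangle=0$ forces the cross term $Q_2(g\otimes g)(x,y)$ to vanish without any sign information on it. The second half needs a replacement for Cauchy--Schwarz: given $r$ with $r(x)=r(y)=0$ and arbitrary $g_1$, consider $g_t=g_1+tr$ and note that the diagonal values $Q_2(g_t\otimes g_t)(x,x)$ and $Q_2(g_t\otimes g_t)(y,y)$ are independent of $t$ (by the already-established diagonal result), while $Q_2(g_t\otimes g_t)(x,y)=Q_2(g_1\otimes g_1)(x,y)+2tQ_2(g_1\otimes r)(x,y)$; copositivity of the $2\times2$ configuration for all $t\in\mathbb{R}$ then forces $Q_2(g_1\otimes r)(x,y)=0$. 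This is the argument the paper sketches in the proof of Lemma~\ref{lem:copositivity} ("we can always choose $g$ such that the above non-negativity is not satisfied"). With that substitution your proof goes through; the rest of the proposal is correct.
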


\begin{proof}
We first apply Lemma~\ref{lem:necessary} to obtain conditions \ref{it9i}, \ref{it9iii}, \ref{it9iv}, \ref{it9iinew}, and \ref{it9vnew} stated there.
Since its domain is given by $C_{\Delta}(E)$, the operator $B_0$ is automatically continuous (see \cite[Theorem V.5.5]{sch:99}) and the Riesz-Markov-Kakutani theorem (e.g.~\cite[Theorem~2.14]{R:87}) yields the representation via a Radon measure $b \in M_+(E^{\Delta})$ given in condition~\ref{it8iv} of Theorem~\ref{mainthm}.

Condition~\ref{it8v} of Theorem~\ref{mainthm} follows by Lemma~\ref{lem:boundedness}. Next, in order to establish the form of $Q_0$ observe that
by linearity condition \ref{it9iii} of Lemma~\ref{lem:necessary} extends to the cone
\[
K_+:=\left\{\sum_{i=1}^k \lambda_i (g_i \otimes g_i)\, | \, k \in \mathbb{N},  \lambda_i \in \mathbb{R}_+, 0 \leq g_i   \in D\right\}.
\]
Since $g=g^+-g^-$ where $0 \leq g^{\pm} \in C_{\Delta}(E)$, 
we have
\[
g \otimes g= \underbrace{2 g^+ \otimes g^+}_{\in K_+} +  \underbrace{2 g^- \otimes g^-}_{\in K_+} - \underbrace{(g^+ + g^-) \otimes (g^+ + g^-)}_{\in K_+}.
\]

This implies that $D\otimes D=K_+-K_+$ and $Q_0$ is therefore a continuous linear functional and thus $0$ on the whole of $D \otimes D$. This yields condition \ref{it8i} of Theorem~\ref{mainthm}.

Next, fix $\nu \in M_+(E)$ and consider
$
C^{\nu}:=\{ g \in C_{\Delta}(E) \, | \, g\equiv 0 \text{ on} \supp(\nu)\}
$.
Then $C^{\nu}$ is a linear subspace of $C_{\Delta}(E)$, which can be represented by $C^{\nu}=C^{\nu}_+ - C^{\nu}_+$, 
where
\[
C^{\nu}_+:=\{ 0\leq g \in C_{\Delta}(E) \, | \, g\equiv 0 \text{ on} \supp(\nu) \}.
\]
Similarly for 
\[
K^{\nu}:=\left \{\sum_{i=1}^k \lambda_i (g_i \otimes g_i) \, | \, k \in \mathbb{N}, \lambda_i \in \mathbb{R}, g_i \in C^{\nu}\right\}
\]
we have $K^{\nu}=K^{\nu}_+ - K^{\nu}_+$, where
\[
K_+^{\nu}:= \left\{\sum_{i=1}^k \lambda_i (g_i \otimes g_i) \, | \, k \in \mathbb{N}, \lambda_i \in \mathbb{R}_+, g_i \in C^{\nu}_+\right\}
\]
Therefore the non-negative linear functionals $g  \mapsto \langle Q_1(g), \nu \rangle$ and $g \mapsto \langle Q_2(g), \nu^2 \rangle$ are continuous and thus $0$ on $K^{\nu}$, proving that 
$$ \langle Q_1(g\otimes g), \nu \rangle = 0\qquad \text{ and }\qquad \langle Q_2(g\otimes g), \nu^2 \rangle = 0,$$
 for all $\nu \in M_+(E)$ and $g \in D$ which are $0$ on the support of $\nu$.
Lemma~\ref{lem:Q1} and Lemma~\ref{lem:copositivity} then yield condition~\ref{it8iii} and  condition~\ref{it8ii} of Theorem~\ref{mainthm}, respectively.
\end{proof}

\begin{lemma}\label{lem:boundedness}
Let $B: C_{\Delta}(E) \to C_{\Delta}(E)$ be a linear operator. Then $B$ satisfies the positive minimum principle on $E$ if and only if 
there is map $m\in C_\Delta(E)$ and a non-negative, finite kernel $\nu_B$ from $E$ to $E^\Delta$ such that
\eqref{eqn2} holds.
 In this case, $B$ is bounded and satisfies the positive minimum principle on $E^\Delta$. Moreover, there is some non-negative (finite) measure $\nu_B(\Delta,\fdot)$ such that \eqref{eqn2} holds also for $x=\Delta$.
\end{lemma}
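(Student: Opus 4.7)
The backward direction is immediate: if $Bg(x)=\int(g(\xi)-g(x))\nu_B(x,d\xi)+m(x)g(x)$ and $g\ge 0$ with $g(x)=0=\inf_E g$, then the integrand is $\ge 0$ and $m(x)g(x)=0$, whence $Bg(x)\ge 0$. For the forward direction, my plan has four steps: reduce to the case $B1=0$; prove boundedness of $B$ by a diagonal-series argument; construct the measures $\nu_B(x,\cdot)$ via Riesz--Markov--Kakutani; and extend everything to $x=\Delta$. For the reduction, set $m:=B1\in C_\Delta(E)$ and $\tilde Bg:=Bg-mg$, so $\tilde B 1=0$ and $\tilde B$ inherits the positive minimum principle on $E$ from $B$. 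Replacing $g$ by $\sup g-g$ also shows $\tilde B$ is dispersive on $E$: $g(x)=\sup g$ implies $\tilde Bg(x)\le 0$. Writing $C_x:=\{h\in C_\Delta(E):h(x)=0\}$, the key property to exploit is that $\tilde Bh(x)\ge 0$ for every non-negative $h\in C_x$.

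The main obstacle is establishing boundedness of $B$ without any a priori continuity assumption. For each $x\in E$ I introduce
\[
\rho(x):=\sup\{\tilde Bh(x):h\in C_x,\ 0\le h\le 1\},
\]
and claim $\rho(x)<\infty$. If this failed, one could pick $h_n\in C_x$ with $0\le h_n\le 1$ and $\tilde Bh_n(x)\ge 4^n$, and set $H:=\sum_{n\ge 1}2^{-n}h_n\in C_\Delta(E)$; then $H\in C_x$ with $0\le H\le 1$. For every $N$, the tail $H-\sum_{n\le N}2^{-n}h_n=\sum_{n>N}2^{-n}h_n$ is a non-negative element of $C_x$, so the positive minimum principle for $\tilde B$ gives $\sum_{n\le N}2^{-n}\tilde Bh_n(x)\le \tilde BH(x)<\infty$, contradicting $\sum_{n\le N}2^n\to\infty$. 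For arbitrary $g\in C_\Delta(E)$, $\tilde Bg(x)=\tilde B(g-g(x))(x)$ and the decomposition $g-g(x)=(g-g(x))^+-(g-g(x))^-$ inside $C_x$ yields $|\tilde Bg(x)|\le\tilde B|g-g(x)|(x)\le 2\rho(x)\|g\|$. Since $\tilde Bg\in C_\Delta(E)$ is bounded for each fixed $g$, the uniform boundedness principle applied to the functionals $\phi_x(g):=\tilde Bg(x)$ gives $\sup_{x\in E}\rho(x)<\infty$, proving boundedness of $B$.

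For the representation, for each $x\in E$ I set $\hat\phi_x(g):=\tilde Bg(x)+\rho(x)g(x)$. When $g\ge 0$, the inequality $(g-g(x))^-\le g(x)$ (valid pointwise since $g\ge 0$) together with the inclusion $(g-g(x))^-\in C_x$ yields $\tilde B((g-g(x))^-)(x)\le\rho(x)g(x)$, while $\tilde B((g-g(x))^+)(x)\ge 0$; hence $\hat\phi_x(g)\ge 0$. Thus $\hat\phi_x$ is a positive bounded linear functional on $C(E^\Delta)$ with $\hat\phi_x(1)=\rho(x)$, and by Riesz--Markov--Kakutani it corresponds to a non-negative finite Radon measure $\nu_B(x,\cdot)$ of total mass $\rho(x)$; unwinding yields $\tilde Bg(x)=\int(g(\xi)-g(x))\nu_B(x,d\xi)$, which is \eqref{eqn2}. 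For the kernel property, I would approximate $\rho(x)$ monotonically from below by $\tilde Bh_x^k(x)$ with $h_x^k(\xi):=\min(1,k\,d(\xi,x))\in C_x$ (which is norm-continuous in $x$); this shows $\rho$ is Borel measurable, and a standard monotone-class argument then gives measurability of $x\mapsto\nu_B(x,A)$ for every Borel $A\subseteq E^\Delta$.

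Finally, boundedness of $B$ extends the positive minimum principle to $E^\Delta$: for $g\ge 0$ with $g(\Delta)=0=\inf g$ and any sequence $x_n\to\Delta$ in $E^\Delta$, the functions $g_n:=(g-g(x_n))^+$ satisfy $g_n(x_n)=0=\inf g_n$ and $\|g-g_n\|\le g(x_n)\to 0$, so continuity of $Bg$ at $\Delta$ gives $Bg(\Delta)=\lim_n Bg_n(x_n)\ge 0$. Applying the representation argument of the previous paragraph at $x=\Delta$ then produces the required measure $\nu_B(\Delta,\cdot)$.
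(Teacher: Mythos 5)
Your proof is correct, but it takes a much more self-contained route than the paper's. The paper's entire argument consists of the reduction you also perform --- write $Bg=\overline Bg+mg$ with $m:=B1$ and $\overline Bg(x):=B(g-g(x))(x)$ as in Remark~\ref{rem1}, so that $\overline B1=0$ and $\overline B$ satisfies the positive maximum principle on $E$ --- followed by a citation of Lemma~C.2 in \cite{CLS:19}, which delivers boundedness and the kernel representation in one stroke (that lemma in turn rests on the characterization of bounded operators satisfying the positive minimum principle in \cite{arendt:86}; cf.\ Remark~\ref{rem:equivalence}). You instead reprove the cited lemma from first principles: the gliding-hump series $H=\sum_n 2^{-n}h_n$ against the positive minimum principle to get $\rho(x)<\infty$, the uniform boundedness principle to get $\sup_x\rho(x)<\infty$ and hence boundedness of $B$, Riesz--Markov--Kakutani applied to the positive functional $g\mapsto \tilde Bg(x)+\rho(x)g(x)$ to produce $\nu_B(x,\cdot)$ (and \eqref{eqn2} after subtracting the mass $\rho(x)$), and a monotone-class argument for measurability of the kernel. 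All of these steps check out, including the extension to $x=\Delta$ via $g_n=(g-g(x_n))^+$ and norm-continuity of $B$, which the paper does not spell out either. One cosmetic point: define $h_x^k$ using a metric on the compact space $E^\Delta$ rather than on $E$, so that $h_x^k$ manifestly lies in $C_\Delta(E)$. What the paper's route buys is brevity; what yours buys is independence from \cite{CLS:19} and an explicit proof that the positive minimum principle on all of $C_\Delta(E)$ by itself forces boundedness.
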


 \begin{proof}
As explained in Remark~\ref{rem1} setting $\overline Bg(x):=B(g-g(x))(x)$ and $m(x):=B1(x)$ we obtain that $Bg=\overline Bg+mg$ for a map $m\in C_\Delta(E)$ and an operator $\overline B$ satisfying $\overline B1=0$ and the positive maximum principle on $E$. The claim then follows by Lemma~C.2 in \cite{CLS:19}.
\end{proof}

\begin{lemma} \label{lem:Q1}
Let $D\subseteq C_\Delta(E)$ be a dense linear subspace containing the constant function~$1$, and let $Q_1\colon D\otimes D\to  C_{\Delta}(E)$ be a linear operator. Then the following conditions are equivalent:  
\begin{enumerate}
\item \label{eq:positivity}
$\langle Q_1(g \otimes g), \nu \rangle \geq 0$  for all $g \in D$ and  $\nu \in M_+(E)$ with equality if $g \equiv 0$ on the
support of $\nu$.
\item \label{eq:positivity1}  $Q_1(g \otimes g)(x)\geq 0$ for all $g \in D$,  $x \in E$ with equality if 
$g(x)=0$.
\end{enumerate}
Both imply that $Q_1$ is of  form
\eqref{eqn4}
for some function  $\alpha \in C_{\Delta}(E)$ with values in $\mathbb{R}_+$.
\end{lemma}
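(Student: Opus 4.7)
The plan is to first establish the equivalence $(1) \Leftrightarrow (2)$, and then to deduce the diagonal form of $Q_1$ from (2) by applying the Cauchy--Schwarz inequality to the symmetric bilinear form $B_x(g,h) := Q_1(g \otimes h)(x)$. The key step will be the propagation from vanishing on the diagonal (i.e.\ $B_x(g,g) = 0$ when $g(x) = 0$) to the full off-diagonal identity $B_x(g,h) = 0$ whenever $g(x) = 0$; once this is in hand, the fact that $1 \in D$ reduces everything to the single number $B_x(1,1)$.

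For the equivalence, $(2) \Rightarrow (1)$ is immediate by integrating the pointwise inequality against $\nu \in M_+(E)$: if $g \equiv 0$ on $\supp(\nu)$, then (2) forces $Q_1(g \otimes g)$ to vanish on $\supp(\nu)$, so its integral against $\nu$ is zero. For $(1) \Rightarrow (2)$ it suffices to specialize to $\nu = \delta_x$ for each $x \in E$.

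Next, fix $x \in E$ and define $B_x \colon D \times D \to \mathbb{R}$ by $B_x(g,h) := Q_1(g \otimes h)(x)$; this is well-defined on $D \times D$ because $g \otimes h$ lies in $D \otimes D$ via the polarization identity $g \otimes h = \tfrac{1}{2}\bigl((g+h)\otimes(g+h) - g \otimes g - h \otimes h\bigr)$. Symmetry and bilinearity of $B_x$ follow from those of the symmetric tensor product and the linearity of $Q_1$, while condition (2) makes $B_x$ positive semidefinite with $B_x(g,g)=0$ whenever $g(x)=0$. The Cauchy--Schwarz inequality $|B_x(g,h)|^2 \leq B_x(g,g)\, B_x(h,h)$ then forces $B_x(g,h)=0$ for all $h \in D$ whenever $g(x)=0$. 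I expect this to be the only nontrivial step.

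With the off-diagonal vanishing in hand, decompose any $g \in D$ as $g = g(x)\cdot 1 + (g - g(x)\cdot 1)$, which is legal because $1 \in D$ by hypothesis; the second summand vanishes at $x$, so bilinearity yields $B_x(g,h) = g(x)\, B_x(1,h)$. Applying the same reasoning in the second slot gives $B_x(g,h) = \alpha(x)\, g(x)\, h(x)$ with $\alpha(x) := B_x(1,1) = Q_1(1 \otimes 1)(x)$. Rewriting this as $Q_1(g \otimes h)(x) = \alpha(x)\,(g \otimes h)(x,x)$ and extending linearly to all of $D \otimes D$ produces the claimed form~\eqref{eqn4}. Continuity $\alpha \in C_\Delta(E)$ is inherited from $\alpha = Q_1(1 \otimes 1)$ together with $Q_1 \colon D \otimes D \to C_\Delta(E)$, and non-negativity $\alpha \geq 0$ is condition (2) applied to $g = 1$.
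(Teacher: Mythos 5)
Your proof is correct and follows essentially the same route as the paper: both directions of the equivalence via integration and $\nu=\delta_x$, then Cauchy--Schwarz on the positive semidefinite bilinear form $(g,h)\mapsto Q_1(g\otimes h)(x)$ to show it only sees the values at $x$, and identification of $\alpha=Q_1(1\otimes 1)$ using $1\in D$. Your explicit decomposition $g = g(x)\cdot 1 + (g-g(x)\cdot 1)$ just spells out the step the paper leaves implicit under ``bilinearity then yields.''
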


\begin{proof}
Obviously assertion \ref{eq:positivity1} implies \ref{eq:positivity}. Conversely take $\nu=\delta_x$ to conclude  $Q_1(g \otimes g)(x)\geq 0$ and with equality if $g(x)=0$ as the support of $\delta_x$ is just $\{x\}$.

 For $g \in D \otimes D$ we show now that $Q_1(g)(x)$ depends on $g$  through its values at $(x,x)$. To this end, fix $x \in E$ and note that the map $(g, h)\mapsto Q_1(g\otimes h)(x)$ is bilinear as well as positive semidefinite as  $Q_1(g \otimes g)(x) \geq 0$ for all $g \in D$. Hence it satisfies the Cauchy--Schwarz inequality
\[
|Q_1(g\otimes h)(x)| \le \sqrt{ Q_1(g\otimes g)(x)}\, \sqrt{ Q_1(h\otimes h)(x)}.
\]
This together with \ref{eq:positivity1} implies that $Q_1(g\otimes h)(x)$ depends on $g$ and $h$ only through their values at $x$.

Bilinearity then yields $Q_1(g\otimes h)(x)=g(x)h(x)\alpha(x)$ for some $\alpha:E\to \R$. By non-negativity we also have that $0\leq Q_1(g\otimes g)(x)=g(x)^2\alpha(x)$ and thus that $\alpha(x)\geq0$.
Since $Q_1(g) \in C_{\Delta} (E)$ and as $1 \in D$, $x \mapsto \alpha(x)$ lies necessarily in $C_{\Delta}(E)$. Extending to all of $D \otimes D$ yields \eqref{eqn4}.

\end{proof}

Let us now recall the definition of a positive semidefinite kernel. 

\begin{definition}\label{def:posem}
 A symmetric function $K: (E^{\Delta})^2  \to \mathbb{R} $ is called \emph{positive semidefinite kernel} on $E^{\Delta}$  if
\[
\sum _{i,j=1}^{n}c_{i}c_{j}K(x_{i},x_{j})\geq 0
 \]
holds for any $ x_{1},\dots ,x_{n} \in E^{\Delta}$,  $c_{1},\dots ,c_{n}\in \mathbb{R}$ and $ n \in \mathbb{N}$.
\end{definition}

For our purposes we shall need the bigger set of copositive kernels, see, e.g., \cite{DDFV:16, KV:18}. This generalizes the notion of copositive matrices, i.e.~symmetric matrices $Q \in \mathbb{R}^{n \times n}$ such that $x^{\top}Q x \geq 0$ for all $x \in \mathbb{R}_+^n$.

\begin{definition}
 A symmetric function $K: E^2  \to \mathbb{R} $ is called \emph{copositive  kernel} on $ E$  if
\[
\sum _{i,j=1}^{n}c_{i}c_{j}K(x_{i},x_{j})\geq 0
 \]
holds for any $ x_{1},\dots ,x_{n} \in  E$,  $c_{1},\dots ,c_{n}\in \mathbb{R}_+$ and $ n \in \mathbb{N}$.
\end{definition}

\begin{remark}
In the following lemma copositive kernels naturally arise from the condition $\langle Q_2(g\otimes g), \nu^2 \rangle \geq 0$ for all  $\nu \in M_+(E)$. Note that even in the finite dimensional case a simple characterization of copositive matrices is not available,
see, e.g., \cite{HS:10, V:87}
Indeed, only up to dimension $4$, every copositive matrix can be represented as sum of a positive semidefinite one and a matrix with non-negative entries. For dimensions $n \geq 5$, this is no longer true and copositive matrices are a strict superset thereof for which no simple characterization is known. 

In our case copositivity of $Q_2(g\otimes g)$ (and some further conditions) translate  to the requirements of the $(\beta, \pi)$-representation, where in particular \eqref{eq:matrix} is a rather implicit condition. It can be easily verified when we have the decompostion of $Q_2(g\otimes g)$ into a positive semidefinite kernel and a non-negative function (see Remark \ref{rem:sufficient} below).
\end{remark}

\begin{lemma} \label{lem:copositivity}
Let  $D= C_\Delta(E)$ be a dense linear subspace containing the constant function~$1$, and let $Q_2\colon D\otimes D\to  \widehat{C}_{\Delta}(E^2)$ be a linear operator. Then the following conditions are equivalent:  
\begin{enumerate}
\item\label{it5i} \label{eq:positivity2}
$\langle Q_2(g \otimes g), \nu^2 \rangle \geq 0$  for all $g \in D$ and  $\nu \in M_+(E)$ with equality if $g \equiv 0$ on the
support of $\nu$.
\item\label{it5ii}
For all $g \in D$ the map $(x,y)\mapsto Q_2(g \otimes g)(x,y) $ is a copositive kernel on $ E$ and $ Q_2(g \otimes g)(x,y)=0$  if $g(x)=g(y)=0$.
\end{enumerate}
Both imply that $Q_2$ admits a $(\beta,\pi)$-representation and the corresponding parameters $\pi$ and $\beta$  are  bounded and continuous on $(E^\Delta)^2\setminus\{x=y\}$ and $\pi+\overline \pi+2\beta \in \widehat{C}_{\Delta}(E^2)$, where $\overline \pi(x,y)=\pi(y,x)$;

\end{lemma}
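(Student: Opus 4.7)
The plan is to first establish the equivalence of \ref{it5i} and \ref{it5ii}, and then use \ref{it5ii} to derive the $(\beta,\pi)$-representation, following closely the scheme of Lemma \ref{lem:Q1} for $Q_1$ but with a more delicate off-diagonal analysis.

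For the equivalence, \ref{it5ii}$\Rightarrow$\ref{it5i} follows by approximating any $\nu\in M_+(E)$ weakly by discrete measures $\nu_n=\sum_i c_i^{(n)}\delta_{x_i^{(n)}}$ with $c_i^{(n)}\ge 0$; each $\langle Q_2(g\otimes g),\nu_n^2\rangle$ is non-negative by copositivity, and since $Q_2(g\otimes g)\in\widehat C_\Delta(E^2)$ is continuous and bounded, the limit can be taken. The equality case when $g\equiv 0$ on $\supp(\nu)$ is immediate, as the integrand vanishes on $\supp(\nu)\times\supp(\nu)$ by the second half of \ref{it5ii}. Conversely, \ref{it5i}$\Rightarrow$\ref{it5ii} is obtained by plugging $\nu=\sum c_i\delta_{x_i}$ into \ref{it5i} for copositivity, and then taking successively $\nu=\delta_x,\delta_y,\delta_x+\delta_y$ to deduce the pointwise vanishing at $g(x)=g(y)=0$: the diagonal terms vanish individually, which forces the off-diagonal term to vanish too.

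The bulk of the work is the $(\beta,\pi)$-representation. For the diagonal I would mimic Lemma \ref{lem:Q1}: the map $(g,h)\mapsto Q_2(g\otimes h)(x,x)$ is a positive semidefinite bilinear form on $D$ that vanishes on $\{g(x)=0\}$, so Cauchy--Schwarz forces $Q_2(g\otimes h)(x,x)=g(x)h(x)\tilde\alpha(x)$ for some $\tilde\alpha\in C_\Delta(E)$ with $\tilde\alpha\ge 0$, and I set $\beta(x,x):=\tilde\alpha(x)$, $\pi(x,x):=0$. The main obstacle is the off-diagonal case $x\neq y$: I need to show that the symmetric bilinear form $B_{x,y}(g,h):=Q_2(g\otimes h)(x,y)$ depends on $(g,h)$ only through their values at $x$ and $y$, i.e.\ $B_{x,y}(g_0,h)=0$ whenever $g_0(x)=g_0(y)=0$. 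For this I apply copositivity at the pair $\{x,y\}$ to the perturbation $f=g_0+th$. Using $Q_2(g_0\otimes g_0)(x,y)=0$ and the diagonal formula just established, the copositivity inequality collapses to a quadratic in $t$ of the form
\[
A(c_1,c_2)\,t^2+4c_1c_2\,t\,Q_2(g_0\otimes h)(x,y)\ge 0,\qquad A(c_1,c_2)\ge 0,
\]
which must hold for all $t\in\mathbb R$ and $c_1,c_2>0$; this forces $Q_2(g_0\otimes h)(x,y)=0$. Combined with bilinearity and the decomposition $g=g(x)\phi_x+g(y)\phi_y+g_0$, where $\phi_x,\phi_y\in C_\Delta(E)=D$ are produced by Urysohn on the compact Hausdorff space $E^\Delta$, one gets
\[
B_{x,y}(g,h)=\alpha_1(x,y)g(x)h(x)+\alpha_2(x,y)g(y)h(y)+\alpha_3(x,y)(g(x)h(y)+g(y)h(x)).
\]
Symmetry of $Q_2(g\otimes g)$ in its spatial arguments enforces $\alpha_2(x,y)=\alpha_1(y,x)$, so setting $\pi(x,y):=2\alpha_1(x,y)$ and $\beta(x,y):=2\alpha_3(x,y)$ produces the $(\beta,\pi)$-representation.

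To finish, I would verify the remaining properties. Non-negativity $\pi\ge 0$ and the positive semidefiniteness of the matrices $A^{(n)}$ in \eqref{eq:matrix} come from plugging $\nu=\sum_i c_i\delta_{x_i}$ into $\langle Q_2(g\otimes g),\nu^2\rangle\ge 0$ and rewriting the resulting quadratic form in the vector $v_i:=c_ig(x_i)$, which ranges over all of $\mathbb R^n$ since $c_i>0$ are fixed and $g(x_i)$ are arbitrary real. Boundedness and continuity of $\pi,\beta$ off the diagonal follow by localizing: for $(x_0,y_0)\in(E^\Delta)^2$ with $x_0\neq y_0$, Urysohn yields $g\in C_\Delta(E)$ with $g\equiv 1$ in a neighbourhood of $x_0$ and $g\equiv 0$ in a neighbourhood of $y_0$, so that locally $\pi=2Q_2(g\otimes g)\in\widehat C_\Delta(E^2)$ is continuous and bounded; a second test function that does not vanish at either point then extracts $\beta$ off the diagonal. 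Finally, taking $g\equiv 1$ yields $\pi+\overline\pi+2\beta=2Q_2(1\otimes 1)\in\widehat C_\Delta(E^2)$.
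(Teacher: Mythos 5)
Your proposal follows the same overall architecture as the paper's proof: the equivalence of \ref{it5i} and \ref{it5ii} via discrete approximation and the test measures $\delta_x$, $\delta_x+\delta_y$; the diagonal handled by Cauchy--Schwarz exactly as in Lemma~\ref{lem:Q1}; the off-diagonal handled by showing that $(g,h)\mapsto Q_2(g\otimes h)(x,y)$ depends only on the values at $x$ and $y$; and condition \eqref{eq:matrix} read off from the quadratic form in the vector $(c_ig(x_i))_i$. The one place where you genuinely improve on the paper is the off-diagonal locality step: the paper argues somewhat loosely by contradiction ("suppose it depended on $g(z_1)g(z_2)$\dots"), whereas your perturbation $f=g_0+th$ with $g_0(x)=g_0(y)=0$, combined with the already-established diagonal formula, yields a clean inequality $A t^2+4c_1c_2\,t\,Q_2(g_0\otimes h)(x,y)\ge 0$ for all $t\in\R$ with $A\ge 0$, which forces the linear coefficient to vanish. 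This is a tighter argument and I would keep it.

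There is, however, one incomplete step. The lemma asserts that $\pi$ and $\beta$ are \emph{bounded} on all of $(E^\Delta)^2\setminus\{x=y\}$, and your Urysohn localization only gives continuity and local boundedness at each fixed off-diagonal point $(x_0,y_0)$. Since $(E^\Delta)^2\setminus\{x=y\}$ is not compact, this does not rule out $\pi(x_n,y_n)\to\infty$ along a sequence with $x_n\neq y_n$ approaching the diagonal, where your separating test functions degenerate. The paper closes this by a sequential contradiction: if $\pi(x_n,y_n)\to\infty$ with $(x_n,y_n)\to(x,y)$, one chooses $g$ with $g(x_n)^4=1/\pi(x_n,y_n)$ (and $g$ vanishing at the $y_n$) and contradicts $Q_2(g\otimes g)\in\widehat C_\Delta(E^2)$; an analogous argument gives continuity up to the boundary of the off-diagonal set. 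You should add such an argument (or an equivalent uniform estimate) to cover boundedness near the diagonal; the rest of your proof is sound.
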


\begin{proof}
We start by showing that \ref{it5i} and \ref{it5ii} are equivalent and first prove that \ref{it5ii} implies \ref{it5i}.
The copositive kernel property of $Q_2(g\otimes g)$ in \ref{it5ii} yields $\langle Q_2(g \otimes g), \nu^2 \rangle \geq 0$ for all $\nu \in M_+(E^{\Delta})$ which follows by approximating $\nu$ weakly via $\sum_{i=1}^n c_i \delta_{x_i}$. If $g \equiv 0$ on the support of $\nu$, then clearly $g(x)=g(y)=0$  for all $x,y \in \supp(\nu)$ and by assumption $Q_2(g \otimes g)(x,y)=0$ as well, which yields $\langle Q_2(g \otimes g), \nu^2 \rangle = 0$.

Conversely, the fact that $\langle Q_2(g \otimes g), \nu^2 \rangle \geq 0$ for all  $\nu \in M_+(E)$ implies that $Q_2(g\otimes g)$ is a copositive kernel. Next we show that $ Q_2(g \otimes g)(x,y)=0$  if $g(x)=g(y)=0$.
Indeed, if $\nu=\delta_x$ and $g\equiv 0$ on the support of $\delta_x$, i.e. $g(x)=0$, we conclude  $Q_2(g \otimes g)(x,x)= 0$. Similarly, take
$\nu=(\delta_x+ \delta_y)$ for $x \neq y \in  E$ to obtain
\[
Q_2(g \otimes g)(x,x)+ 2Q_2(g \otimes g)(x,y)+ Q_2(g \otimes g)(y,y)=\langle Q_2(g \otimes g), \nu^2 \rangle.
\]
If $g\equiv 0$ on the support of $\nu=(\delta_x+ \delta_y)$, then $g(x)=g(y)=0$, which in turn implies together with ~\ref{eq:positivity2} and  $Q_2(g \otimes g)(x,x)=Q_2(g \otimes g)(y,y)=0$ that $Q_2(g \otimes g)(x,y)=0$.
This shows the equivalence of \ref{it5i} and \ref{it5ii}.

To prove form \eqref{eq:copositiveconcrete}, let us first note that if $E$ is a singleton, say $\{x\}$,  \eqref{eq:copositiveconcrete} reduces to 
\[
Q_2(g)(x,x)=K(g)(x,x)=\beta(x,x) g(x,x), \quad g \in D \otimes D
\]
with $\beta(x,x)\geq 0$, 
which is obvious by linearity and non-negativity.

For the general case, fix $x \in  E$ 
and note that the map $(g, h)\mapsto Q_2(g\otimes h)(x,x)$ is bilinear as well as positive semidefinite as  $Q_2(g \otimes g)(x,x)\geq 0$ for all $g \in D$. Hence it satisfies the Cauchy--Schwarz inequality
\[
|Q_2(g\otimes h)(x,x)| \le \sqrt{ Q_2(g\otimes g)(x,x)}\, \sqrt{ Q_2(h\otimes h)(x,x)}.
\]
This together with the fact that $Q_2(g\otimes g)(x,x)=0$ if $g(x)=0$ implies that $Q(g\otimes h)(x,x)$ depends on $g$ and $h$ only through their values at $x$.
Proceeding as in the proof of \ref{lem:Q1} we can conclude that $Q_2(g)(x,x)=\beta(x,x)g(x,x)$ for  all $ g \in D \otimes D$.

Let us now consider $Q_2(g\otimes g)(x,y)$ for $x \neq y$.
Recall that copositivity of $(x,y) \mapsto Q_2(g \otimes g)(x,y)$, means that
\[
\sum_{i,j=1}^n c_i c_jQ_2(g \otimes g)(x_i,x_j)=\sum_{i=1}^n c_i^2 \beta(x_i,x_i) g(x_i)g(x_i)+  \sum_{i \neq j} c_i c_jQ_2(g \otimes g)(x_i,x_j) \geq 0
\]
for any  $ x_{1},\dots ,x_{n} \in  E$,  $c_{1},\dots ,c_{n}\in \mathbb{R}_+$ and $ n \in \mathbb{N}$. 
This together with the fact that $Q_2(g \otimes g)(x,y)=0$ if $g(x)=g(y)=0$ implies that $Q_2(g \otimes g)(x,y)$ can depend on $g$ only via $g(x)^2$, $g(y)^2$ and $g(x)g(y)$. Indeed, suppose that it depended on $g(z_1)g(z_2)$ for some $(z_1,z_2)\neq (x,y)$. Choosing $g$ such that $g(x)=g(y)=0$ and $g(z_i) \neq 0$, yields $Q_2(g \otimes g)(x,y)\neq 0$, a contradiction. If $z_1=x$ and $z_2\neq x,y$, we can always choose $g$ such that the above non-negativity is not satisfied. This proves the claim and we obtain by polarization that the bilinear map $(g,h) \mapsto
Q_2 (g \otimes h)(x,y)$ depends on $g$ and $h$ only through their
values at $x$ and $y$.

Bilinearity and symmetry then yield $Q_2(g\otimes h)(x,y)=(g(x),g(y))^\top A(x,y)(h(x),h(y))$ for some $A(x,y) \in \mathbb{S}^2$. Setting $\pi(x,y)=2A_{11}(x,y)$, $\pi(y,x)=2A_{22}(x,y)$ and $\beta(x,y)=2A_{12}(x,y)$ yields
 the following representation
\[
Q_2(g \otimes h)(x,y)=\frac{1}{2}(\pi(x,y)g(x)h(x)+ \pi(y,x)g(y)h(y)+2\beta(x,y)g(x)h(y)).
\]
Extending this to all $ g \in D \otimes D$ gives \eqref{eq:copositiveconcrete}.

Let us now prove the remaining properties of $\pi$ and $\beta$. By \eqref{eq:copositiveconcrete} we can without loss of generalities set $\pi(x,x)=0$ for each $x\in E$. Indeed, if this is not the case it suffices to replace $\beta(x,x)$ with $\pi(x,x)+\beta(x,x)$.
Concerning the non-negativity of $\pi(x,y)$ for $x\neq y$, choose $g$ such that $g(x) \neq 0$ and $g(y)=0$. Then the above form and copositivity of $(x,y)\mapsto Q_2(g \otimes g)(x,y)$ imply
\begin{align*}
0 &\leq c_1^2Q_2(g \otimes g)(x,x)+c_2^2Q_2(g \otimes g)(y,y)+2c_1c_2 Q_2(g \otimes g)(x,y)\\
&=(c_1^2\beta(x,x)+c_1c_2\pi(x,y))g(x)^2 
\end{align*}
for all $c_1, c_2 \geq 0$. Hence, $\pi(x,y)\geq 0$ and analogously $\pi(y,x)\geq 0$.
Finally, copositivity yields for all $n \in \mathbb{N}$,  $x_1, \ldots, x_n \in  E$ and $c_1, \ldots, c_n \in \mathbb{R}_{++}$, 
\begin{align*}
0 &\leq \sum_{i=1}^n c_ic_jQ_2(g \otimes g)(x_i,x_j)\\
&=\sum_{i=1}^n c_ic_j (\frac{1}{2}\pi(x_i,x_j) g(x_i)^2+\frac{1}{2}\pi(x_j,x_i) g(x_j)^2 + \beta(x_i,x_j) g(x_i)g(x_j))\\
&=(c_1 g(x_1), \ldots, c_n g(x_n)) \left(\beta_n+ \begin{pmatrix} \sum_{j=1}^n \frac{c_j}{c_1} \pi(x_1, x_j) &  &\\
& \ddots &\\
& & \sum_{j=1}^n \frac{c_j}{c_n} \pi(x_n, x_j)
 \end{pmatrix} \right)\begin{pmatrix} c_1 g(x_1)\\ \vdots \\c_n g(x_n) \end{pmatrix},
\end{align*}
where $\beta_n \in \mathbb{S}^{n}$ with entries $\beta_{n,ij}=\beta(x_i,x_j)$.
As this holds for all $ (g(x_1), \ldots, g(x_n))$ and thus by the density of $D$ for all vectors in $\mathbb{R}^n$, Condition \eqref{eq:matrix} follows.

The last regularity condition follows from the fact that $(x,y) \mapsto Q_2(g)(x,y) \in \widehat{C}_{\Delta}(E^2)$ for all $ g \in D \otimes D$.

Finally, assume that $\pi$ is not continuous along the sequence $(x_n,y_n)_{n\in \N}$ converging to $(x,y)\in (E^\Delta)^2$ with $x\neq y$. Without loss of generalities $x_n\neq y_m$ for each $n,m$. Choosing $g\in C_\Delta(E)$ such that $g(x_n)=1$ and $g(y_n)=0$ it suffices to use that $Q_2(g\otimes g)\in C_\Delta(E^2)$ to get a contradiction. Similarly, assuming that $\pi$ explodes along the sequence $(x_n,y_n)_{n\in \N}$ converging to $(x,y)\in (E^\Delta)^2$ one can without loss of generality construct $g(x_n)^4=1/\pi(x_n,y_n)$ and obtain a contradiction. The properties of $\beta$ follow form the fact that $\frac 1 2 (\pi+\overline \pi)+\beta =Q_2(1\otimes 1)\in C_\Delta(E^2)$. The continuity of those maps  guarantees that the parameters of the $(\beta,\pi)$-representation satisfy the needed conditions.
\end{proof}

\begin{remark}\label{rem:sufficient}
Note that \eqref{eq:matrix} is clearly implied if $\beta: (E^{\Delta})^2  \to \mathbb{R}$ is a positive semidefinite kernel. If this is the case, we get the following decomposition of the copositive kernel $(x,y) \mapsto Q_2(g\otimes g)(x,y)$
\begin{align}\label{eq:decomposition}
Q_2(g \otimes g)(x,y)=K(g\otimes g)(x,y)+P(g\otimes g)(x,y),
\end{align}
where we set
\[
K(g)(x,y)=\beta(x,y)g(x,y)\quad \text{and} \quad P(g)(x,y)=\frac{1}{2} (\pi(x,y) g(x,x) + \pi(y,x) g(y,y)).
\]
Then $K$ and $P$ are linear operators on $D \otimes D$. Moreover, since $\beta$ is a positive semidefinite kernel we have
\[
\sum_{i,j} c_i c_j K(g\otimes g)(x_i,x_j)=
\sum_{i,j} c_ic_j \beta(x_i,x_j)g(x_i)g(x_j)=\sum_{i,j} \widetilde{c}_i\widetilde{c}_j \beta(x_i,x_j)\geq 0
\]
where $\widetilde{c}_i=c_ig(x_i)$ for $i=1, \ldots, n$, whence $K(g\otimes g)$ is a positive  semidefinite kernel for all $g \in D$. Moreover, $P(g \otimes g)$ is non-negative, which follows from the non-negativity of $\pi$ and since $\pi(x,x)=0$, we also have $P(g \otimes g)(x,x)=0$.

Expression \eqref{eq:decomposition} is thus a decomposition  a into a positive semidefinite kernel $K(g\otimes g)$ and  a non-negative map $P(g\otimes g): (E^{\Delta})^2 \to \mathbb{R}_+$.

\end{remark}

\section{Existence for martingale problems} \label{app_existence}

This section is dedicated to establish the (essential) equivalence
between the existence of a solution to the martingale problem and the positive maximum principle. 
Here, $E$ is a locally compact Polish space, $D$ a dense linear subspace of $C_\Delta (E)$ containing the constant function $1$, and $L$ a linear operator with domain $\mathcal{D}$ (as defined in~\eqref{eq:domain}) satisfying \eqref{eq:op}. 
The first lemma asserts that the positive maximum principle is implied if a solution to the martingale problem exists.

\begin{lemma}\label{IIIlem9}
If there exists an $M_+(E)$- (or $M_+(E^{\Delta})$ respectively) valued solution $X$ to the martingale problem for $L:\Dcal \to C(M_+(E))$ for each initial condition in $M_+(E)$ (or $M_+(E^{\Delta})$ respectively), then $L$ satisfies the positive maximum principle on $M_+(E)$ (or $M_+(E^{\Delta})$ respectively).
\end{lemma}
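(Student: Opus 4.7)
The plan is to use the standard argument that extracts the infinitesimal generator from the martingale problem and combine it with the fact that a maximizer of $f$ is also a (weak) maximizer along any path of $X$ starting there. Fix $f\in\mathcal D$ and suppose there is $\nu^*\in\mathcal S$ with $f(\nu^*)=\sup_{\mathcal S}f\geq 0$, where $\mathcal S$ denotes either $M_+(E)$ or $M_+(E^\Delta)$. By assumption there exists a c\`adl\`ag $\mathcal S$-valued solution $X$ to the martingale problem for $L$ with $X_0=\nu^*$ almost surely. Then
\[
N^f_t \;=\; f(X_t)-f(\nu^*)-\int_0^t Lf(X_s)\,ds
\]
is a local martingale, and maximality gives $f(X_t)-f(\nu^*)\leq 0$ for every $t\geq 0$.

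Next I would localize so that I can take expectations and pass to the limit $t\downarrow 0$. Since $Lf\in C(M_+(E))$ (and extends continuously wherever the path of $X$ lives, using Lemma~\ref{IIL:Psmooth} and Lemma~\ref{Fcont}), and since $X$ is right continuous at $0$ with $X_0=\nu^*$, the stopping time
\[
\sigma \;:=\; \inf\bigl\{t\geq 0\,:\,|Lf(X_t)-Lf(\nu^*)|>1\bigr\}
\]
is strictly positive almost surely. Choosing a localizing sequence $(\tau_n)$ for $N^f$ and applying optional stopping,
\[
\mathbb E\Bigl[\int_0^{t\wedge\sigma\wedge\tau_n} Lf(X_s)\,ds\Bigr] \;=\; \mathbb E\bigl[f(X_{t\wedge\sigma\wedge\tau_n})-f(\nu^*)\bigr]\;\leq\;0.
\]
The integrand is bounded by $|Lf(\nu^*)|+1$ on $[0,\sigma)$, so letting $n\to\infty$ and using dominated convergence yields the same inequality with $\tau_n$ removed.

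Finally, dividing by $t>0$ and letting $t\downarrow 0$, right continuity of $X$ together with continuity of $Lf$ give $Lf(X_s)\to Lf(\nu^*)$ almost surely as $s\downarrow 0$, and $(t\wedge\sigma)/t\to 1$ almost surely. Bounded convergence then yields
\[
Lf(\nu^*)\;=\;\lim_{t\downarrow 0}\mathbb E\Bigl[\tfrac{1}{t}\int_0^{t\wedge\sigma}Lf(X_s)\,ds\Bigr]\;\leq\;0,
\]
which is the positive maximum principle on $\mathcal S$. The only mildly delicate point is that $\mathcal S=M_+(E)$ need not be locally compact when $E$ is non-compact, so I cannot freely localize through a compact neighborhood of $\nu^*$; this is precisely why I use the stopping time $\sigma$ defined through the continuous function $Lf$ itself rather than through a topological neighborhood of $\nu^*$. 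Everything else is a routine application of optional stopping and dominated convergence.
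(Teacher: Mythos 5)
Your argument is correct and is essentially the standard one the paper itself invokes: the paper omits the proof of this lemma and refers to Lemma~2.3 of \cite{FL:16}, which runs exactly as you do — start the solution at the maximizer, use the (localized) martingale property, divide by $t$ and let $t\downarrow 0$. Your extra care with the stopping time $\sigma$ built from $Lf$ (to handle the lack of local compactness of $M_+(E)$) is a sensible and valid refinement of that standard argument.
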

The proof of Lemma~\ref{IIIlem9} is well-known and we therefore do not state it here; see for instance  the proof of Lemma~2.3 in \cite{FL:16}.

The next lemma is an adaptation of a classical result from \cite{EK:09}. For the application of this result it is crucial that $L$ is an operator on the space of $C_0$-functions on a locally compact, separable, metrizable space. Since this is not  the case for $M_+(E)$ if $E$ is non-compact, we work on $M_+(E^\Delta)$, which is a locally compact Polish space with respect to the topology of weak convergence which follows from \cite[Section 3.1]{D:93} and \cite[Remark~1.2.3, pages 542-543]{L:70}.

\begin{lemma}\label{IIIlem8}
Suppose that $L$ is of form \eqref{eq:op} and satisfies the positive maximum principle on $M_+(E^\Delta)$. 

\begin{enumerate}
\item\label{it7i} Then for every initial condition in $M_+(E^{\Delta})$, there exists a  continuous $M_+(E^{\Delta})$-valued solution to the martingale problem for $L:\Dcal\to C(M_+(E))$.
\item\label{it7ii}
Let the linear operator $B_0: D \to \mathbb{R}$ be given by $B_0(g)=\langle g, b \rangle$ with $b \in M(E^{\Delta})$. If  $X_0 \in M_+(E)$, $b(\Delta)\leq 0$,
and  $\text{bp-}\lim_{n \to \infty}(B_1(1-g_n)-m(1-g_n))\leq 0$ on $E^\Delta$, where $m\in C_\Delta(\R)$ and $g_n \in D \cap C_0(E)$ is a sequence such that $\text{bp-}\lim_{n \to \infty}g_n= 1_E$, then any solution to the martingale problem takes values  in $M_+(E)$. 
\end{enumerate}
 
\end{lemma}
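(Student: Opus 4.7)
The plan is to establish part~\ref{it7i} via a standard application of \cite[Theorem~4.5.4]{EK:09} combined with Lemma~\ref{IIlem5}, and part~\ref{it7ii} by testing the martingale problem against the cylindrical polynomials $f_n(\nu):=\langle 1-g_n,\nu\rangle$ and carefully sending $n\to\infty$.

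For part~\ref{it7i}, the key point is that $M_+(E^\Delta)$ is locally compact Polish and that by Lemma~\ref{IIIlem1mod} the algebra $F^D_c$ is dense in $C_0(M_+(E^\Delta))$. Restricting $L$ to $F^D_c$ preserves the positive maximum principle assumed on $M_+(E^\Delta)$, while the representation~\eqref{eq:op} forces $L1=0$ since every term involves a directional derivative, so $L$ is conservative. Applying \cite[Theorem~4.5.4]{EK:09} on the one-point compactification $M^{\mathfrak{\Delta}}_+(E^\Delta)$ then produces a c\`adl\`ag solution for every initial condition in $M_+(E^\Delta)$, and conservativity prevents the paths from ever reaching the cemetery point $\mathfrak{\Delta}$. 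Since $L$ has form~\eqref{eq:op}, Theorem~\ref{Lpol} ensures that the associated carr\'e-du-champ is an $M_+(E)$-derivation, so Lemma~\ref{IIlem5} upgrades c\`adl\`ag paths to continuous ones.

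For part~\ref{it7ii}, note first that $f_n\in P^D$ (as $1-g_n\in D$) and that
\[
Lf_n(\nu)=\langle 1-g_n,b\rangle+\langle B_1(1-g_n)-m(1-g_n),\nu\rangle+\langle m(1-g_n),\nu\rangle.
\]
By Lemma~\ref{lem:martingality}\ref{it3i}, $N^{f_n}$ is a square-integrable martingale, so for every bounded stopping time $\tau\le T$ optional stopping gives
\[
\mathbb{E}[f_n(X_\tau)]=\langle 1-g_n,X_0\rangle+\mathbb{E}\!\left[\int_0^\tau Lf_n(X_s)\,ds\right].
\]
The assumptions $X_0(\Delta)=0$, $b(\Delta)\le 0$ and $\text{bp-}\lim(B_1(1-g_n)-m(1-g_n))\le 0$, combined with dominated convergence (justified by the uniform boundedness implicit in the $\text{bp-}\lim$ hypothesis and by the first moment bound of Lemma~\ref{lem:martingality}\ref{it3i}), yield in the limit
\[
\mathbb{E}[X_\tau(\Delta)]\le m(\Delta)\,\mathbb{E}\!\left[\int_0^\tau X_s(\Delta)\,ds\right].
\]
Specializing first to $\tau\equiv t$ and applying Gronwall's inequality forces $\mathbb{E}[X_t(\Delta)]=0$ for all $t\ge 0$; plugging this back via Fubini then shows $X_\tau(\Delta)=0$ almost surely for every bounded stopping time $\tau$. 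Applying this to $\sigma_c\wedge T$, where $\sigma_c:=\inf\{t\ge 0:X_t(\Delta)\ge c\}$ is a stopping time thanks to path continuity of $X$ together with upper semicontinuity of $\nu\mapsto\nu(\Delta)$ (which ensures $X_{\sigma_c}(\Delta)\ge c$ on $\{\sigma_c<\infty\}$), forces $\mathbb{P}(\sigma_c\le T)=0$. Letting first $T\to\infty$ and then $c\downarrow 0$ along a rational sequence concludes that $X_t\in M_+(E)$ for all $t\ge 0$ almost surely.

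The main technical obstacle is the limit passage in part~\ref{it7ii}: justifying dominated convergence in $\mathbb{E}[\langle B_1(1-g_n)-m(1-g_n),X_s\rangle]$ uniformly in $s\in[0,T]$ requires both the uniform boundedness implicit in the $\text{bp-}\lim$ hypothesis and the first moment control on $X_s(E^\Delta)$ furnished by Lemma~\ref{lem:martingality}\ref{it3i}. A secondary subtlety in part~\ref{it7i} is to check that the hypotheses of \cite[Theorem~4.5.4]{EK:09} remain valid on the dense core $F^D_c\subseteq C_0(M_+(E^\Delta))$, so that the positive maximum principle on $\Dcal$ transfers faithfully to this smaller test class.
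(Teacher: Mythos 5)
Your part~\ref{it7ii} follows essentially the paper's own argument (test against $h_n=1-g_n$, use the true-martingale property from Lemma~\ref{lem:martingality}, pass to the limit, Gronwall), and your extra stopping-time step to get the conclusion simultaneously in $t$ is a harmless refinement. The problem is in part~\ref{it7i}, where there is a genuine gap: you assert that ``the representation~\eqref{eq:op} forces $L1=0$, so $L$ is conservative'' and that this ``prevents the paths from ever reaching the cemetery point $\mathfrak{\Delta}$.'' This is a non sequitur. The operator fed into \cite[Theorem~4.5.4]{EK:09} is $L$ restricted to $F^D_c\subseteq C_0(M_+(E^\Delta))$, and the constant function $1$ on $M_+(E^\Delta)$ does not belong to $C_0(M_+(E^\Delta))$, so the identity $L1=0$ is invisible to that theorem. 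The point $\mathfrak{\Delta}$ is the point at infinity of $M_+(E^\Delta)$, i.e.\ infinite total mass, and what must be excluded is explosion of $\langle 1,X_t\rangle$ in finite time; the theorem of Ethier--Kurtz only delivers a c\`adl\`ag solution in the compactification $M^{\mathfrak{\Delta}}_+(E^\Delta)$. Ruling out explosion requires a genuine Lyapunov/moment argument: the paper takes $f(\nu)=1+\langle 1,\nu\rangle^k$, truncates it by $\phi_m$ to land in the admissible test class, exploits the bound $|Lf|\leq Kf$ (which uses the polynomial structure of \eqref{eq:op}), and combines stopping times $\tau_n=\inf\{t:\langle 1,X_t\rangle>n\}$ with Fatou and Gronwall to conclude $\P(\tau_{\mathfrak{\Delta}}>t)=1$. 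None of this is replaced by the formal observation $L1=0$.

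A second, related omission in part~\ref{it7i}: the martingale problem for $L:\Dcal\to C(M_+(E))$ requires \eqref{martprob} to be a local martingale for \emph{every} $f\in\Dcal$, including the unbounded polynomials in $P^D$, whereas \cite[Theorem~4.5.4]{EK:09} only yields it for the compactly supported cylindrical functions in $F^D_c$. The paper extends the property to all of $\Dcal$ by the same truncation-and-localization device (using that the $\tau_n$ increase to infinity, which is exactly the non-explosion statement above). Your proposal stops at the $F^D_c$ level, so even granting the conservativity claim, the conclusion as stated is not reached. The remaining ingredients of your part~\ref{it7i} (density of $F^D_c$, well-definedness of the restriction, and the upgrade to continuous paths via the derivation property and \cite{BE:85}) are in line with the paper.
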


\begin{proof}
We verify the  conditions of  \cite[Theorem~4.5.4]{EK:09}.
As already mentioned, $M_+(E^\Delta)$ is a  locally compact, separable and metrizable space. Moreover, by Lemma~\ref{IIIlem1mod}, we have that
\[
F^D_c:=F^D(M_+(E^\Delta)) \cap C_c(M_+(E^{\Delta}))
\]
 is a dense subset of $C_0(M_+(E^\Delta))$, to which we restrict the domain of $L$ for the moment.
 Moreover, the positive maximum principle yields that $Lf|_{M_+(E^\Delta)}=Lh|_{M_+(E^\Delta)}$ for all $f,h\in F^D_c$ (extended to $M(E^{\Delta})$) such that $f|_{M_+(E^\Delta)}=h|_{M_+(E^\Delta)}$. Note also that the form of \eqref{eq:op} implies that $L(F^D_c)\subseteq C_0(M_+(E^\Delta))$, so that we may
 regard $L|_{F^D_c}$ as an operator  on $C_0(M_+(E^\Delta))$. 
This means that all the assumptions of in \cite[Theorem~4.5.4]{EK:09} are satisfied.
Define now according to the same theorem the linear operator $L^{\mathfrak{\Delta}}$ on $C(M^{\mathfrak{\Delta}}_+(E^\Delta))$ by 
\[
L^{\mathfrak{\Delta}}f|_{M_+(E^\Delta)}=L((f-f(\mathfrak{\Delta}))|_{M_+(E^\Delta)}), \quad L^{\mathfrak{\Delta}}f(\mathfrak{\Delta})=0
\]
for all $f \in C(M^{\mathfrak{\Delta}}_+(E^\Delta))$ such that $(f-f(\mathfrak{\Delta})_{M_+(E^\Delta)}) \in F^D_c$.

Then \cite[Theorem~4.5.4]{EK:09} yields that for every initial condition in $M^{\mathfrak{\Delta}}_+(E^{\Delta})$, 
there exists a solution $X_t$ to the martingale problem for $L^{\mathfrak{\Delta}}$ with càdlàg sample paths  taking values in $M^{\mathfrak{\Delta}}_+(E^{\Delta})$.
Indeed, we obtain that
\eqref{martprob} is a bounded local martingale (and thus a true martingale) for each $f \in F^D_c$ where $L$ is replaced by $L^{\mathfrak{\Delta}}$. Moreover, by Proposition 2 in \cite{BE:85} we also know that $t\mapsto f(X_t)$ is continuous for each $f \in F^D_c$.

\

Define now $\tau_n:=\inf\{t>0\colon\langle 1,X_t\rangle>n\}$ and  set $\tau_{ \mathfrak{\Delta}}:=\lim_{n\to\infty}\tau_n$. We now aim to show that $\P(\tau_{ \mathfrak{\Delta}}>t)=1$, showing that $\P(X_t\neq  \mathfrak{\Delta})=1$.
Fix $k\in \N$  and $\phi(x):=1+x^k$. Consider an increasing sequence $\phi_m\in C_c^\infty(\R)$ such that $\phi_m(x)=x$ for $|x|\leq m$ and $\phi_m(x)=0$ for $|x|>m+1$. Set
   $f(\nu):=\phi(\langle 1,\nu\rangle)$ and $f_m:=\phi_m\circ f$. By continuity of $t\mapsto f_m(X_t)$, we already know that
  $$\tau_n<\tau_{ \mathfrak{\Delta}}\qquad\text{ and }\qquad\langle 1,X_{\tau_n}\rangle=n$$
   for each $n<m$. Observe then that
$\partial f_m(\nu)=\partial f(\nu)$ and $\partial^2 f(\nu)=\partial^2 f_m(\nu)$ for each $\nu\in M_+(E^\Delta)$ such that $\langle 1, \nu\rangle\leq m$. This implies that
$$f(X_{\tau_n\land t})-f(X_0)-\int_0^{\tau_n\land t}  Lf(X_s)ds$$
is a bounded local martingale for each $n,m$ such that $n,\langle 1, X_0\rangle<m$, and thus a true martingale.

Now, observe that since $L$ is given by \eqref{eq:op} it holds 
$$|Lf|\leq Kf$$
 for some $K>0$. By Fatou lemma  this yields
$$\E[\lim_{n\to\infty}f(X_{\tau_n\land t})  
\leq\lim_{n\to\infty}\E[f(X_{\tau_n\land t})
]
\leq \lim_{n\to\infty}\bigg(f(X_{0})+K\int_0^{ t}  \E[f(X_{\tau_n\land s})] ds\bigg).$$
By the Gronwall inequality we can thus conclude that
$$\E[1+(\lim_{n\to\infty}n1_{\{\tau_n\leq t\}}+\lim_{n\to\infty}\langle 1,X_t  \rangle1_{\{\tau_n> t\}})^k]
=\E[\lim_{n\to\infty}f(X_{\tau_n\land t})
]\leq f(X_0)\exp(Kt)$$
and hence that $\P(\tau_{ \mathfrak{\Delta}}>t)=1$ and 
$
\E[f(X_t  )]\leq f( X_0)\exp(Kt).
$

Finally, we need to prove the local martingale property of \eqref{martprob} for $g\in\Dcal\setminus F^D_c$. Since we already know that $(\tau_n)_{n\in\N}$ increases to infinity, the claim follows by noticing that setting  $g_m:=\phi_m\circ g$ for $m$ large enough and proceeding as before we can prove that the process
$$g(X_{\tau_n\land t})-g(X_0)-\int_0^{\tau_n\land t}  Lg(X_s)ds$$
is a true martingale.

For the second part, set $h_n:=1-g_n$.
We know from Lemma \eqref{lem:martingality} that
\[
\langle h_n,X_t\rangle-\langle h_n,X_0\rangle-\int_0^t B_0h_n+\langle B_1h_n,X_s\rangle ds
\]
is a true martingale. We thus get that
\begin{align*}
\E[\langle h_n,X_t\rangle]&=\E[\langle h_n,X_0\rangle]+\int_0^t \E[B_0h_n+\langle B_1h_n,X_s\rangle] ds\\
&\leq \E[\langle h_n,X_0\rangle]+\E[B_0h_n]t+\int_0^t \langle B_1h_n-mh_n,X_s\rangle+\|m\|\E[\langle h_n,X_s\rangle] ds,
\end{align*}
which by the Gronwall inequality yields
$$\E[\langle h_n,X_t\rangle]\leq a(t)+\int_0^ta(s)\|m\|e^{\|m\|(t-s)}ds,$$
where $a(t)=\E[\langle h_n,X_0\rangle]+B_0h_nt+\int_0^t \langle B_1h_n-mh_n,X_s\rangle ds$. We then get that
$$\E[\langle h_n,X_t\rangle]\leq e^{\|m\|t}\Big(\E[\langle h_n,X_0\rangle]+|B_0h_n|t+\int_0^t \langle |B_1h_n-mh_n|,X_s\rangle ds\Big).$$
By the 
 dominated convergence and the assumptions on $X_0$, $B_0$ and $B_1$ we then get
 \begin{equation}\label{eq:nochargeDelta}
\begin{split}
\E[X_t(\Delta)]&=\E[\lim_{n\to \infty} \langle h_n,X_t\rangle]\\
&\leq \lim_{n\to \infty} e^{\|m\|t}\Big(\E[\langle h_n,X_0\rangle]+|B_0h_n|t+\int_0^t \langle |B_1h_n-mh_n|,X_s\rangle ds\Big)\\
&\leq  e^{\|m\|t}(X_0(\Delta)+ |b(\Delta)|t)\\
&\leq 0.
\end{split}
\end{equation}
Non-negativity of $X_t(\Delta)$ implies that $X_t(\Delta)=0$ a.s., whence $X_t \in M_+(E)$.
Finally, note that a c\`adl\`ag process $X$ on $M_+(E^\Delta)$ such that $X_t(\Delta)=0$ a.s. is c\`adl\`ag also with respect to the topology of weak convergence on $M_+(E)$.
\end{proof}

\section{Generators of strongly continuous positive groups}\label{app:C}

Following \cite{arendt:86}, we recall here the main tools behind the characterization of generators of strongly continuous positive groups. We start by introducing the definitions of $C_\Delta(E)$-derivations, flows and cocycles. The notion of a $C_\Delta(E)$-derivation is similar to Definition~\ref{def:derivation}, where we considered  however bilinear maps.

\begin{definition} An operator $\delta$ on $C_\Delta(E)$ is called $C_\Delta(E)$-derivation if its domain $D(\delta)$ is a subalgebra of $C_\Delta(E)$  containing $1$ such that
$$\delta(fg)=(\delta f)g + f(\delta g)\quad \text{for all } f,g\in D(\delta).$$
Note that this implies $\delta 1=0$.
\end{definition}

\begin{definition} A mapping $\Phi:\mathbb{R}\times E^\Delta \rightarrow E^\Delta$ is called a flow on $E^\Delta$ if the maps $\Phi_t:E^\Delta \rightarrow E^\Delta$ given by $\Phi_t(x)=\Phi(t,x)$ are continuous and satisfy
\begin{align*}
&\Phi_0(x)=x, \; \; \; \; \; \; \; \; \; \; \; x\in E^\Delta,\\
&\Phi_s \circ \Phi_t = \Phi_{s+t},  \; \; \; \; s,t \in \mathbb{R}.
\end{align*}
\end{definition}

It follows from the definition that each $\Phi_t$ is a homeomorphism on $E^\Delta$ and $\Phi_{-t}=\Phi_t^{-1}$. A flow is called continuous if it is continuous with respect to the product topology on $\mathbb{R}\times E^\Delta$. 

\begin{definition}
Given a flow $\Phi$ a family $ (k_t)_{t\in \mathbb{R}}$ is called a cocycle of $\Phi$ if
\begin{align*}\label{eq:cocycle}
&k_0=1\\
&k_{t+s}=k_t\cdot(k_s\circ \Phi_t), \; \; \; \; s,t\in \mathbb{R}.
\end{align*}
\end{definition}

A cocycle $(k_t)_{t\in \mathbb{R}}$ associated to a flow $\Phi$ is called continuous if the mapping $(t,x)\mapsto k_t(x)$ from $\mathbb{R}\times E^\Delta$ into $\mathbb{R}$ is continuous with respect to the product topology on $\mathbb{R}\times E^\Delta$. Let $\Phi$ be a flow and $( k_t)_{t\in \mathbb{R}}$ a cocycle of $\Phi$. Then, for every $t\in \mathbb{R}$, 
\begin{equation}\label{eq:cyclecocycle}
T_tf=k_t\cdot f \circ \Phi_t
\end{equation}
defines a bounded operator $T_t$ on $C_\Delta(E)$,  which satisfies the semigroup property $T_{s+t}=T_sT_t$ for all $s,t\in \mathbb{R}$. Moreover, by \cite[Proposition B-II.3.9]{arendt:86},  if $(T_t)_{t\in \mathbb{R}}$ is a strongly continuous group of positive operators on $C_\Delta(E)$, then there exist a continuous flow $\Phi$ on $E^\Delta$ and a continuous cocycle $(k_t)_{t\in \mathbb{R}}$ of $\Phi$ such that \eqref{eq:cyclecocycle} holds.
In \cite[Section B-II.3]{arendt:86} the  form of the cocycle associated with a positive group is explicitely derived and yields the following characterization, see \cite[Theorem B-II.3.14]{arendt:86}. Note that we work here with $C_\Delta(E)$ instead of $C_0(E)$.

\begin{theorem} \label{th:characterization}

An operator $A$ on $C_\Delta(E)$ is the generator of a positive group $(T_t)_{t\in \mathbb{R}}$ if and only if there exist a $C_\Delta(E)$-derivation which is the generator of a group, a function $h\in C_\Delta(E)$ and $p\in C_\Delta(E)$ satisfying $\inf_{x\in E^\Delta}p(x)>0$ such that
\begin{equation}
A=V\delta V^{-1}+h,
\end{equation}
where $V:C_\Delta(E)\rightarrow C_\Delta(E)$ is given by $Vf = p f$. In that case one has the following representation
\begin{equation}
T_t f (x) = \frac{p(x)}{p(\Phi_t(x))}(\exp\int_0^t h(\Phi(s,x))ds)f(\Phi_t(x)),
\end{equation}
for all $f\in C_\Delta(E)$, $t\in\mathbb{R}$ and $x\in E^\Delta$ and where $\Phi$ satisfies \eqref{eq:cyclecocycle}.
\end{theorem}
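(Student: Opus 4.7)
The plan is to handle the two implications separately; the forward (``if'') direction is essentially bounded perturbation theory on groups, while the converse is a structural statement about positive cocycles that reduces to a cohomological argument on the orbits of a flow.

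For the ``if'' direction, fix the derivation $\delta$ generating a group $(S_t)_{t\in\mathbb{R}}$, the function $h\in C_\Delta(E)$, and the strictly positive $p\in C_\Delta(E)$. Because $\inf_{x\in E^\Delta}p(x)>0$, multiplication by $p$ defines a bounded invertible operator $V$ on $C_\Delta(E)$, so the similar operator $V\delta V^{-1}$ again generates a group, namely $t\mapsto VS_tV^{-1}$. By \cite[Proposition B-II.3.9]{arendt:86} the derivation-group is of flow type $S_t f=f\circ\Phi_t$, hence $VS_tV^{-1}f(x)=\frac{p(x)}{p(\Phi_t(x))}f(\Phi_t(x))$. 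Adding the bounded multiplication operator $M_h$ perturbs the generator by a bounded operator and yields again a strongly continuous group; the explicit multiplicative form $\exp(\int_0^t h\circ\Phi_s\,ds)$ emerges because $M_h$ commutes with the pointwise structure along each orbit, so the Dyson--Phillips series collapses into an exponential. Positivity is then immediate, since $p>0$, the exponential is positive, and $f\mapsto f\circ\Phi_t$ preserves positivity.

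For the ``only if'' direction, start from a strongly continuous positive group $(T_t)_{t\in\mathbb{R}}$ with generator $A$. By \cite[Proposition B-II.3.9]{arendt:86} there exist a continuous flow $\Phi$ on $E^\Delta$ and a continuous strictly positive cocycle $(k_t)_{t\in\mathbb{R}}$ of $\Phi$ with $T_t f=k_t\cdot (f\circ\Phi_t)$. Set $r(x):=\partial_t k_t(x)\big|_{t=0}$, which exists and lies in $C_\Delta(E)$ because the constant function $\mathbf{1}\in D(A)$ and $T_t\mathbf{1}=k_t$. One then seeks $p\in C_\Delta(E)$ with $p\geq\varepsilon>0$ and $h\in C_\Delta(E)$ satisfying
\begin{equation*}
k_t(x)=\frac{p(x)}{p(\Phi_t(x))}\exp\!\int_0^t h(\Phi_s(x))\,ds,\qquad t\in\mathbb{R},\ x\in E^\Delta,
\end{equation*}
which, after taking logarithms, is the cohomological equation $\log k_t=u-u\circ\Phi_t+\int_0^t h\circ\Phi_s\,ds$ with $u=\log p$. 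Differentiating at $t=0$ yields the constraint $h=r+\delta u$, where $\delta$ is the derivation associated with the flow $\Phi$. Thus the freedom is the choice of $u\in C_\Delta(E)$; given such a $u$, one sets $p:=e^u$, $h:=r+\delta u$, and defines the candidate derivation $\delta:=V^{-1}(A-M_h)V$ with $Vf=pf$. One then verifies that this $\delta$ is a $C_\Delta(E)$-derivation (the Leibniz rule follows from the fact that $A-M_h$ annihilates constants after conjugation by $V$, which is precisely the compatibility encoded in the cocycle equation), has domain a subalgebra containing $\mathbf{1}$, and generates $f\mapsto f\circ\Phi_t$.

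The principal obstacle is the cohomological step: producing a \emph{globally defined, continuous, and strictly positive} $p$ on $E^\Delta$ so that the displayed identity holds for every $t$ and $x$ simultaneously, not just orbit by orbit. Along a single trajectory the equation $\log k_t=u-u\circ\Phi_t+\int_0^t h\circ\Phi_s\,ds$ can be solved freely by prescribing $u$ at one base point and propagating; the difficulty is to stitch these orbit-wise choices into a continuous function on all of $E^\Delta$, to handle fixed points of $\Phi$ where the equation degenerates, and to ensure compatibility at the added point $\Delta$ where $p(\Delta)$ must remain strictly positive and $k_t(\Delta)$ must admit such a splitting. This is the content of \cite[Theorem B-II.3.14]{arendt:86} and exploits both the compactness of $E^\Delta$ and strong continuity of the group; any complete proof must either reproduce or directly invoke that structural result.
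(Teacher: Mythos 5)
The paper does not actually prove this statement: it is recalled verbatim (modulo replacing $C_0(E)$ or $C(K)$ by $C_\Delta(E)$) from \cite[Theorem B-II.3.14]{arendt:86}, and the surrounding text in Appendix~\ref{app:C} only assembles the prerequisite notions (flows, cocycles, and \cite[Proposition B-II.3.9]{arendt:86}). Your outline is therefore not in conflict with anything the paper does; it is a plausible reconstruction of the architecture of the cited proof, and you correctly and honestly locate the genuine difficulty --- solving the cohomological equation $k_t=(p/(p\circ\Phi_t))\exp\int_0^t h\circ\Phi_s\,ds$ for a globally continuous, strictly positive $p$, including at fixed points of the flow and at $\Delta$ --- in exactly the structural result you would have to invoke. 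Since you end by deferring that step to \cite[Theorem B-II.3.14]{arendt:86}, your argument has the status of a commentary on the citation rather than an independent proof, which is the same status the paper itself gives the theorem.

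Two points in your sketch would need tightening to make it self-contained. First, in the converse direction you define $r=\partial_t k_t|_{t=0}$ and justify its existence by claiming $\mathbf{1}\in D(A)$; but membership of the constant function in $D(A)$ is not among the hypotheses of the theorem (the paper's standing assumption $1\in D$ concerns the coefficient space $D$, not the domain of an arbitrary group generator), so the differentiability of $t\mapsto k_t=T_t\mathbf{1}$ has to be extracted from the structure theory in \cite[Section B-II.3]{arendt:86} rather than asserted. Second, in the forward direction the assertion that the Dyson--Phillips series ``collapses into an exponential'' because $M_h$ ``commutes with the pointwise structure along each orbit'' is not a proof: $M_h$ does not commute with composition by $\Phi_t$ unless $h$ is constant along orbits. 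The clean route is to verify directly that $U_tf:=\exp\big(\int_0^t h\circ\Phi_s\,ds\big)\,(f\circ\Phi_t)$ satisfies the group law (the cocycle identity for the exponential factor is immediate from the flow property) and that its generator is $\delta_\Phi+M_h$, and then conjugate by $V$; positivity is indeed immediate as you say.
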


\subsection{Case $E\subseteq \mathbb{R}$} 

Fix $E\subseteq \R$ such that  $E^{\Delta}=[a,b]\subseteq \mathbb{R}^\Delta$, for some $- \infty \leq a< b \leq \infty$, where we identify $E^{\Delta}$ with $E$, when  $a > -\infty$ and $b < \infty$. In this case the form of the derivation can be made more explicit.
Indeed, let $\tau:(a,b)\rightarrow \mathbb{R}$ be a continuous function and define $\delta_{\tau}$ by 
\begin{equation}\label{eq:fshape}
\delta_{\tau}g := \begin{cases}
\tau(x)g'(x) & \mbox{if} 	\; \;  \tau(x)\neq 0,\\
0 & \mbox{if} 	\; \;  \tau(x)= 0,
\end{cases}
\end{equation}
for all $x\in (a,b)$ and with domain 
\begin{equation}
\begin{split}\label{eq:derivationdomain}
D(\delta_{\tau})=\{ &g\in C_\Delta(E): g\mbox{ is differentiable in }x\in (a,b)\mbox{ whenever }\tau(x)\neq 0\\
&\mbox{ and there exists a (necessarily unique) }f\in C_\Delta(E) \text{ such that }  \delta_{\tau}g=f \}.
\end{split}
\end{equation}
Note that $\delta_{\tau}$ is a derivation and that the above definition corresponds (up to the modification from  $C[a,b]$ to $C_{\Delta}(E)$) to  \cite[Eq.~(3.25)]{arendt:86} with $m=\tau$ and $\tilde{\delta}_m=\delta_{\tau}$.

For the characterization of generators of positive groups 
on $C_\Delta(E)$ for $E \subseteq \mathbb{R}$  we need two further notions, namely so-called \emph{admissibility} (see \cite[Definition~3.16]{arendt:86}) of the function $\tau$ and the notion of a \emph{lattice isomorphism}.

\begin{definition}\label{def:admissible}
 A function $\tau:(a,b)\rightarrow \mathbb{R}$ is admissible if it is continuous and the following holds: whenever $a\leq c<d\leq b$ such that $\tau(x)\neq 0$ for $x\in (c,d)$ and $\tau(c)=0$ or $c=a=-\infty$ and $\tau(d)=0$ or $d=b=\infty$, then $\int_c^z \frac{1}{|\tau(x)|}dx=\int_z^d\frac{1}{|\tau(x)|}dx=\infty$ for $z\in (c,d)$. Moreover, if  if $a > - \infty $, then  $m(a) =0$ and for $b < \infty$, $m(b)=0$.
\end{definition}

\begin{remark} \label{rem:admissible}
Note that every Lipschitz continuous function (referring to globally Lipschitz in the case of unbounded intervals) is admissible. Note also that $\tau$ only needs to be continuous on the open interval $(a,b)$.
\end{remark}

\begin{definition}\label{def:iso}
A lattice isomorphism is a one-to-one map $V:C_\Delta(E)\rightarrow C_\Delta(E)$ such that $|Vg|=V|g|$ for all $g\in C_\Delta(E)$.
\end{definition} 
 
The following theorem is a refinement of Theorem \ref{th:characterization} when $E^{\Delta}=[a,b]$ and follows from \cite[Theorem B-II.3.28]{arendt:86}. We only adapt it to the current $C_{\Delta}(E)$-setting instead of the $C([a,b])$ setting considered in \cite[Theorem B-II.3.28]{arendt:86}. Note that this only differs when $a=-\infty$ and $b=\infty$.

\begin{theorem}\label{th:posgroupgen} An operator $A$ generates a positive group on $C_\Delta(E)$ if and only if there exist
 a lattice isomorphism $V$ on $C_\Delta(E)$,
 an admissible function $\tau: (a,b)\rightarrow \mathbb{R}$,
 and $h \in C_{\Delta}(E)$ such that
$
A=V^{-1}\delta_{\tau} V + h.
$
\end{theorem}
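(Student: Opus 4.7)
The plan is to reduce to Theorem \ref{th:characterization} and then classify the $C_\Delta(E)$-derivations that generate a group when $E \subseteq \mathbb{R}$. The result is essentially a transcription of \cite[Theorem B-II.3.28]{arendt:86} to the one-point compactification setting used here, so my proof proposal is organized around (i) invoking the general characterization, (ii) identifying the derivation, and (iii) reconciling the compactifications.

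First, for the ``only if'' direction, I would assume $A$ generates a positive group and apply Theorem \ref{th:characterization}. This gives $A = V_0 \delta V_0^{-1} + h$ where $V_0 g = p g$ for a strictly positive $p \in C_\Delta(E)$ bounded away from $0$ (in particular a lattice isomorphism), $h \in C_\Delta(E)$, and $\delta$ is a $C_\Delta(E)$-derivation generating a group. The key remaining step is to show that any such derivation on $C_\Delta(E)$ with $E \subseteq \mathbb{R}$ must be of the form $\delta_\tau$ for an admissible $\tau$. The Leibniz rule together with $\delta 1 = 0$ force $\delta$ to be a local operator, and a Peetre-style argument restricted to the interior $(a,b)$ identifies it with $g \mapsto \tau g'$ for some continuous $\tau$. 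Admissibility is then forced because the induced operators $T_t g = g \circ \Phi_t$, where $\Phi_t$ is the flow of $\dot x = \tau(x)$, must form a \emph{group} on $C_\Delta(E)$: failure of the non-integrability conditions at zeros of $\tau$ or at infinite endpoints would allow trajectories to reach those points in finite time, breaking either the global existence of $\Phi_t$ for $t \in \mathbb{R}$ or the invariance of $C_\Delta(E)$ under composition with $\Phi_t$.

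For the ``if'' direction, given admissible $\tau$, the ODE $\dot x = \tau(x)$ admits a global flow on $E^\Delta$, so $T_t g := g \circ \Phi_t$ is a strongly continuous positive group on $C_\Delta(E)$ with generator $\delta_\tau$. Conjugation by the lattice isomorphism $V$ preserves both the group property and positivity, and the bounded multiplication perturbation by $h$ is handled by the Dyson--Phillips (bounded perturbation) theorem together with the fact that $e^{th}$ is a positive operator; the composition $V^{-1}\delta_\tau V + h$ then generates a positive group, which is the required form.

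The main technical obstacle, and essentially the only genuine departure from \cite[Theorem B-II.3.28]{arendt:86}, concerns the case $a = -\infty$ and $b = \infty$. Arendt's result is stated for $C([a,b])$ with a two-point compactification of the interior, whereas here $E^\Delta$ is the one-point compactification of $\mathbb{R}$ in which $+\infty$ and $-\infty$ are identified. The admissibility conditions at infinite endpoints (non-integrability of $1/|\tau|$ near $\pm\infty$ when $\tau$ does not vanish there) must be reinterpreted as the statement that $\Phi_t$ fixes the single point at infinity of $E^\Delta$, which is precisely what is needed for $g \circ \Phi_t$ to remain in $C_\Delta(E)$. Once this descent to the one-point compactification is verified, Arendt's arguments transfer essentially verbatim and the proof concludes.
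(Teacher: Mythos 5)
Your overall strategy coincides with the paper's: the paper offers no argument beyond citing \cite[Theorem B-II.3.28]{arendt:86} and remarking that the passage from $C([a,b])$ to $C_\Delta(E)$ only matters when $a=-\infty$ and $b=\infty$, and your proposal is a fleshed-out version of exactly that reduction (invoke Theorem \ref{th:characterization}, classify the derivations on an interval that generate groups, reconcile the compactifications). The ``if'' direction and the ``only if'' direction in the presence of at least one finite endpoint are fine.

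There is, however, a genuine gap in your ``only if'' argument at precisely the point you yourself single out as the main obstacle. You claim that admissibility is forced because a trajectory of $\dot x=\tau(x)$ reaching a zero of $\tau$ or an infinite endpoint in finite time would break either the global existence of $\Phi_t$ or the invariance of $C_\Delta(E)$ under $g\mapsto g\circ\Phi_t$. When $a=-\infty$ and $b=+\infty$ this is false: both ends of $\mathbb{R}$ are glued to the single point $\Delta$, so a trajectory escaping to $+\infty$ in finite time may continue from $-\infty$, producing a perfectly good continuous flow on $E^\Delta\cong S^1$ and hence a strongly continuous positive group on $C_\Delta(\mathbb{R})=C(E^\Delta)$. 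Concretely, $\tau(x)=1+x^2$ fails the non-integrability condition of Definition \ref{def:admissible}, yet $\Phi_t(x)=\tan(\arctan(x)+t)$ extends to a continuous flow on $\mathbb{R}^\Delta$ and $T_tg=g\circ\Phi_t$ is a positive group whose generator acts as $g\mapsto(1+x^2)g'$ on smooth functions. Your proposed repair --- reinterpreting admissibility at infinity as the statement that $\Phi_t$ fixes $\Delta$ --- is exactly the assertion requiring proof, and the example shows it is not automatic: every admissible $\tau$ yields a flow fixing $\Delta$, conjugation by a lattice isomorphism and addition of $h$ change the underlying flow only by a homeomorphism and a cocycle, so any operator of the form $V^{-1}\delta_\tau V+h$ has an underlying flow with a fixed point, whereas the rotation flow has none. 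Hence the descent from Arendt's two-point compactification to the one-point compactification is not ``essentially verbatim'': one must either enlarge the admissible class for $E=\mathbb{R}$ or supply an argument that the flow fixes $\Delta$, and your proposal does neither.
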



\end{document}